\def\eqref#1{(\ref{#1})}
\newlength{\alignedproofindent}
\NewDocumentEnvironment{alignedproof}{O{\proofname}m}
 {%
  \proof[#1]\mbox{}\par\nopagebreak
  \settowidth{\alignedproofindent}{#2\quad}%
  \everypar{\hangindent=\alignedproofindent\hangafter=0 }%
 }
 {\endproof}
\NewDocumentCommand{\alignedproofstep}{m}{%
  \par\noindent\makebox[0pt][r]{%
    \makebox[\alignedproofindent][l]{#1}%
  }\ignorespaces
}
\newcommand{\gives}{${{}\Rightarrow{}}$}
\theoremstyle{plain}
\newtheorem{theorem}{Theorem}[section]
\newtheorem{lemma}[theorem]{Lemma}
\newtheorem{proposition}[theorem]{Proposition}
\newtheorem{corollary}[theorem]{Corollary}
\theoremstyle{definition}
\newtheorem{definition}[theorem]{Definition}
\newtheorem{question}[theorem]{Question}
\newtheorem{remark}{Remark}[section]
\newcommand{\R}{\mathbb{R}} 
\newcommand{\C}{\mathbb{C}} 
\begin{document}

\title{On the structure of compact strong HKT manifolds}

\author{Beatrice Brienza}
\address[Beatrice Brienza]{Dipartimento di Matematica ``G. Peano'', Universit\`{a} degli studi di Torino \\
Via Carlo Alberto 10\\
10123 Torino, Italy}
\email{beatrice.brienza@unito.it}

\author{Anna Fino}
\address[Anna Fino]{Dipartimento di Matematica ``G. Peano'', Universit\`{a} degli studi di Torino \\
Via Carlo Alberto 10\\
10123 Torino, Italy\\
\& Department of Mathematics and Statistics, Florida International University\\
Miami, FL 33199, United States}
\email{annamaria.fino@unito.it, afino@fiu.edu}

\author{Gueo Grantcharov}
\address[Gueo Grantcharov]{Department of Mathematics and Statistics \\
Florida International University\\
Miami, FL 33199, United States}
\email{grantchg@fiu.edu}

\author{Misha Verbitsky}
\address[Misha Verbitsky]{Instituto Nacional de Matem\'atica Pura e Aplicada (IMPA)\\
Estrada Dona Castorina, 110\\
Jardim Bot\^anico, CEP 22460-320 \\
Rio de Janeiro, RJ - Brasil}
\email{verbit@impa.br}

\keywords{hypercomplex, HKT, holonomy,  Bismut connection, Obata connection, foliation}

\subjclass[2020]{53C26,  53C56}

\maketitle

\begin{abstract} We study the geometry of compact strong HKT and, more generally, compact BHE manifolds. We prove that any compact BHE manifold with full holonomy must be Kähler and we establish a similar result for strong HKT manifolds. Additionally, we demonstrate a rigidity theorem for strong HKT structures on solvmanifolds and we completely classify those with parallel Bismut torsion. Finally, we introduce the Ricci foliation for hypercomplex manifolds and analyze its properties for compact, simply connected, 8-dimensional strong HKT manifolds, proving that they are always Hopf fibrations over a compact $4$-dimensional orbifold. 
\end{abstract}

\section{introduction}
K\"ahler geometry lies in the intersection of three fundamental geometries: complex, symplectic, and Riemannian. It also naturally appeared in the early days of String Theory \cite{CZ-84,HS-84,GHR-84,BCZ-85,St, Hull, HP-88, HP-96,  GuP2, GP-04, IP, Gr-Poon,GGP, FIUV}. In particular, given a Hermitian manifold $(M,J,g)$, namely a complex manifold endowed with a fixed Riemannian metric $g$ satisfying $g(J\cdot, J\cdot)=g(\cdot,\cdot),$ we say that the Hermitian manifold $(M,J,g)$ is \emph{K\"ahler} if the associated fundamental form $\omega=g \circ J$ is closed, i.e., $d\omega=0$. \par
From the complex point of view, if a Hermitian structure is K\"ahler there exists a further compatibility between the Riemannian and the complex geometries, as the Levi--Civita connection preserves the complex structure $J$, namely $\nabla^{LC} J=0,$ which is not true when $d\omega \neq 0$. Therefore, in the non-K\"ahler setting the Levi--Civita connection is not sufficient to encode the Hermitian geometry of the manifold, making it necessary to seek alternative connections that preserve both $g$ and $J$. These connections are known as \emph{Hermitian connections}. See Section 1 for further details.
The Bismut  (or Strominger) connection \cite{Bi,St} has been introduced as the unique Hermitian connection with totally skew symmetric torsion and its expression is given by
\[
g( \nabla^{B}_X Y, Z) =  g(\nabla^{LC}_X Y, Z) -\frac{1}{2}d^c \omega(X,Y, Z),
\]
for any vector fields $X,Y, Z$. \par
The $3$-form  $H \coloneqq -d^c \omega$ is often called the \emph{Bismut torsion}. The Bismut connection was first introduced in the physics context in the Strominger foundational paper \cite{St}. In fact, it naturally arises in string theory, particularly in heterotic supergravity, where it plays a fundamental role in anomaly cancelation and flux compactifications. \par
When the Bismut torsion is closed, namely when $dd^c\omega=0,$ the Hermitian structure $(J,g)$ is said to be \emph{strong K\"ahler with torsion} (SKT) or \emph{pluriclosed}. \par

As remarked above, the Bismut connection is Hermitian, namely $\nabla^B J= \nabla^B g=0$. As a result, its holonomy is contained in the unitary group $\operatorname{U}(n)$, where $n$  denotes the complex dimension of $M$. When the restricted holonomy of $\nabla^B$ is in the special unitary group $\operatorname{SU}(n)$, the Hermitian structure $(J,g)$ is said \emph{Calabi Yau with torsion} (CYT, in short). Moreover, even in the K\"ahler case, CYT manifolds are not Calabi-Yau in the classical sense. For example, the Ricci flat K\"ahler metric on Enriques surface has restrictel holonomy in $SU(2)$, but it is not Calabi-Yau.  Equivalently, we say that an Hermitian structure $(J, g)$ is CYT if 
\begin{equation} \label{eqn:rhob}
\rho^B(X,Y)=\frac{1}{2} \sum_{i=1}^{2n} R^B(X,Y,Je_i,e_i) =0,  \ \forall X,Y \in \Gamma(\operatorname{TM}),
\end{equation}
where $R^B$ is the Bismut curvature tensor and $\{e_i\}$ is a local orthonormal frame. \par
Since $\rho^B$ is a representative of the first Chern class, $c_1(M)$ represents an obstruction to the existence of CYT metrics. In other words, the condition $\rho^B=0$ implies that the first Chern class must vanish. Despite this topological restriction, CYT metrics are not difficult to construct. For instance, in complex dimension $3$, the compact manifolds $(k-1)(S^2 \times S^4)\sharp k(S^3 \times S^3)$ admit CYT metrics for any integer $k \ge 1$ \cite{GGP}. The existence of CYT metrics has also been investigated on $C$-spaces \cite{GG}. \\
A CYT structure $(J,g)$ is called \emph{Bismut-Hermitian Einstein} (BHE) if it is also SKT. Inspired by the Calabi-Yau Theorem \cite{CAL,CAL2,YAU,YAU2},  a natural initial question is whether the condition $c_1(M) = 0$ guarantees the existence of a Bismut Hermitian-Einstein metric. However, this is not the case, as demonstrated in \cite{GFJS}. Therefore, unlike the K\"ahler Ricci flat case, having vanishing first Chern class is not enough to ensure the existence of BHE metrics. Actually, BHE metrics are rare. The few explicit examples we have suggest that, a rigidity result may be proved implying that these metrics are highly constrained, if non K\"ahler. \par
Since any BHE manifold is CYT, the restricted holonomy of the Bismut connection is contained within the special unitary group $\operatorname{SU}(n)$.
Compact Bismut flat and K\"ahler Ricci flat manifolds are the building blocks of any known example of compact BHE manifolds (see Section 1 for further details), and it has been shown in \cite{BFG, BPT} that a compact BHE manifold has parallel Bismut torsion if, and only if, it admits a finite cover which splits as a product of a compact K\"ahler Ricci flat manifold and a compact Bismut flat one. \par
In \cite{BFG, W}, the authors asked the following challenging question
\begin{question} \label{qst:3}
Is it true that in the compact case the BHE condition forces the parallelism of the Bismut torsion?
\end{question}
In complex dimension $2$ this is true by \cite{GI}.  Compact BHE threefolds have been studied in \cite{ALS}. However, compact examples of BHE threefolds with non parallel Bismut torsion have been constructed recently in \cite{ALL-26}. 

BHE metrics naturally arise in hyper-Hermitian geometry. A \emph{hyper-Hermitian} structure on a $4n$-dimensional manifold is a quadruple $(I,J,K,g)$,  where $I,J,K$ are complex structures satisfying the imaginary quaternionic relations and $g$ is a Riemannian metric compatible with each of them. For each Hermitian structure we denote the corresponding Bismut connection by $\nabla^B_L$, $L=I,J,K$. When the three Bismut connections coincide, i.e., $\nabla^B_I=\nabla^B_J=\nabla^B_K=:\nabla^B$, or equivalently, when the three Hermitian structures share the same Bismut torsion $H$, the hyper-Hermitian structure $(I,J,K,g)$ is called \emph{hyper-K\"ahler with torsion} (HKT, in short).  If, in addition, the torsion 3-form is closed, the structure $(I, J, K, g)$  is called \emph{strong HKT}.
For further details about HKT geometry we refer to \cite{HP, HP-96, GPS,  Gr-Poon, Agr, IP2,OPS, P-25}. \par
In the HKT case, due to the symmetries of the Bismut connection, the holonomy of the Bismut connection lies in the symplectic group $\operatorname{Sp}(n)$, which is a subgroup of $\operatorname{SU}(2n)$. This implies that whenever the structure is strong HKT each Hermitian structure $(L,g)$ is indeed BHE, with $L=I,J,K$. However, very little is currently known about strong HKT manifolds.\par
(Strong) HKT structures  first emerged in string theory as the geometric structures on the target space of a (0,4)-supersymmetric sigma model with a Wess-Zumino term (see \cite{HP, GPS}). They also appear in superconformal quantum mechanics \cite{MiS} and in the geometry of black hole moduli spaces \cite{GuP}.  The existence of strong HKT structures has been investigated in the locally homogeneous setting. In particular, combining the results of \cite{BDV,DF},  a nilmanifold admits an invariant strong HKT structure if, and only if, it is hyper-K\"ahler, and an analogous result for almost abelian solvmanifolds was proved in \cite{AB}. These results prompted the following question \cite{BF}:
\begin{question}  \label{qst:2}
Is it true that the existence of an invariant strong HKT structure on a solvmanifold implies that the manifold is hyper-K\"ahler?
\end{question}
In Section 2, we address Questions \ref{qst:3} and \ref{qst:2}. Specifically, we prove that any compact BHE manifold with full holonomy of the Bismut connection must be Kähler. The key idea is that in the non-Kähler case, the restricted holonomy of the Bismut connection reduces further to $\operatorname{SU}(n-1)$. In fact, every compact BHE manifold admits a unique (up to a constant) smooth function $f$ called the \emph{potential function}, such that the vector field $V=\theta^\sharp-\operatorname{grad} f$ is Bismut-parallel and never vanishes \cite{SU, Le2, ALS,GFJS}.  As elsewhere, $\theta^\sharp$ is the vector field associated to the Lee form $\theta$ via $g$. This result extends to compact strong HKT manifolds by leveraging the fact that each Hermitian structure induced by the strong HKT one is BHE and that their Lee forms coincide. In particular, we show that the vector fields $V_L=\theta^\sharp-\operatorname{grad} f_L$ associated with the three Hermitian structures are identical, further reducing the holonomy of the Bismut connection from $\operatorname{Sp}(n)$ to $\operatorname{Sp}(n-1)$. Finally, we prove that strong HKT solvmanifolds are hyper-K\"ahler by establishing that the Lee form is parallel. i.e. the potential $f$ is constant, and using general curvature properties of left-invariant metrics on solvable Lie algebras.\par
In Section \ref{section3}, we fully characterize strong HKT manifolds with parallel Bismut torsion, extending the results of \cite{BFG, BPT} to the hyper-Hermitian setting. In particular, we establish a Beauville-Bogomolov-type decomposition, proving that any compact strong HKT manifold with parallel Bismut torsion is, up to a finite cover, the product of a compact hyper-Kähler manifold and a Bismut-flat one. \par
In Section \ref{section4}, we introduce the Ricci foliation of a hyper-complex manifold as the kernel of the Obata Ricci curvature and in Section \ref{section5}, we analyze this foliation in depth on compact, simply connected strong HKT $8$-manifolds. We prove that in this case, the foliation admits an integrable sub-distribution isomorphic to $\mathfrak{su}(2)\oplus \mathfrak{u}(1)$, corresponding to an isometric action of $\mathbb{H}^*$. In particular, we show that the potential function $f$ must be constant, leading to a highly rigid geometric structure. The HKT geometry admitting such $\mathbb{H}^*$-action is considered in \cite{Poon-Swann} and recently in \cite{Papadopoulos, Pap-Witten, W}. \par
Using the results in Section \ref{section5} about the Ricci foliation, in Section \ref{section6} we establish an existence of a foliation with compact leaves, induced by an isometric $S^1 \times SU(2)$-action. This result allow us to prove a structure theorem for compact simply connected strong HKT manifolds of dimension 8. \par
In Section \ref{section7} we fully characterize the conditions under which the Bismut torsion is parallel, which, by the splitting result proved in Section \ref{section3}, implies that the manifold must be  the Lie group $SU(3)$, while in the last section we discuss some relations with the HKT potential.
\\ \ \\
\textbf{Acknowledgments:} The authors would like to  thank  Marco Radeschi and Jeﬀrey Streets for useful discussions. The authors are  also grateful to the referees for the valuable comments and suggestions  which improved a lot  the readability of the paper.  Beatrice Brienza and Anna Fino are partially supported by Project PRIN 2022 \lq \lq Real and complex manifolds: Geometry and Holomorphic Dynamics”, by GNSAGA (Indam). Beatrice Brienza is partially supported by the INdAM - GNSAGA Project  CUP E53C24001950001. Anna Fino is also supported by  a grant from the Simons Foundation (\#944448).  Gueo Grantcharov is partially supported by a grant from
the Simons Foundation (\#853269). Misha Verbitsky is partially supported by FAPERJ SEI-260003/000410/2023 and CNPq - Process 310952/2021-2.

\section{Some remarks on the Bismut connection of strong HKT manifolds} \label{section2}
Let  $M^n$ be a smooth $n$-dimensional manifold and let $\nabla$ be a linear connection. Fix any piece-wise smooth loop $\gamma$ based at $p \in M$. The connection $\nabla$ induces the parallel transport map $P_\gamma: \operatorname{T_p M} \to \operatorname{T_p M}$, which is linear and invertible. The holonomy group at $p \in M$ is defined as
\[
\operatorname{Hol}_p(\nabla):=\{P_\gamma \in \operatorname{GL}(\operatorname{T_p M}) \ | \ \gamma \ \text{is a loop based at} \ p \}.
\]
For loops $\gamma$ and $\delta$ based at $p$, we denote by $\gamma \star \delta$ their composition. $\operatorname{Hol}_p(\nabla)$ has a natural structure of a group, i.e., $P_\gamma \circ P_\delta =P_{\gamma \star \delta}$, and it is a subgroup of $\operatorname{GL}(\operatorname{T_p M})$, which is identified with $\operatorname{GL}(n,\R)$ once a basis is fixed. \par
If we require $M$ to be connected, as we will do in what follows, then the holonomy group $\operatorname{Hol}_p(\nabla)$ is independent of the base point $p$. Because of this, the subscript $p$ may be omitted. \par
$\operatorname{Hol}(\nabla)$ needs not to be connected and its identity component $\operatorname{Hol}^{0}_p(\nabla)$ is called the \emph{restricted} holonomy group: it coincides with the parallel transport maps $P_\gamma$ such that $\gamma$ is null-homotopic. Clearly, when $M$ is simply connected, $\operatorname{Hol}(\nabla)=\operatorname{Hol}^0(\nabla)$. \par
Let  $(M,J,g)$ be a Hermitian manifold of complex dimension $n$ and let $\nabla$ be a \emph{Hermitian} connection, i.e., $\nabla$ is a connection on $\operatorname{TM}$ such that $\nabla J=0$ and $\nabla g=0$. Since both the complex structure and the Riemannian metric are parallel, the parallel transport map $P_\gamma$ is an isometry of $(\operatorname{T_pM},g_p)$, for every point $p \in M$,  and satisfies $P_\gamma J=J P_\gamma$. Therefore,
\[
\operatorname{Hol}(\nabla) \subseteq \operatorname{O}(n) \cap \operatorname{GL}(n,\C)=\operatorname{U}(n).
\]
In \cite{Ga}, an affine line of Hermitian connections on the tangent bundle $\operatorname{TM}$ is introduced. They are known as \emph{Gauduchon} or \emph{canonical} connections and the expression of the line is given by
\begin{equation} \label{eqn:canonical}
g(\nabla^t_XY,Z)=g(\nabla^{LC}_XY,Z)+\frac{t-1}{4}(d^c\omega)(X,Y,Z)+\frac{t+1}{4}(d^c\omega)(X,JY,JZ),
\end{equation}
where $d^c \omega=Jd\omega$. We will adopt the convention $Jd\omega(X,Y,Z):=d\omega(-JX,-JY,-JZ)$. \par
When $(M,J,g)$ is K\"ahler, $d^c\omega$ vanishes, and so that the line collapses to a single point, namely, the Levi-Civita connection. Nonetheless, when $(M,J,g)$ is not K\"ahler, the line is not trivial and the connections $\nabla^t$ have non vanishing torsion. \par
For particular values of $t \in \R$, the \emph{Chern} and the \emph{Bismut} connections are recovered, i.e., $\nabla^{1}= \nabla^{Ch}$ and $\nabla^{-1}= \nabla^B$ \cite{Bi, Ch}. Despite $\nabla^{LC},  \nabla^{B}$, and $\nabla^{Ch}$ being pairwise distinct connections, any two of them completely determines the third.  \par
In this work, we will focus mainly on the Bismut connection $\nabla^B$, also known as the \emph{Strominger} connection \cite{Bi, St}.  The Bismut connection can be characterized as the only Hermitian connection with totally skew-symmetric torsion, and it follows from \eqref{eqn:canonical} that its expression is given by
\begin{equation*} 
g(\nabla^B_XY,Z)=g(\nabla^{LC}_XY,Z)-\frac{1}{2}d^c\omega(X,Y,Z),
\end{equation*}
with its torsion $3$-form $H$ 
\[
H(X, Y,  Z) = g(T^B (X, Y), Z) = d \omega (JX, JY, JZ) = - d^c \omega (X, Y, Z), \quad  X, Y, Z \in \Gamma(\operatorname{TM}).
\]
When $dH=0$, then the Hermitian structure $(J,g)$ is said \emph{strong K\"ahler with torsion} (SKT, in short) or \emph{pluriclosed}.
\begin{definition}
A Hermitian manifold $(M,J,g)$ is said \emph{Calabi-Yau with torsion} (CYT, in short) if the restricted holonomy group of the Bismut connection is in $\operatorname{SU}(n)$ or, alternatively, if the Bismut Ricci curvature $\rho^B$ (see \eqref{eqn:rhob}) vanishes identically.
\end{definition}
We introduce now some notations: given a Hermitian structure $(J,g)$ the associated Lee form is expressed as  $\theta=J \delta\omega$. Equivalently, $\theta$ is characterized by the relation $d\omega^{n-1}=\theta \wedge \omega^{n-1}$. Furthermore, $\theta(X)= \frac{1}{2}\sum H(e_i,Je_i, JX)$ (see, for instance, \cite[Lemma 8.6]{GFS}). We will denote by $\theta^\sharp$ the vector field dual to $\theta$ via $g$, i.e., $\theta(Y)=g(\theta^\sharp, Y)$, for any $Y \in \Gamma(\operatorname{TM})$. \par
In the complex case, many traces of the Bismut curvature tensor are possible. The Bismut Ricci tensor is defined as
\[ Ric^B(X,Y)=\sum_i R^B (e_i, X,Y,e_i),\] 
and on a SKT manifold, it is related with $\rho^B$ by 
\begin{equation} \label{eqn:rhob2}
\rho^B(X,Y)=-Ric^B(X,JY)-(\nabla^B_X \theta) JY
\end{equation}
(see \cite{IP} or \cite[Lemma 8.9]{GFS}). When $\nabla^B \theta=0$, the CYT condition is equivalent to have $Ric^B=0$. \par

We recall the following definition 
\begin{definition}
A SKT manifold $(M,J,g)$ is called
\begin{enumerate}
\item \emph{Bismut Hermitian Einstein} (BHE, for short), if it is CYT, 
\item \emph{steady pluriclosed soliton} if there exists a smooth function $f$ such that 
\[
Ric^{LC}-\frac{1}{4} H^2+ \nabla^2 f =0, \quad  \delta H +\iota_{\operatorname{grad} f} H =0,
\]
where $H^2$ is defined as $H^2(X,Y)=g(\iota_X H, \iota_Y H)$ and $\delta=d^*$ is the codifferential of $g$.
\item \emph{generalized Einstein} if $Ric^B=0$, namely, $Ric^{LC}=\frac{1}{4} H^2, \  \delta H =0$.
\end{enumerate}
\end{definition}

BHE metrics arise as stationary points of the pluriclosed flow \cite{ST}, which has motivated significant interest in the construction of compact examples.

Since any BHE manifold  is in particular CYT, the restricted holonomy of the Bismut connection is contained in $\operatorname{SU}(n)$. Another open question regards the existence of (non-K\"ahler) examples with restricted Bismut holonomy exactly $\operatorname{SU}(n)$, where $n$ is the complex dimension of the manifold. In this section we answer negatively to this question.\par
In what follows, we will need the following Proposition 
\begin{proposition} \cite{GFJS, SU, Le2, ALS} \label{prop:BHE}
Any compact BHE manifold $(M,J,g)$ is a steady pluriclosed soliton with a unique normalized potential $f$. The vector fields $V:=\frac{1}{2} (\theta^\sharp- \operatorname{grad} f)$ and $JV$ are holomorphic Killing and Bismut parallel and they vanish if and only if $(J,g)$ is K\"ahler. Moreover, $dV^\flat=\frac{1}{2}d\theta$ and $dJV^\flat=\frac{1}{2}(dJ\theta-dJdf)$ are of type $(1,1)$, and, furthermore, they satisfy $\iota_V dV^\flat=\iota_{JV}  dV^\flat=0$ and $\iota_V dJV^\flat=\iota_{JV}  dJV^\flat=0$. Finally, $\iota_V df=\iota_{JV}  df=0$.
\end{proposition}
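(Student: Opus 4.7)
The strategy is to assemble results from \cite{GFJS, SU, Le2, ALS} into a single structural statement. First, starting from the identity \eqref{eqn:rhob2}, the CYT condition $\rho^B = 0$ forces $Ric^B(X, JY) = -(\nabla^B_X \theta)(JY)$, and combined with the SKT hypothesis this allows one to invoke the pluriclosed-flow machinery of Streets--Ustinovskiy and Garcia-Fern\'andez--Jordan--Streets to produce a smooth function $f$ solving the steady soliton equations in the definition above. Uniqueness of $f$, up to the standard normalization, follows from the strong maximum principle applied to the drift-Laplacian on the compact manifold $M$.

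With $f$ in hand, I would set $V \coloneqq \tfrac{1}{2}(\theta^\sharp - \operatorname{grad} f)$ and compute $\nabla^B V$ by writing $\nabla^B = \nabla^{LC} + \tfrac{1}{2} H$. The two soliton identities $Ric^{LC} - \tfrac{1}{4} H^2 + \nabla^2 f = 0$ and $\delta H + \iota_{\operatorname{grad} f} H = 0$ combine, after re-expressing $Ric^{LC}$ through $Ric^B$ and using the known relation between the Bismut and Levi--Civita Hessians, to give $\nabla^B V = 0$. Since $\nabla^B$ preserves $J$, the field $JV$ is Bismut-parallel as well. Because $\nabla^B$ is Hermitian with totally skew-symmetric torsion, any Bismut-parallel vector field is simultaneously holomorphic (directly from $\nabla^B J = 0$) and Killing for $g$ (by writing the Lie derivative $L_V g$ in terms of $\nabla^B$ and using the skew-symmetry of $H$).

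For the dichotomy, Bismut-parallelism makes $|V|^2$ constant, so either $V \equiv 0$ or $V$ is nowhere zero. If $V \equiv 0$ then $\theta = df$ is exact; inserting this into the soliton equations and integrating by parts on the compact manifold forces $H = 0$, hence $d\omega = -JH = 0$, so $(J,g)$ is K\"ahler; the converse is immediate since in the K\"ahler case $H = 0$, $\theta = 0$, and $f$ is constant. For the $2$-forms, $V^\flat = \tfrac{1}{2}(\theta - df)$ gives $dV^\flat = \tfrac{1}{2} d\theta$, and analogously one obtains the stated formula for $dJV^\flat$. The $(1,1)$-type of both $dV^\flat$ and $dJV^\flat$ follows from $V$ and $JV$ being simultaneously holomorphic and Killing, via the standard lemma that $dX^\flat$ commutes with $J$ for any such $X$. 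The contraction identities come from Cartan's formula $L_V = \iota_V d + d \iota_V$ applied to $V^\flat$ and $JV^\flat$, combined with $L_V V^\flat = L_V JV^\flat = 0$ and the constancy of $\iota_V V^\flat = |V|^2$ and $\iota_V JV^\flat = -g(V, JV) = 0$; the same argument with $JV$ in place of $V$ handles the remaining cases. Finally, $\iota_V df = V(f)$ and $\iota_{JV} df = (JV)(f)$ both vanish because the flows of $V$ and $JV$ preserve the full BHE structure and hence the normalized potential, which by uniqueness must be flow-invariant.

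The main obstacle is the very first step: producing the potential $f$. This is not a purely algebraic consequence of the BHE hypothesis, and relies essentially on compactness through the elliptic theory of the pluriclosed soliton equation developed in the cited works; once $f$ exists, the remaining steps reduce to clean identities involving the Bismut connection and Cartan calculus.
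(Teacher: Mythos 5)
Your proposal is correct in outline and follows essentially the same route as the paper, which does not prove this proposition itself but explicitly defers to \cite{ALS}, \cite[Proposition 2.16, Theorem 2.18]{Le2} and \cite[Proposition 4.1]{SU} for exactly the inputs you identify: the elliptic/soliton theory producing $f$, the derivation of $\nabla^B V=0$ from the soliton identities, and the Ivanov--Papadopoulos-type vanishing argument showing that an exact Lee form forces $H=0$ on a compact SKT--CYT manifold. The remaining Cartan-calculus steps (using $[V,JV]=0$ from holomorphy of $V$ and the constancy of $\abs{V}^2$) are as you describe.
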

The proof of the previous Proposition is described in details in \cite{ALS} and relays on \cite[Proposition 2.16 and Theorem 2.18]{Le2} and \cite[Proposition 4.1]{SU}. \par
BHE structures naturally arise in strong HKT geometry. We start recalling the following
\begin{definition}
An hyper-Hermitian manifold $(M^{4n},I,J,K,g)$  is said \emph{hyper-K\"ahler with torsion} (HKT, in short) if $\nabla^B_{I}=\nabla^B_{J}=\nabla^B_{K}$.
\end{definition}
Analogously, the HKT condition can be rephrased by saying that the Bismut torsions $H_L=-d^c_L \omega_L, \ L=I,J,K,$ coincides. To simplify the notation, we set $\nabla^B:=\nabla^B_{I}=\nabla^B_{J}=\nabla^B_{K}$ and $H:=H_I=H_J=H_K$. \\
Since $\nabla^B I=\nabla^B J=\nabla^B K=0,$ and $\nabla^B g=0$, it is straightforward to note that $\operatorname{Hol}(\nabla^B) \subseteq \operatorname{Sp}(n)$ \cite{HP}, where $n$ is the quaternionic dimension of the manifold. So, exploiting that $\operatorname{Sp}(n)=\operatorname{Sp}(2n,\C) \cap \operatorname{SU}(2n)$ we get that the three Hermitian structures $(I,g),(J,g),(K,g) $ are CYT. \par
We recall that given a HKT manifold, there exists a unique torsion free connection $\nabla^{Ob}$ on $\Gamma(\operatorname{TM})$ such that $\nabla^{Ob} I=\nabla^{Ob} J=\nabla^{Ob} K=0$. This connection is called the \emph{Obata connection.} The expression of the Obata in terms of the Bismut can be found in \cite{IP2}. Let $A$ be the tensor  defined as
\begin{equation} \label{eqn:A}
2A(X,Y,Z):=-H(X,IY,IZ) - H(IX,IY,Z)-H(X,KY,KZ)-H(IX,KY,JZ).
\end{equation}
Then, by \cite[Proposition 3.1]{IP2},
\begin{equation}\label{eqn:obata}
g(\nabla^{Ob}_X Y,Z)=g(\nabla^B_X Y,Z)+ A(X,Y,Z).
\end{equation}
\begin{definition}
An HKT manifold $(M,I,J,K,g)$ is said \emph{strong hyper-K\"ahler with torsion} (strong HKT, in short) if $dH=0$. 
\end{definition}
In particular, if $(I,J,K,g)$ a strong HKT structure, then each Hermitian structure $(I,g),(J,g),(K,g)$ is BHE. \par
Given a hyper-Hermitian manifold $(M,I,J,K,g)$, the  Lee forms $\theta_I, \theta_J,\theta_K$ coincide \cite{FG} (this was previously known for the HKT case by \cite{IP}), and hence, also the functions $\delta \theta_I, \delta \theta_J,\delta \theta_K$. Also in this case, we will set $\theta:=\theta_I=\theta_J=\theta_K$. \par
In what follows, we will need the following Proposition
\begin{proposition} \label{prop:V}
Let $(M,I,J,K,g)$ be a compact strong HKT manifold, and let $V_I=\frac{1}{2}(\theta^\sharp-\operatorname{grad} f_I),V_J=\frac{1}{2}(\theta^\sharp-\operatorname{grad} f_J),V_K=\frac{1}{2}(\theta^\sharp-\operatorname{grad} f_K)$ be the vector fields respectively associated to $(I,g),(J,g),(K,g)$ defined in Proposition \ref{prop:BHE}. Then $V_I=V_J=V_K:=V$. 
\end{proposition}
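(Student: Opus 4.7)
The plan is to use the fact that, for a strong HKT structure, the three Bismut connections coincide, so Proposition \ref{prop:BHE} applied to each of the Hermitian pairs $(I,g),(J,g),(K,g)$ produces three vector fields $V_I, V_J, V_K$ which are all parallel with respect to the \emph{same} connection $\nabla^B$. From this I will deduce that the pairwise differences are $\nabla^B$-parallel \emph{gradient} vector fields, and a standard compactness argument forces them to vanish.

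First I would note that by Proposition \ref{prop:BHE} each of the Hermitian BHE structures $(I,g)$, $(J,g)$, $(K,g)$ admits a normalized potential $f_L$ such that $V_L = \tfrac{1}{2}(\theta^\sharp - \operatorname{grad} f_L)$ is Bismut-parallel, where here $\theta$ is the common Lee form (since $\theta_I=\theta_J=\theta_K$ as recalled in the text). Because the strong HKT condition guarantees $\nabla^B_I = \nabla^B_J = \nabla^B_K =: \nabla^B$, the three vector fields $V_I, V_J, V_K$ are all parallel with respect to the single connection $\nabla^B$. Hence, for any two labels $L, L' \in \{I,J,K\}$, the difference
\[
W_{L,L'} \;:=\; V_L - V_{L'} \;=\; \tfrac{1}{2}\bigl(\operatorname{grad} f_{L'} - \operatorname{grad} f_L\bigr) \;=\; \tfrac{1}{2}\operatorname{grad}(f_{L'}-f_L)
\]
is simultaneously $\nabla^B$-parallel and a gradient vector field.

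Next I would exploit compactness of $M$: the smooth function $f_{L'} - f_L$ attains its maximum at some point $p \in M$, and at that critical point its gradient vanishes, so $W_{L,L'}(p)=0$. Since $W_{L,L'}$ is $\nabla^B$-parallel on a connected manifold, vanishing at one point forces it to vanish identically. Therefore $V_L = V_{L'}$ for every pair, giving $V_I=V_J=V_K$, which is the desired conclusion (and, as a byproduct, shows that the three normalized potentials $f_I, f_J, f_K$ differ by constants, so by normalization they coincide).

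The proof relies entirely on inputs already available in the excerpt, so there is no real obstacle: the only point that requires a moment's care is confirming that Proposition \ref{prop:BHE}, which is stated for a single BHE structure, can be applied three times with the \emph{same} connection on the right-hand side — this is exactly where the strong HKT hypothesis (i.e., the equality $\nabla^B_I = \nabla^B_J = \nabla^B_K$) enters decisively, by ensuring that parallelism of $V_I, V_J, V_K$ is with respect to a common $\nabla^B$.
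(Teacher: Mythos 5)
Your proof is correct, but it follows a genuinely different route from the paper's. The paper exploits the \emph{Killing} property of each $V_L$ guaranteed by Proposition \ref{prop:BHE}: tracing $\mathcal{L}_{V_L} g = 0$ over an orthonormal frame yields $\delta\theta + \Delta f_L = 0$ for each $L$, hence $\Delta(f_L - f_{L'}) = 0$, and the maximum principle for harmonic functions on a compact manifold forces $f_L - f_{L'}$ to be constant. You instead exploit the \emph{Bismut-parallel} property: since the strong HKT hypothesis identifies the three Bismut connections, the difference $V_L - V_{L'} = \tfrac{1}{2}\operatorname{grad}(f_{L'} - f_L)$ is a $\nabla^B$-parallel gradient field, which must vanish at a maximum of $f_{L'} - f_L$ and therefore (having constant norm) vanishes identically. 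Both arguments are sound and rely only on Proposition \ref{prop:BHE} together with the equality of the Lee forms; yours is arguably more elementary, replacing the elliptic maximum principle with the observation that a parallel field vanishing at one point of a connected manifold vanishes everywhere, while the paper's trace identity $\delta\theta + \Delta f_L = 0$ has the side benefit of being reused later (e.g.\ in the proof of Corollary \ref{corollary:solvmanifolds}). You also correctly isolate the one point needing care, namely that the common connection $\nabla^B$ is exactly what the strong HKT (indeed already the HKT) hypothesis provides.
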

\begin{proof}
Since $V_L=\frac{1}{2}(\theta^\sharp-\operatorname{grad} f_L)$ is Killing (see Proposition \ref{prop:BHE}), $\cal{L}_{V_{L}} g=0$, for any $L=I,J,K$. Fixed any local orthonormal basis $\{e_1,\dots,e_{4n}\}$, if we take the trace \[\sum_i\cal{L}_{V_{L}} g(e_i,e_i)=0\] we get 
\begin{equation}\label{en:laplacian}
\delta \theta+ \Delta f_L=0,
\end{equation}
for each $L=I,J,K$. Hence, $\Delta f_I=\Delta f_J=\Delta f_K$, and so by the maximum principle the functions $f_L$ must differ from each other by a constant. 
\end{proof}
Without loss of generality, we may assume that $f_I=f_J=f_K:=f$. \par
In \cite{ALS} the authors pointed out the lack of compact (non-K\"ahler) examples of BHE manifolds with $\operatorname{Hol}^0(\nabla^B)=\operatorname{SU}(n)$. Using the results in \cite{GFJS} (see also \cite{ALS})   it is possible to show that on a compact BHE manifoldsthe holonomy of $\nabla^B$ cannot be full, unless the structure is K\"ahler. This result is implicit in \cite{GFJS} when combined with Proposition 4.24 of \cite{GFS}. It has been also pointed out in \cite{Pap2}. With the aim to extend the result for the strong HKT manifolds, we include the detailed proof for the sake of completeness:
\begin{theorem} \label{thm:holonomy}
\begin{enumerate}
\item Let $(M,J,g)$ be a compact non-K\"ahler BHE manifold of complex dimension $n$. Then $\operatorname{Hol}^0(\nabla^B) \subseteq \operatorname{SU}(n-1)$. In particular, if a compact BHE manifold has restricted holonomy exactly $\operatorname{SU}(n)$, then it is K\"ahler.
\item Let $(M,I,J,K,g)$ be a compact strong HKT manifold which is not hyper-K\"ahler of quaternionic dimension $n$. Then $\operatorname{Hol}(\nabla^B)\subseteq \operatorname{Sp}(n-1)$. In particular, if a strong HKT manifold has holonomy exactly $\operatorname{Sp}(n)$, then it is hyper-K\"ahler.
\end{enumerate}
\end{theorem}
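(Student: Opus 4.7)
The plan is to extract from Propositions \ref{prop:BHE} and \ref{prop:V} enough $\nabla^B$-parallel, nowhere-vanishing vector fields to force the holonomy representation to fix a $J$-invariant (respectively quaternionic) subspace pointwise, and then simply read off the resulting reduction of structure group.

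For part (1), let $f$ be the BHE potential of $(M,J,g)$ from Proposition \ref{prop:BHE}, and consider $V=\frac{1}{2}(\theta^\sharp - \operatorname{grad} f)$ together with $JV$. Both are $\nabla^B$-parallel and they vanish if and only if $(J,g)$ is K\"ahler; since we exclude this case, and since a Bismut-parallel vector field that vanishes at one point vanishes identically, $V$ and $JV$ are nowhere zero. They span a $J$-invariant real rank-$2$ sub-bundle $E \subset \operatorname{TM}$, and $\nabla^B$-parallelism means the restricted holonomy fixes $V$ and $JV$ pointwise, hence acts trivially on $E$ and preserves the Hermitian orthogonal complement $E^\perp$, a complex bundle of rank $n-1$. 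Because $(M,J,g)$ is CYT we have $\operatorname{Hol}^0(\nabla^B)\subseteq \operatorname{SU}(n)$; combined with the trivial action on the complex line $E$, the determinant of the holonomy on $E^\perp$ equals $1$, giving $\operatorname{Hol}^0(\nabla^B)\subseteq \operatorname{SU}(n-1)$.

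For part (2) I would run the same argument quaternionically. The first step is to observe that a strong HKT manifold that is not hyper-K\"ahler cannot have any of the individual Hermitian structures $(I,g), (J,g), (K,g)$ K\"ahler: if say $(I,g)$ were K\"ahler, then Proposition \ref{prop:BHE} would give $V_I=0$, Proposition \ref{prop:V} would then force $V_J=V_K=0$, and a second application of Proposition \ref{prop:BHE} would make all three structures K\"ahler, contradicting the hypothesis. Hence the common vector field $V=V_I=V_J=V_K$ given by Proposition \ref{prop:V} is nowhere zero. Since $I, J, K$ are $\nabla^B$-parallel, so are $IV, JV, KV$, and at each point the four vectors $V, IV, JV, KV$ form a quaternionic frame, spanning a quaternionic line sub-bundle $E\subset \operatorname{TM}$ which the full holonomy of $\nabla^B$ fixes pointwise. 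On the $\nabla^B$-invariant orthogonal complement $E^\perp$, a quaternionic sub-bundle of rank $n-1$, the holonomy then lies in $\operatorname{Sp}(n-1)$, and combined with the trivial action on $E$ this gives $\operatorname{Hol}(\nabla^B)\subseteq \operatorname{Sp}(n-1)$ on all of $\operatorname{TM}$.

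The main point of care is this quaternionic upgrade: once the common potential $V$ is secured via Proposition \ref{prop:V}, the extra parallel vectors $IV, JV, KV$ come for free from $\nabla^B I = \nabla^B J = \nabla^B K = 0$, so the nontrivial input really is the equality of the $V_L$ together with the elementary observation above that a single K\"ahler factor would propagate to all three structures. Everything else is a direct linear-algebra reduction of the holonomy representation.
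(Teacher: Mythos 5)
Your argument is correct and follows essentially the same route as the paper: both use the nowhere-vanishing Bismut-parallel fields $V, JV$ (resp.\ $V, IV, JV, KV$) from Propositions \ref{prop:BHE} and \ref{prop:V} to produce a pointwise-fixed complex line (resp.\ quaternionic line) and read off the reduction to $\operatorname{SU}(n-1)$ (resp.\ $\operatorname{Sp}(n-1)$). Your explicit remark that a single K\"ahler factor would propagate to all three structures is just an unpacking of the paper's terse ``as the manifold is not hyper-K\"ahler'', so there is no substantive difference.
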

\begin{proof}
We prove the first assertion in details. Given a compact non-K\"ahler BHE manifold $(M,J,g)$, the vectors $V$ and $JV$ satisfy $\nabla^B V=0$ and $\nabla^B J V=0$ and, furthermore, they have constant and non-zero norm (see Proposition \ref{prop:BHE}). \par
Let us fix any point $p \in M$ and any null-homotopic loop $\gamma$ centered at $p$.  The vectors $Z:=V_p$ and $JZ=(JV)_p$ are always non-zero and, exploiting that $\nabla^B V=0$ and $\nabla^B J V=0$, they satisfy $P_\gamma Z=Z$ and $P_\gamma (JZ)=JZ$. Therefore, if $(J,g)$ is non K\"ahler, at any point $p$ there exists a complex vector $W$ which is non zero and such that $P_\gamma W=W$, for any null-homotopic loop $\gamma$ centered at $p$. This implies that the holonomy of the Bismut connection of any non-K\"ahler BHE manifold reduces from $\operatorname{SU}(n)$ to $\operatorname{SU}(n-1)$, where, to the sake of clarity, we specify that we consider $\operatorname{SU}(n-1)$ as a subgroup of $\operatorname{SU}(n)$ in the following way:
$$
\operatorname{SU}(n-1) \to \operatorname{SU}(n),  \quad 
A  \mapsto \begin{pmatrix}
                           1 &   0 \\
                           0 &   A \\
                           \end{pmatrix}.
$$ 
In particular, if there exists a compact BHE manifold with $\operatorname{Hol}^0(\nabla^B) = SU(n)$, it has to be necessarily K\"ahler. \par
The proof of the second statement is analogous. In fact, exploiting that  $\nabla^BV=0$ and  $\nabla^B IV=\nabla^B JV=\nabla^B KV=0$, we get that $\norm{V}^2, \norm{IV}^2, \norm{JV}^2, \norm{KV}^2$ are constant and non zero, as the manifold is not hyper-K\"ahler. We fix any point $p \in M$ and we set $Z:=V_p, \ IZ=(IV)_p\, JZ=(JV)_p, \ KZ=(KV)_p \in \operatorname{T_pM}\setminus\{0\}$. Then, for any loop $\gamma$ based at $p$, $P_\gamma Z=Z, \ P_\gamma I Z=IZ, P_\gamma JZ=JZ, \ P_\gamma KZ=KZ$. From here the proof is straightforward.
 \end{proof}
 
 \begin{remark} Note that non-compact strong HKT examples which are not Bismut-flat are known \cite{GPS}. Recently a non-compact example in dimension 6 with a complete BHE metric has been constructed \cite{G-F}, followed by a compact one in \cite{ALL-26}. 
 \end{remark}

In the last part of this section we rule out the existence of (non-K\"ahler) BHE metrics on some classes of manifolds as, for instance, solvmanifolds, i.e. on compact quotients of a $1$-connected solvable Lie group $G$ by a discrete co-compact subgroup $\Gamma$. 
\begin{corollary} \label{corollary:solvmanifolds}
Let $\Gamma \backslash G$ be a solvmanifold endowed with an invariant Hermitian structure $(J,g)$.  Then the following are equivalent:
\begin{enumerate}[label=\arabic*.]
    \item $(J,g)$ is BHE,
    \item $(J,g)$ is K\"ahler.
\end{enumerate}
In particular, given a solvmanifold $\Gamma \backslash G $ endowed with a invariant  hyper-Hermitian structure $(I, J, K,g)$, then $(I,J,K,g)$ is strong HKT if and only if it is hyper-K\"ahler.
\end{corollary}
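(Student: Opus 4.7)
The first, trivial direction is that Kähler implies BHE: indeed $d\omega = 0$ forces $H = -d^c\omega = 0$ and hence both $dH = 0$ and $\rho^B = 0$. For the converse, my plan is to combine Proposition \ref{prop:BHE} with the invariance hypothesis to force the normalized potential $f$ to be constant, thereby reducing the BHE equations to a generalized Einstein system on $G$, and then to appeal to the classical scalar curvature obstruction for left-invariant metrics on solvable Lie groups.

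The key first step is that $f$ is constant. Because $(J,g)$ is invariant, the Lee form $\theta$ and its codifferential $\delta\theta$ descend from left-invariant objects on $G$, and any left-invariant function on a Lie group is constant; hence $\delta\theta$ is a constant function on $\Gamma\backslash G$. The identity $\Delta f = -\delta\theta$ obtained in the proof of Proposition \ref{prop:V} therefore reads $\Delta f = c$ for some constant $c$, and integrating over the compact manifold forces $c=0$. The maximum principle then yields that $f$ is constant, so $\operatorname{grad} f = 0$ and the steady pluriclosed soliton equations collapse to
\[
Ric^{LC} = \tfrac{1}{4} H^2, \qquad \delta H = 0.
\]
Tracing the first equation over an orthonormal frame gives $\operatorname{scal}^{LC} = \tfrac{1}{4} |H|^2 \geq 0$.

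To finish, I would invoke the classical fact, due to Milnor, that a left-invariant metric on a solvable Lie group has non-positive scalar curvature, with equality precisely when the metric is flat. Applied to $G$ this gives $\operatorname{scal}^{LC} = 0$; since $|H|^2$ is itself left-invariant and hence constant, the identity $\tfrac{1}{4}|H|^2 = \operatorname{scal}^{LC} = 0$ yields $H \equiv 0$, so $(J,g)$ is Kähler. The hyper-Hermitian statement is then an immediate corollary: a strong HKT structure produces three invariant BHE structures $(I,g), (J,g), (K,g)$, each of which is Kähler by the previous paragraph, which is precisely the definition of hyper-Kähler. The only real subtlety I anticipate is pinning down the correct formulation of Milnor's rigidity (which tacitly uses that $G$ must be unimodular, as is automatic for a solvmanifold); everything else is a direct consequence of Propositions \ref{prop:BHE} and \ref{prop:V} together with the invariance hypothesis.
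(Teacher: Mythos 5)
Your argument for the implication (1) $\Rightarrow$ (2) is correct and essentially the paper's: both proofs first force the soliton potential $f$ to be constant (you do this by noting $\delta\theta$ is constant and integrating $\Delta f=-\delta\theta$ over the compact quotient, the paper by observing that an invariant metric is Gauduchon; these are the same fact), and both then reduce to $Ric^{LC}=\tfrac14 H^2$, trace, and invoke the Jensen--Milnor bound $scal^{LC}\le 0$ for left-invariant metrics on solvable groups. Your shortcut of plugging $f=\mathrm{const}$ directly into the steady pluriclosed soliton equations of Proposition \ref{prop:BHE}, rather than passing through $\nabla^B\theta=0$ and the identity \eqref{eqn:rhob2} to get $Ric^B=0$ as the paper does, is a legitimate and slightly cleaner route to the same trace identity.

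However, there is a genuine error in what you call the ``trivial direction.'' From $d\omega=0$ you correctly get $H=0$ and hence $dH=0$, but $H=0$ does \emph{not} give $\rho^B=0$. When $H=0$ the Bismut connection coincides with the Levi--Civita connection, so $\rho^B(X,Y)=\tfrac12\sum_i R^{LC}(X,Y,Je_i,e_i)$ is the ordinary Ricci form of the K\"ahler metric, which represents $c_1(M)$ and vanishes only when the metric is Ricci-flat. In other words, a K\"ahler metric is BHE if and only if it is K\"ahler Ricci-flat, and this is certainly not automatic (the Fubini--Study metric is K\"ahler but not CYT). The implication (2) $\Rightarrow$ (1) therefore genuinely uses the solvmanifold hypothesis: the paper invokes Hasegawa's characterization of K\"ahler solvmanifolds, which are flat, and flatness gives the required Ricci-flatness. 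Your proof as written omits this step and the claimed ``triviality'' would fail on a general K\"ahler manifold. (Your closing aside about unimodularity in Milnor's theorem is harmless; the bound $scal^{LC}\le 0$ holds for all left-invariant metrics on solvable Lie groups.)
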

\begin{alignedproof}{2.\gives1.}
\alignedproofstep{2\gives1}
It follows by the characterization of K\"ahler solvmanifolds given in \cite{Ha}, which are flat.
\alignedproofstep{1\gives2}
Assume that $(J,g)$ is BHE. Then, taking the trace of $\cal{L}_V g=0$ as before, we get that $\Delta f + \delta \theta=0$. Since $(g,J)$ is left-invariant, then it is Gauduchon, and so $f$ must be constant by the maximum principle, implying that $$\nabla^B\theta^\sharp=2\nabla^BV + \nabla^B \operatorname{grad} f=2 \nabla^BV=0.$$
By \eqref{eqn:rhob2}, on a compact Hermitian SKT manifold 
\[
\rho^B(X,Y)=-Ric^B(X,JY)-(\nabla^B_X \theta) JY.
\]
Exploiting that $\rho^B=0$ and $\nabla^B \theta=0$, we get that $Ric^B=0$.\par
Moreover, if $Ric^B=0$ then the symmetric part of  $Ric^B$ vanishes, i.e., $Ric^{LC}=\frac{1}{4} H^2$. Therefore
 \[
scal^{LC}=\frac{1}{4} \norm{H}^2 \ge 0.
  \]
Since for any left-invariant metric on a solvable Lie group $scal^{LC} \le 0$ \cite{Je, Mil}, this forces $scal^{LC}=0$. Hence $H=0$, concluding the proof. 
\qedhere
\end{alignedproof}
\begin{remark}
We observe that with the same proof one can prove that any compact BHE manifold $(M,J,g)$ such that $(J,g)$ is Gauduchon and $g$ has non positive scalar curvature must be K\"ahler. 
\end{remark}
 \section{Strong HKT manifolds with parallel Bismut torsion} \label{section3}
 Up to now, very little is known about the geometry of strong HKT manifolds. In the compact case the only examples are due to Joyce \cite{Jo},  Barberis-Fino \cite{BF}, Brienza-Fino-Grantcharov \cite{BFG2}, and recently Witten \cite{W}. The examples of Joyce \cite{Jo} and Barberis-Fino \cite{BF} are homogeneous and Bismut flat, while the examples constructed in \cite{BFG2} are local products of a Bismut flat manifold with a hyper-K\"ahler one.  The structure of the examples in \cite{W} is not understood yet. In the non-compact case a non-homogeneous example has been found by Moraru and Verbitsky \cite{MV}. \par
 We point out that in each compact example the Bismut torsion $H$ is parallel with respect to the Bismut connection: in fact, in each of the cases above the Bismut connection satisfies the first Bianchi identity. In this section we characterize all the compact strong HKT manifolds with $\nabla^B H=0$. \par
 We recall the following definition.
 \begin{definition}
 A Hermitian manifold $(G'=G \times \R^k, J_L,b')$ is a \emph{Samelson space} if $G$ is a $1$-connected compact semisimple Lie group, $b'$ is a bi-invariant metric on $G'$ and $J_L$ is a left invariant complex structure such that $b'(J_L \cdot, J_L \cdot)=b'(\cdot,\cdot)$. 
 \end{definition}
We recall that on any $G'$ endowed with a bi-invariant metric $b'$ there exists a left-invariant complex structure compatible with $b'$.   In fact, $G'$ shares the same Lie algebra with $G''=G \times \mathbb{T}^k$, namely, $\operatorname{Lie}(G')=\operatorname{Lie}(G'')$. Since $b'$ is bi-invariant, we may assume that $G''$ is endowed with the same metric $b'$ and so, there exists a left-invariant complex structure $J_L$ which satisfy $b'(J_L \cdot, J_L \cdot)=b'(\cdot,\cdot)$ \cite{Sa}. Since $J_L$ is left-invariant, it is defined on $G'$ as well.  
\begin{remark}
Consider the left-invariant connection $ \nabla $ on the Samelson space $G' $ such that all left-invariant vector fields are parallel. Since the metric $ g $ and the complex structure $ J_L$ are left-invariant they are preserved by $ \nabla $. The torsion tensor is $ T^\nabla(X,Y)=[X,Y] $, therefore
\begin{equation}\label{HKT:eq:Joyce_strong}
H(X,Y,Z)=b'([X,Y],Z)
\end{equation}
which is known to be a $ 3 $-form. It follows that the Bismut connection of  $(J_L, b')$ does not depend on the complex structure, as long as the complex structure is invariant. In particular, $dH=0$, since $H$ is a bi-invariant $3$-form. 
\end{remark}
 We may enlarge the previous definition to hyper-Hermitian structures in the following way
  \begin{definition}
 A hyper-Hermitian manifold $(G'=G \times \R^k, J_{L_{I}},J_{L_{J}},J_{L_{K}},b')$ is a \emph{hyper-Hermitian Samelson space} if $G$ is a $1$-connected compact semisimple Lie group, $b'$ is a bi-invariant metric on $G'$ and $J_{L_{I}},J_{L_{J}},J_{L_{K}} $ are left invariant complex structures compatible with $b'$. 
 \end{definition}

We point out that the hypercomplex structure $(J_{L_I}, J_{L_J}, J_{L_K})$ is a Joyce hypercomplex structure in the sense of \cite{Jo}. Indeed, $(J_{L_I}, J_{L_J}, J_{L_K})$ defines a left-invariant hypercomplex structure on $G'$, and hence on $G'' = G \times \mathbb{T}^k$. Since $G''$ is a compact Lie group and any left-invariant hypercomplex structure on a compact Lie group is of Joyce type, the claim follows.
 \par
Furthermore, by the previous remark we easily get that any hyper-Hermitian Samelson space is strong HKT. \par
We recall that for a compact BHE manifold with parallel Bismut torsion, the following splitting result holds:
 \begin{theorem}[\cite{BFG,BPT}] \label{thm:splitting}
Let $(M,J,g)$ be a compact BHE manifold satisfying $\nabla^B H=0$. Then its Riemannian holomorphic universal cover splits isometrically and holomorphically as a product of a K\"ahler Ricci flat manifold and a Samelson space. 
\end{theorem}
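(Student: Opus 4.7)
The plan is to pass to the Riemannian and holomorphic universal cover $\tilde{M}$ and exploit the parallelism of $H$ to build a metric and holomorphic product decomposition. First, I would record the algebraic consequences of $\nabla^B H = 0$: since the Lee form is a contraction of $H$ with $J$ via $\theta(X) = \tfrac{1}{2}\sum H(e_i, Je_i, JX)$, the hypothesis immediately gives $\nabla^B \theta = 0$. Combined with $\rho^B = 0$ and the identity \eqref{eqn:rhob2}, this forces $Ric^B = 0$, hence $Ric^{LC} = \tfrac{1}{4}H^2$. On $\tilde{M}$ the holonomy representation of $\nabla^B$ then decomposes $T\tilde{M}$ into $\nabla^B$-parallel, $g$-orthogonal, $J$-invariant subbundles $E_1, \dots, E_k$, and the parallel 3-form splits as $H = H_1 + \dots + H_k$ with $H_i \in \Lambda^3 E_i^*$.

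Next, I would invoke a de Rham-type theorem for metric connections with parallel skew-symmetric torsion (in the spirit of Cartan--Schouten, and refined more recently for the skew-torsion setting) to promote this infinitesimal splitting to a global one. Since each $E_i$ is $\nabla^B$-parallel and the failure of $\nabla^{LC}$-integrability is controlled by $H_i \in \Lambda^3 E_i^*$, the distributions integrate on the simply connected cover to yield a product $\tilde{M} \cong M_1 \times \cdots \times M_k$ compatible with both $g$ and $J$. Each factor inherits a compact-type BHE structure with parallel Bismut torsion $H_i$.

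Finally, I would classify the factors into two families. Those with $H_i = 0$ are Kähler, and on them $\rho^B$ coincides with the Chern/Ricci form, so $\rho^B = 0$ forces Kähler Ricci-flat factors. On any factor $M''$ with $H'' \neq 0$ I would exploit the Bianchi identity for metric connections with parallel skew torsion
\[
\sum_{\mathrm{cyc}} R^B(X,Y)Z \;=\; \sum_{\mathrm{cyc}} T^B(T^B(X,Y),Z),
\]
combined with $Ric^B = 0$, the pair symmetry $R^B(X,Y,Z,W) = R^B(Z,W,X,Y)$ valid under parallel torsion, and the Hermitian structure, to conclude that the full Bismut curvature vanishes on $M''$. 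Cartan--Schouten's rigidity theorem then identifies such a Bismut-flat $(M'', g'', J'')$ with a compact Lie group of the form $G \times \R^k$ endowed with a bi-invariant metric and a compatible left-invariant complex structure, i.e.\ a Samelson space.

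The main obstacle is the global integration step: because $\nabla^B \neq \nabla^{LC}$, the classical de Rham theorem does not apply directly, and one must carefully track how the parallel pieces $H_i$ interact with $g$, $J$ and $\nabla^{LC}$ to produce a Riemannian \emph{and} holomorphic splitting. Once this is established, the identification of the Kähler factor as Ricci-flat is immediate from $\rho^B = 0$, and the identification of the non-Kähler factor as a Samelson space reduces to the essentially standard argument of deducing Bismut-flatness on an irreducible factor with $Ric^B=0$ and $\nabla^B H = 0$ and then applying Cartan--Schouten.
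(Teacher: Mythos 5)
The paper does not prove this theorem itself; it is quoted from \cite{BFG,BPT}, so I am measuring your proposal against the arguments there. Your overall architecture (reduce to $Ric^B=0$, split off a K\"ahler Ricci-flat factor, show the remaining factor is Bismut-flat, invoke Cartan--Schouten and Samelson) is the right one, and the first step ($\nabla^B H=0\Rightarrow\nabla^B\theta=0$, hence $Ric^B=0$ via \eqref{eqn:rhob2}) is correct and is exactly the remark the paper makes after the theorem. However, there are two genuine gaps. First, your claim that the parallel $3$-form decomposes as $H=H_1+\dots+H_k$ with $H_i\in\Lambda^3E_i^*$ under the holonomy splitting is asserted, not proved, and it is false for general metric connections with parallel skew torsion: for a Sasakian structure the torsion $\eta\wedge d\eta$ has a nonzero mixed component across the Reeb direction and its complement. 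The splitting that actually works here is the coarser one $T\widetilde M=\ker H\oplus(\ker H)^\perp$; one checks that $\ker H$ is $\nabla^B$-parallel (since $H$ is), hence also $\nabla^{LC}$-parallel (because $\iota_XH=0$ kills the difference of the two connections in those directions), and $J$-invariant, and only then does de Rham apply. You correctly flag the de Rham issue as ``the main obstacle'' but do not resolve it.

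The second and more serious gap is the final step. From $Ric^B=0$, the first Bianchi identity with parallel torsion, the pair symmetry of $R^B$, and $J$-invariance, you cannot conclude $R^B=0$ pointwise: the space of algebraic curvature tensors satisfying all of these constraints with a fixed torsion term is an affine space whose linear part contains the (nonzero, for $n\ge 2$) space of Ricci-flat K\"ahler-type curvature tensors --- indeed the Levi--Civita curvature of any K\"ahler Ricci-flat metric satisfies every identity you list with $T=0$ and is not flat. This step is the actual content of the theorem, and the proofs in \cite{BFG,BPT} obtain it differently: the SKT condition $dH=0$ together with $\nabla^BH=0$ forces the $4$-form $\sigma_{X,Y,Z,W}\,g(T(X,Y),T(Z,W))$ to vanish, i.e.\ the bracket $g([X,Y],Z):=-H(X,Y,Z)$ satisfies the Jacobi identity and turns each fibre of $(\ker H)^\perp$ into a metric Lie algebra of compact type with trivial centre; Bismut-flatness of that factor is then extracted from this Lie-theoretic structure (together with compactness), not from an algebraic manipulation of $Ric^B=0$. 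Your proposal never uses $dH=0$ in this step, which is a signal that the argument as written cannot close.
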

\begin{remark}
We  point out that any compact BHE manifold with parallel Bismut torsion has non-negative Ricci. This is a consequence of the fact that if $H$ is parallel, then so is the Lee form $\theta$, and therefore $Ric^B=0$ (see Equation \eqref{eqn:rhob2}). Since $Ric^B=0$ then also its symmetric part must vanish, and therefore $Ric^{LC}=\frac{1}{4} H^2 \ge 0$. 
\end{remark}
\begin{remark}
Observe that we may always assume that the K\"ahler factor is compact. In fact, the universal cover of a compact K\"ahler Ricci flat manifold splits uniquely as a product of the kind $\C^k$ and $N$, where $N$ is the product of manifolds with holonomy $\operatorname{Sp}(m)$ and $\operatorname{SU}(r)$ \cite{Be}. Now, since by the assumption of the Theorem  \ref{thm:splitting} $(M,J,g)$ is compact, BHE and with parallel Bismut torsion, it has non-negative Ricci and so its universal cover splits as a product of a compact manifold and a vectorial factor \cite{CG}. Hence, $N$ is compact and the K\"ahler vectorial factor can be absorbed by the Bismut flat part. 
\end{remark}
By Theorem \ref{thm:splitting}, together with the fact that any compact BHE manifold with parallel Bismut torsion has non-negative Ricci, we get the following result.
 \begin{corollary}
 Any compact BHE manifold with parallel Bismut torsion is formal according to Sullivan.
 \end{corollary}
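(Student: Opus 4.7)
The plan is to combine the splitting in Theorem \ref{thm:splitting} with the classical formality of compact Kähler manifolds and of compact Lie groups, and then to descend formality along a finite covering.

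Concretely, using Theorem \ref{thm:splitting} together with the two remarks above, I would first show that $M$ admits a finite regular cover $\tilde M$ which is, holomorphically and isometrically, a Riemannian product
\[
\tilde M \;\cong\; N \times G \times T^k,
\]
where $N$ is a compact simply connected Kähler Ricci-flat manifold, $G$ is a $1$-connected compact semisimple Lie group, and $T^k$ is a flat torus. Indeed, by Theorem \ref{thm:splitting} and the two remarks the universal cover of $M$ has the form $(\text{compact K\"ahler})\times G \times \R^k$; the compactness of $M$, combined with non-negative Ricci curvature, forces $\pi_1(M)$ to act cocompactly on the Euclidean factor, and a Bieberbach-type argument produces a finite-index subgroup acting as a lattice of translations on $\R^k$ and trivially on $N\times G$, so that the corresponding finite quotient has the stated compact-product form.

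Next, each factor of $\tilde M$ is formal in the sense of Sullivan: the Kähler factor $N$ by the classical theorem of Deligne--Griffiths--Morgan--Sullivan, while the compact Lie group $G$ and the torus $T^k$ are formal because the real cohomology of any compact Lie group is a free graded-commutative algebra on its primitive generators, hence coincides with its own Sullivan minimal model. Since formality is preserved by finite direct products, the cover $\tilde M$ is formal. Finally, one invokes the standard fact from rational homotopy theory that formality descends along finite regular coverings: the de Rham cdga of $M$ is the invariant subalgebra of that of $\tilde M$ under the finite deck-transformation group, and the formality zigzags on the factors (obtained via the $\partial\bar\partial$-lemma on $N$ and via bi-invariant forms on $G$ and $T^k$) can be chosen equivariantly, so that passing to invariants yields formality of $M$ itself.

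The main subtle point is the first step, namely promoting the universal-cover splitting of Theorem \ref{thm:splitting} to an honest compact-product splitting of a finite cover, rather than merely a product with a Euclidean factor on which $\pi_1(M)$ could act in a twisted way. This is precisely where the compactness of $M$ and the non-negative Ricci curvature are essential, through a Cheeger--Gromoll plus Bieberbach-type analysis of $\pi_1(M)$; once this reduction is in place, the remainder of the argument rests only on classical results.
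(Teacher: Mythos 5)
Your overall strategy (compactify the cover, use formality of K\"ahler manifolds and Lie groups, descend along a finite cover) is genuinely different from the paper's, and both of its key steps have gaps. First, the claim that a finite-index subgroup of $\pi_1(M)$ acts ``as a lattice of translations on $\R^k$ and trivially on $N\times G$'' is not obtainable: Cheeger--Gromoll \cite{CG} plus Bieberbach only let you arrange that the subgroup acts by translations on the Euclidean factor, while its action on the compact factor $N\times G$ is by isometries that you cannot kill, only push into the identity component of $\operatorname{Iso}(N\times G)$ after a further finite cover. The paper's own example (the mapping torus of the Fermat quartic times $\mathrm{SU}(2)$, see \cite{BFG2}) shows the action on the compact factor is genuinely nontrivial. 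Consequently the finite cover is a priori only a flat $N\times G$-bundle over $T^k$; trivializing it requires an extra argument (the closure of the abelian monodromy in the compact group $\operatorname{Iso}(N\times G)$ is a torus times a finite group, so after a finite cover the commuting monodromies can be connected to the identity inside a torus), and even then the resulting product decomposition is only smooth --- certainly not ``holomorphic and isometric'' as you assert, though for formality the diffeomorphism type suffices.

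Second, ``formality descends along finite regular coverings'' is not a standard fact: whether a finite free quotient of a formal manifold is formal is exactly the delicate point, and it holds only when the formality zigzag can be chosen equivariantly for the deck group, which a general formal cdga with a finite group action need not admit. You assert equivariance of the zigzags but this is the entire content of the step, and it becomes murkier still once the cover is only a twisted product. The paper avoids both issues at once by quoting Milivojevi\'c's theorem \cite{Mi}: for a closed manifold of non-negative Ricci curvature, formality of the universal cover implies formality of the manifold. Since $H$ parallel forces $Ric^{LC}=\tfrac14 H^2\ge 0$, one applies this directly to the (non-compact) universal cover $N\times G\times\R^k$, which is formal because it is homotopy equivalent to the formal product $N\times G$; no compact finite cover and no descent along finite covers is needed. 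If you want to keep your route, you must either prove the equivariant-formality statement for the specific deck action at hand, or simply cite \cite{Mi} as the paper does.
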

 \begin{proof}
 Let $(M,g)$ be a compact manifold with non-negative Ricci and let $\widetilde{M}$ be its universal cover. By \cite{Mi}, if $\widetilde M$ is formal, then $M$ is formal. Since any BHE metric $g$ with parallel Bismut torsion has non-negative Ricci, it suffices to look at the formality of the universal cover. \par
 By Theorem \ref{thm:splitting}, the universal cover of a compact BHE manifold with parallel Bismut torsion is diffeomorphic to a product of a compact K\"ahler manifold and a Samelson space which are both formal. Then the result follows by the fact that a product manifold is formal if and only if each factor is formal.
 \end{proof}
 Now, we extend Theorem \ref{thm:splitting} to compact strong HKT manifolds. Just for the proof of the following two Theorems we will denote the hyper-complex structure by $J_1, J_2, J_3$ instead of $I,J,K$ since the notation with the subscript suits better for the proofs.  
 \begin{theorem} \label{thm:splitting2} 
 Let $(M, J_1, J_2,J _3, g)$ be a compact strong HKT manifold satisfying $\nabla^B H=0$ and let $(\widetilde M, \widetilde J_a, \widetilde g)$ its universal cover. Then, $(\widetilde M,\widetilde J_1, \widetilde J_2, \widetilde J_3,\widetilde g)$ splits as a product of a compact hyper-K\"ahler manifold and a hyper-Hermitian Samelson space.
 \end{theorem}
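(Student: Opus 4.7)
The strategy is to apply Theorem~\ref{thm:splitting} to each Hermitian structure $(\widetilde J_a,\widetilde g)$, $a=1,2,3$, and then to verify that the three resulting BHE splittings are actually the same one. Each $(\widetilde J_a,\widetilde g)$ is BHE because $(\widetilde J_1,\widetilde J_2,\widetilde J_3,\widetilde g)$ is strong HKT, and the Bismut connection $\nabla^B$ together with its torsion $H$ is shared by all three, so the hypothesis $\nabla^B H=0$ transfers uniformly. Theorem~\ref{thm:splitting} therefore yields, for each $a$, an isometric $\widetilde J_a$-holomorphic decomposition $\widetilde M=\widetilde N_a\times\widetilde S_a$ with $\widetilde N_a$ K\"ahler Ricci flat for $\widetilde J_a$ and $\widetilde S_a$ a Samelson space; using the remark following Theorem~\ref{thm:splitting} we normalize each splitting so that $\widetilde N_a$ is compact and all Euclidean directions are absorbed into $\widetilde S_a$.

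The central step is to show that these three splittings coincide. On the simply connected $\widetilde M$, let $D\subset T\widetilde M$ denote the distribution of vectors admitting a global $\nabla^B$-parallel extension, equivalently the fiberwise fixed subspace of the Bismut holonomy representation, and let $D^\perp$ be its $\widetilde g$-orthogonal complement. Both distributions are $\nabla^B$-parallel, and since each $\widetilde J_a$ is itself $\nabla^B$-parallel it sends globally $\nabla^B$-parallel vector fields to globally $\nabla^B$-parallel vector fields; consequently $D$ and $D^\perp$ are invariant under each $\widetilde J_a$. By a de Rham-type theorem for metric connections with parallel torsion, these distributions integrate to a metric splitting $\widetilde M=\widetilde S\times\widetilde N$. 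Comparing this with the decomposition furnished by Theorem~\ref{thm:splitting} applied to $(\widetilde J_1,\widetilde g)$ under the chosen normalization, one identifies $D$ with the tangent bundle of $\widetilde S_1$ (the Bismut-flat summand, with absorbed Euclidean directions) and $D^\perp$ with the tangent bundle of $\widetilde N_1$ (the compact K\"ahler Ricci flat side, which has non-trivial Bismut holonomy once Euclidean factors have been removed). The same comparison works for $a=2,3$, and hence all three splittings agree with $\widetilde M=\widetilde N\times\widetilde S$.

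Once the common splitting is in place, the conclusion follows directly. On $\widetilde N$ the torsion vanishes since $\nabla^B=\nabla^{LC}$ there, so each $(\widetilde J_a,\widetilde g|_{\widetilde N})$ is K\"ahler; combined with the imaginary quaternionic relations this gives a hyper-K\"ahler structure $(\widetilde N,\widetilde J_1,\widetilde J_2,\widetilde J_3,\widetilde g|_{\widetilde N})$, with Ricci flatness inherited from the K\"ahler Ricci flat factor of each BHE splitting. On $\widetilde S$, $(\widetilde J_1,\widetilde g|_{\widetilde S})$ is a Samelson Hermitian structure by Theorem~\ref{thm:splitting}; since the Bismut connection on a Samelson space is precisely the canonical left-invariant connection under which left-invariant vector fields are parallel (cf.~\eqref{HKT:eq:Joyce_strong}), $\nabla^B$-parallelism coincides with left-invariance, so $\widetilde J_2$ and $\widetilde J_3$ are automatically left-invariant and compatible with the bi-invariant metric. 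Thus $\widetilde S$ is a hyper-Hermitian Samelson space. The main obstacle is the middle step, namely verifying that the three BHE splittings really coincide; I expect this to reduce, via the de Rham theorem for metric connections with parallel torsion, to the fact that the distribution of globally $\nabla^B$-parallel vector fields has an intrinsic, complex-structure-independent description that is manifestly preserved by every $\widetilde J_a$.
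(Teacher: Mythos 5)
Your proof is correct in outline and shares its skeleton with the paper's argument: both apply Theorem~\ref{thm:splitting} to each of the three Hermitian structures and then show the resulting decompositions coincide. The difference lies in how that coincidence is established. The paper identifies the three splittings via the uniqueness of the Riemannian de Rham decomposition (plus Cheeger--Gromoll), matching irreducible factors and observing that an irreducible factor of the K\"ahler Ricci flat part can never be an irreducible factor of the group part; you instead characterize the Samelson summand intrinsically as the fixed subspace $D$ of the Bismut holonomy, which is manifestly independent of $a$ and manifestly $\widetilde J_a$-invariant. Your route is arguably cleaner conceptually and also hands you, essentially for free, the left-invariance of $\widetilde J_2,\widetilde J_3$ on the group factor (parallel tensors for the flat left-invariant connection are left-invariant), whereas the paper gets this from the fact that each application of Theorem~\ref{thm:splitting} already produces a Samelson space. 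One soft spot: you invoke a ``de Rham-type theorem for metric connections with parallel torsion'' to integrate $D$ and $D^\perp$. As a standalone statement this needs justification --- a $\nabla^B$-parallel distribution does not automatically yield a Riemannian product unless one also checks that $H$ has no mixed components, and the component of $\iota_Z H$ in $\bigwedge^2 D^\perp$ for $Z\in D$ is not excluded by holonomy invariance alone. But in your argument this invocation is actually superfluous: the splitting $\widetilde M=\widetilde N_1\times\widetilde S_1$ from Theorem~\ref{thm:splitting} already realizes $D=T\widetilde S_1$ and $D^\perp=T\widetilde N_1$ as an integrated metric product, and the same identification for $a=2,3$ forces all three splittings to agree. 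Do also sharpen ``non-trivial Bismut holonomy'' on the compact K\"ahler Ricci flat factor to ``holonomy with no nonzero fixed vectors'' (which holds since its irreducible factors have holonomy $\operatorname{SU}(r)$ or $\operatorname{Sp}(m)$ after the flat directions are absorbed); that is what the identification $D=T\widetilde S_a$ really uses.
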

 \begin{proof}
If we apply Theorem \ref{thm:splitting} to each Hermitian structure $(J_a, g)$, we get that for any $a=1, 2,3$,
\[
(\widetilde M, \widetilde J_a, \widetilde g)=(K_a,I_a,k_a) \times (G'_a,I_{L_{a}},b'_a),
\]
 where $(K_a,I_a,k_a)$ is the compact K\"ahler Ricci flat factor and $(G'_a,I_{L_{a}},b'_a)$ is a Samelson space. We recall that $G'_a = G_a \times \R^{s_{a}}$, where $G_a$ is a $1$-connected compact semisimple Lie group and that the bi-invariant metric $b'_a$ is the product of a bi-invariant metric $b_a$ on $G_a$ and a flat metric $h_a$ on $\R^{s_{a}}$ (see \cite{Mil}). \par
 We first observe that $s:=s_1=s_2=s_3$ and $h:=h_1=h_2=h_3$. This is a straightforward consequence of the uniqueness of de De Rham splitting and the Cheeger-Gromoll Theorem \cite{CG}. Analogously, $(K_1,k_1)=(K_2,k_2)=(K_3,k_3)$ and $(G_1,b_1)=(G_2,b_2)=(G_3,b_3)$. We explain the first in detail, the second is analogous. \par
We prove that $(K_1,k_1)=(K_2,k_2)$, the proof proceeds in the same way for $(K_2,k_2)=(K_3,k_3)$. Since any irreducible de Rham factor of $(K_1,k_1)$ is still K\"ahler, by the uniqueness of de De Rham splitting, any irreducible de Rham factor of $(K_1,k_1)$ is identified with a unique irreducible de Rham factor of $(K_2,k_2)$. This forces $(K,k):=(K_1,k_1)=(K_2,k_2)=(K_3,k_3)$, and so $(K,k)$ is hyper-K\"ahler as it is preserved by $I_1, I_2, I_3$, which satisfy $I_1I_2=-I_2I_1=I_3$. \par
With the same argument, one can prove that $(G,b):=(G_1,b_1)=(G_2,b_2)=(G_3,b_3)$, and so $(G'=G \times \R^s, b'=b+h, I_{L_{a}})$ is a hyper-Hermitian Samelson space.
 \end{proof}
 As an application of this result we may characterize a finite cover of a strong HKT manifold with parallel Bismut torsion.
 \begin{theorem}
Any compact strong HKT manifold with parallel Bismut torsion $(M, J_1, J_2, J_3, g)$  is, up to a finite cover, the product of a compact hyper-K\"ahler manifold and a compact Bismut flat one. 
 \end{theorem}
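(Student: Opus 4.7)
The plan is to combine the universal cover splitting of Theorem \ref{thm:splitting2} with the Cheeger-Gromoll structure theorem for compact Riemannian manifolds of non-negative Ricci curvature. Since the Bismut torsion is assumed parallel, the earlier remark shows that $Ric^{LC} \ge 0$, so the compact strong HKT manifold $(M,J_1,J_2,J_3,g)$ falls within the hypothesis of that theorem.

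First I would invoke Theorem \ref{thm:splitting2} to write the universal cover as a hyper-Hermitian Riemannian product $\widetilde M = K \times G'$, where $K$ is a compact hyper-K\"ahler manifold and $G' = G \times \R^s$ is a hyper-Hermitian Samelson space, with $G$ a $1$-connected compact semisimple Lie group and $\R^s$ carrying its flat metric. Only the Euclidean summand $\R^s$ prevents this from already being a product of two compact pieces, and my strategy is to replace it with a flat torus by passing to a finite cover. By the Cheeger-Gromoll structure theorem for compact manifolds with $Ric^{LC} \ge 0$, there exists a finite Riemannian cover $M' \to M$ isometric to $T^\ell \times N$, with $T^\ell$ a flat torus and $N$ compact simply connected without Euclidean de Rham factors. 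Since the maximal Euclidean de Rham factor of $\widetilde M$ is precisely $\R^s$ (the compact factors $K$ and $G$ contain no line), the uniqueness of the de Rham decomposition forces $\ell = s$ and $N = K \times G$, and identifies $T^s$ with $\R^s/\Lambda$ for some full-rank lattice $\Lambda$.

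It then remains to verify that this isometric splitting of $M'$ is in fact hyper-Hermitian. The lattice $\Lambda$ sits inside the central subgroup $\R^s \subset G'$ and acts on $\widetilde M$ by left-translations in this subgroup; since the hypercomplex structure of $G'$ is left-invariant by construction, it descends to a well-defined left-invariant hyper-Hermitian structure on the quotient $G'/\Lambda \cong G \times T^s$. By the remark following the definition of Samelson space, the Bismut connection of this quotient is exactly the left-invariant connection making every left-invariant vector field parallel, which is flat; hence $G \times T^s$ is a compact Bismut-flat hyper-Hermitian manifold. Combining this with the unaltered hyper-K\"ahler factor $K$ yields $M' = K \times (G \times T^s)$ as the claimed hyper-Hermitian product of a compact hyper-K\"ahler manifold and a compact Bismut-flat one. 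The main subtlety I anticipate is justifying that the deck action preserves the three-fold splitting into $K$, $G$ and $\R^s$, which amounts to checking that no isometry of $\widetilde M$ mixes these pieces: this follows from the uniqueness of the de Rham decomposition, since the irreducible de Rham factors of $K$ (compact hyper-K\"ahler irreducible), of $G$ (compact simple) and of $\R^s$ (flat lines) are pairwise non-isomorphic.
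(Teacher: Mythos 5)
Your reduction to Theorem \ref{thm:splitting2} and the observation that the deck group preserves the three-fold splitting $K\times G\times \R^s$ (because no irreducible de Rham factor of $K$, $G$ or $\R^s$ can be isometric to one of another piece) are both correct and match the paper. The gap is in the step where you invoke Cheeger--Gromoll to produce a finite cover \emph{isometric} to $T^\ell\times N$ with $N$ compact and simply connected. That is not what the Cheeger--Gromoll structure theorem gives: it gives the isometric splitting of the \emph{universal} cover and structural information on $\pi_1$, but a finite cover need not split off a flat torus isometrically. A deck transformation can act as $(\sigma_1,\sigma_2,\mathrm{translation})$ with $\sigma_2$ a left translation of $G$ of infinite order; then no finite-index subgroup acts purely by translations on $\R^s$ and trivially on $G$, and the quotient of $G\times\R^s$ is a mapping-torus-type manifold that is compact and Bismut flat but \emph{not} isometric to $G\times T^s$. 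The example the paper itself cites from \cite{BFG2} exhibits exactly this: a finite cover splits as (Fermat quartic)$\times$(Hopf surface), and the Hopf surface $(SU(2)\times\R)/\mathbb{Z}$ with a nontrivial twist by a left translation is not isometric to $SU(2)\times S^1$, nor is any of its finite covers. So both your statement of the auxiliary theorem and your final conclusion (that the Bismut-flat factor is literally $G\times T^s$) are too strong.

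The ingredient you are missing, and which the paper uses instead, is the finiteness of the group of hyper-holomorphic isometries of the compact simply connected hyper-K\"ahler factor $K$ (Killing fields on a compact Ricci-flat manifold are parallel, and $K$ has no flat factor, so $\mathrm{Iso}(K)$ is finite). Projecting $\pi_1(M)$ to $\mathrm{Iso}(K)$, the kernel $\Gamma$ has finite index, so $M'=\widetilde M/\Gamma\to M$ is a finite cover; since $\Gamma$ acts trivially on $K$, $M'$ splits as $K$ times the compact quotient of $G\times\R^s$ by the residual action, and that quotient is Bismut flat because Bismut flatness is local and descends through hyper-holomorphic isometric quotients. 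No control over the action on $G\times\R^s$ beyond cocompactness is needed, and none should be expected.
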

 \begin{proof}
 Let $(\widetilde M, \widetilde J_a, \widetilde g)=(K, I_a,k) \times (G',I_{L_{a}},b')$ be the universal cover of the strong HKT manifold with parallel Bismut torsion.  By the previous Theorem, $\widetilde M \cong K \times G \times \R^s$, where $K$ is a compact hyper-K\"ahler manifold, $G$ is a compact semisimple Lie group and $\R^s$ is endowed with its flat metric. Since $K$ is hyper-K\"ahler, any de Rham factor of $K$ is hyper-K\"ahler, implying that no irreducible de Rham factor of $K$ can be isometric to an irreducible de Rham factor of $G$. Furthermore, since $K$ is compact, no irreducible de Rham factor of $K$ can be isometric to some $\R^{4r}$. From this we get that 
 \[
 \text{Iso}(\widetilde M, \widetilde g) \cong  \text{Iso}(K) \times  \text{Iso}(G) \times  \text{Iso}(\R^s).  
 \]
 The fundamental group $\pi_1(M)$ acts on $(\widetilde M, \widetilde J_a, \widetilde g)$ via hyper-holomorphic isometries. Let $\sigma$ be such an automorphism; by the discussion above, there exist three isometries $\sigma_1, \sigma_2$ and $\sigma_3$ such that 
 \[
 \sigma (p,g,t)=(\sigma_1 (p), \sigma_2 (g), \sigma_3(t)), \ \text{for} \ p \in K,\ g \in G \ \text{and} \ t \in \R^s,
 \]
 and we point out that $\sigma_1$ is a hyper-K\"ahler isometry of $(K, I_a,k)$. \par
 We consider the group homomorphism $\tau: \sigma \to \sigma_1$ from $\pi_1(M)$ to the group of hyper-holomorphic isometries of $(K, I_a,k)$. Let $\Gamma$ to be its kernel. Then $\Gamma$ is normal in $\pi_1(M)$, as it is the kernel of a group homomorphism and, furthermore, it is finite. In fact $\frac{\pi_1(M)}{\Gamma} \cong \operatorname{Im}(\tau)$ which is finite, as so is the group of hyper-holomorphic isometries of $(K, I_a,k)$. \par
 Observe that if $\Gamma$ is trivial, then $\pi_1(M)$ is itself finite, and, hence, the covering $\widetilde M \to M$. \par
 Since $\Gamma$ is a finite normal subgroup of $\pi_1(M)$, we may consider the finite (and hence compact) cover $M'=\widetilde M/ \Gamma \to M$, which splits as a product of $(K, I_a,k) $ and a Bismut flat space.
  \end{proof}
  \begin{remark}
An example of a compact strong HKT manifold with parallel Bismut torsion, which is not globally a product of a Bismut-flat manifold and a hyper-K\"ahler one, was exhibited in \cite{BFG2}. It is given by a mapping torus of the product of the Fermat quartic, endowed with its hyper-K\"ahler metric, and $\mathrm{SU}(2)$. Moreover, Remark~4.1 in \cite{BFG2} shows that this manifold admits a finite cover which splits as a product of the Hopf surface and the Fermat quartic.
 \end{remark}
 As a Corollary, we immediately get that
 \begin{corollary}
Any compact strong HKT manifold with parallel Bismut torsion is formal according to Sullivan.
 \end{corollary}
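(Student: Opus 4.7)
The plan is to reduce the statement, via the structure theorem just established, to the known formality of its building blocks, exactly paralleling the argument used in the BHE case. By Theorem \ref{thm:splitting2}, the universal cover $\widetilde M$ is diffeomorphic to $K \times G \times \mathbb{R}^s$, where $K$ is a compact hyper-K\"ahler manifold and $G$ is a $1$-connected compact semisimple Lie group. Both pieces are formal for classical reasons: $K$ is K\"ahler and hence formal by Deligne--Griffiths--Morgan--Sullivan, while $G$ is a compact Lie group and therefore formal, as is the Euclidean factor $\mathbb{R}^s$. Since a finite product of formal spaces is formal, the universal cover $\widetilde M$ is formal.

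To pass from formality of $\widetilde M$ to formality of $M$, I would reproduce verbatim the argument of the preceding corollary in the BHE setting. First, observe that any compact strong HKT manifold with parallel Bismut torsion has non-negative Ricci curvature: the parallelism $\nabla^B H = 0$ implies $\nabla^B \theta = 0$, which combined with $\rho^B = 0$ and \eqref{eqn:rhob2} forces $Ric^B = 0$ and hence $Ric^{LC} = \tfrac{1}{4} H^2 \geq 0$. Under this curvature hypothesis, the result of Miller \cite{Mi} guarantees that formality of the universal cover descends to $M$, yielding the conclusion.

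The main step requiring care is the verification that the hypotheses of Miller's theorem are met, but these are precisely ensured by the non-negativity of the Ricci curvature, which is the same ingredient that made the analogous BHE corollary go through. In practice, there is no genuine obstacle once Theorem \ref{thm:splitting2} is in hand: the argument is a straightforward concatenation of the splitting theorem, the classical formality of K\"ahler manifolds and of compact Lie groups, and the formality-descent mechanism from \cite{Mi}.
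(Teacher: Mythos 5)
Your argument is correct and follows essentially the same route as the paper, which deduces this corollary immediately from the splitting theorem by the same mechanism used in the BHE case: formality of the universal cover (a product of a compact hyper-K\"ahler manifold, a compact Lie group, and a Euclidean factor, each formal) descends to $M$ via non-negative Ricci curvature and the result of \cite{Mi}. The only quibble is attribution: the descent result in \cite{Mi} is due to Milivojevi\'c, not ``Miller.''
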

\begin{remark}
We point out that an analogous result does not hold for compact balanced HKT manifolds. Indeed, it has been shown by Dotti and Fino \cite{DF} that there exist a (non hyper-K\"ahler) $8$-dimensional balanced HKT nilmanifold with parallel Bismut torsion.
\end{remark}
\section{Ricci foliation of an Hypercomplex manifold} \label{section4}
 Let $(M^{4n},I,J,K)$ be a compact hypercomplex manifold and let $\nabla^{Ob}$ be the Obata connection, i.e., the unique torsion free connection satisfying $\nabla^{Ob} I=\nabla^{Ob} J=\nabla^{Ob} K=0$. We recall the following
 \begin{definition}
Let $(M^{4n},I,J,K)$ be a hypercomplex manifold. A $(2n,0)$ form $\alpha$ is said to be $q$-real if $J \overline{\alpha}=\alpha$.
 \end{definition}
 Let $F(\operatorname{TM})$ be the frame bundle of $M$ and consider the character $det:\operatorname{GL}(n,\mathbb{H}) \to \R_{>0}$ induced by the Dieudonnè determinant \cite{Di}. The associated bundle $K_\R$ is the real line bundle of $(2n,0)$ $q$-real forms on $M$. More precisely, fixing any hyper-Hermitian metric $g$ on $M$, $K_\R$ is the real line bundle generated by the $(2n,0)$ form $\Omega^n$, where $\Omega=\omega_J + i \omega_K$, where $\omega_J=g(J \cdot, \cdot)$ and $\omega_K=g(K\cdot, \cdot)$. Clearly, the canonical bundle $K_{(M,I)}$ is obtained as a complexification of $K_\R$. 
 \begin{definition}
 The \emph{Obata Ricci curvature} $\Theta$ is the curvature of the Obata connection on $K_\R$.
 \end{definition}
 \begin{proposition}
 The Obata Ricci curvature $\Theta$  of a compact hypercomplex manifold $(M,I,J,K)$  is $\operatorname{SU}(2)$-invariant and exact.
 \end{proposition}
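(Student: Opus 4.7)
The plan is to establish exactness and $\operatorname{SU}(2)$-invariance independently. For \textbf{exactness}, I would pick a compatible hyper-Hermitian metric $g$ on $M$; such a metric always exists by averaging any Riemannian metric over the compact group $\operatorname{Sp}(1)$ acting on $TM$ via the hypercomplex structure. Then $\Omega^n=(\omega_J+i\omega_K)^n$ is a globally defined, nowhere-vanishing, $q$-real $(2n,0)_I$-form, since $\omega_J$ and $\omega_K$ are non-degenerate. It therefore trivializes the real line bundle $K_\R$, and one can write $\nabla^{Ob}\Omega^n=\alpha\otimes\Omega^n$ for a globally defined real $1$-form $\alpha$. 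The induced curvature on $K_\R$ is then $\Theta=d\alpha$, which is manifestly exact.

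For \textbf{$\operatorname{SU}(2)$-invariance}, I would prove the equivalent statement that $\Theta$ is of type $(1,1)$ with respect to every complex structure $L=aI+bJ+cK$ with $a^2+b^2+c^2=1$ in the twistor family; this characterizes the kernel of the natural $\mathfrak{su}(2)$-action on $\Lambda^2 T^*M$. Fix such an $L$. Since $(I,J,K)$ is hypercomplex, $L$ is integrable, and the torsion-free Obata connection preserves $L$. A Cartan-type computation using torsion-freeness gives, for any top $(2n,0)_L$-form $\beta$ and any $(0,1)_L$-vector $\bar Y$,
\[
(\mathcal{L}_{\bar Y}\beta)(X_1,\dots,X_{2n})=(\nabla^{Ob}_{\bar Y}\beta)(X_1,\dots,X_{2n})+\sum_i\beta(X_1,\dots,\nabla^{Ob}_{X_i}\bar Y,\dots,X_{2n}),
\]
and on $(1,0)_L$-inputs $X_i$ the correction terms vanish because $\nabla^{Ob}$ preserves the type decomposition. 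Since $\iota_{\bar Y}\beta=0$ and $d\beta$ is of pure bidegree $(2n,1)_L$, the left-hand side equals $\iota_{\bar Y}\bar\partial_L\beta$ evaluated at the same inputs, so the $(0,1)$-part of $\nabla^{Ob}$ on $K_{(M,L)}$ coincides with $\bar\partial_L$. Squaring this connection shows that the $(0,2)_L$-component of the induced curvature vanishes.

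Finally, $K_{(M,L)}$ is canonically isomorphic to $K_\R\otimes_\R\C$ because both arise from the Dieudonn\'e determinant character on the quaternionic frame bundle, so the curvature above is just the complexification of $\Theta$. Being real, $\Theta$ also has vanishing $(2,0)_L$-part (it is the complex conjugate of the $(0,2)_L$-part), hence it is of pure type $(1,1)$ with respect to $L$. As $L$ ranges over the whole twistor family, this yields the desired $\operatorname{SU}(2)$-invariance. The main delicate point is the identification $\nabla^{Ob,(0,1)}=\bar\partial_L$ on $K_{(M,L)}$, which rests on torsion-freeness combined with integrability of every $L$; the rest of the argument is essentially linear algebra and bookkeeping about the quaternionic frame bundle.
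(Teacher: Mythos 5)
Your proposal is correct and follows the same overall strategy as the paper: trivialize $K_\R$ by the nowhere-vanishing $q$-real section $\Omega^n$ of a compatible hyper-Hermitian metric, write $\nabla^{Ob}\Omega^n=\alpha\otimes\Omega^n$, and conclude $\Theta=d\alpha$ is exact; then deduce $\operatorname{SU}(2)$-invariance from the $(1,1)$-type of $\Theta$ with respect to the complex structures of the twistor family. The one genuine difference is in how the $(1,1)$-type is obtained: the paper simply asserts that $\Theta$ coincides with the Ricci form of the Obata connection on $\operatorname{TM}$ and is therefore $(1,1)$ for $L=I,J,K$ (implicitly invoking the known fact that the Obata curvature takes values in $\mathfrak{gl}(n,\mathbb H)$, whose complex trace is the same real form for every $L$), whereas you give a self-contained argument by identifying the $(0,1)_L$-part of the induced Obata connection on $K_{(M,L)}$ with $\bar\partial_L$ — valid because $\nabla^{Ob}$ is torsion-free and preserves the integrable $L$ — so that the $(0,2)_L$-curvature vanishes, and reality kills the $(2,0)_L$-part. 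This buys you a proof that does not rely on the external computation of the Obata Ricci tensor, at the cost of having to justify the canonical identification $K_{(M,L)}\cong K_\R\otimes_\R\C$ for every $L$ in the twistor sphere (which does hold, since the complex trace of a quaternionic-linear endomorphism is independent of the chosen $L$ and real). Both arguments are sound; note also that being $(1,1)$ for the three structures $I,J,K$ already forces $(1,1)$ for the whole twistor family, so the paper's weaker-looking statement suffices.
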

 \begin{proof}
Since $K_{(M,I)}=K_\R \otimes \C$, $\Theta$ is the curvature of the Obata connection on the canonical bundle of $(M,I)$. Fixing any hyper-Hermitian metric $g$, $K_{(M,I)}$ admits a non-vanishing section $\Omega^n$ as above, and $\nabla^{Ob}_{K{(M,I)}} \Omega^n= \alpha  \otimes \Omega^n$,  where $\alpha=\gamma + \overline{\gamma}$, $\gamma \in \Omega^{1,0} M$ and $K_{(M,I)}$ is the canonical bundle obtained as a complexification of $K_\R$. Then, 
 \begin{equation} \label{eqn:c1}
 \Theta= d \alpha + \alpha \wedge \alpha =d \alpha
 \end{equation} 
 coincides with the Ricci of the Obata connection on $\operatorname{TM}$, and, therefore, it is $(1,1)$ with respect to any complex structure $L = I, J, K$ , i.e., it is $\operatorname{SU}(2)$-invariant. Since $\Omega$ is a non-degenerate $(2,0)$ form, $c_1(M,I)=0$ and  $\Theta$ is exact. 
\end{proof} 
We recall the following Lemma
\begin{lemma}
Let $(V,I,J,K)$ be a real vector space equipped with a quaternionic action and let $\eta \in \bigwedge^{1,1}_I V$ be a real $\operatorname{SU}(2)$-invariant $(1,1)$ form. Then the eigenvalues of $\eta$ occur in pairs $\sigma_i, \ -\sigma_i$, where the eigenvalues are meant in the sense of eigenvalues of a skew-symmetric endomorphism. 
\end{lemma}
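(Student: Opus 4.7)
The plan is to realize $\eta$ as a skew-symmetric, quaternion-linear endomorphism of $V$, diagonalize it over the complex structure $I$, and then use $J$ to pair up eigenvalues with opposite sign. Fix any hyper-Hermitian inner product $g$ on $V$ compatible with $I$, $J$, $K$---one exists, for instance by averaging an arbitrary Euclidean inner product on $V$ over the compact group $\operatorname{Sp}(1)$. Define $A\in\operatorname{End}(V)$ by $\eta(X,Y)=g(AX,Y)$. Antisymmetry of $\eta$ forces $A^{\ast}=-A$ with respect to $g$, so the $\sigma_j$ appearing in the skew-symmetric canonical form of $A$ are, by construction, the eigenvalues of $\eta$ in the sense of the statement.

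Next, I would translate the $\operatorname{SU}(2)$-invariance of $\eta$ into the commutation relations $AL=LA$ for $L=I,J,K$. Since $g(L\cdot,L\cdot)=g(\cdot,\cdot)$ and $\eta(LX,LY)=\eta(X,Y)$, the identity $g(ALX,LY)=g(AX,Y)$ gives $L^{\ast}AL=A$, and combined with $L^{\ast}=L^{-1}=-L$ this yields $AL=LA$. Now view $(V,I)$ as a complex vector space equipped with the Hermitian form $h(X,Y)=g(X,Y)-i\,g(X,IY)$. The relations $AI=IA$ and $A^{\ast}=-A$ together imply $h(AX,Y)=-h(X,AY)$, so $A$ is skew-Hermitian on $(V,I,h)$, and hence admits an $h$-orthonormal eigenbasis with purely imaginary eigenvalues $i\sigma_{1},\dots,i\sigma_{2n}$, $\sigma_j\in\mathbb{R}$.

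Finally, $J$ produces the sign-pairing. Because $JI=-IJ$, the real linear map $J$ is $\mathbb{C}$-antilinear on $(V,I)$, i.e.\ $J(\lambda v)=\overline{\lambda}\,Jv$ for $\lambda\in\mathbb{C}$. From $AJ=JA$, if $Av=i\sigma v$ with $v\neq 0$, then
\[
A(Jv)=J(Av)=J(i\sigma v)=-i\sigma\,Jv,
\]
and $Jv\neq 0$ since $J$ is invertible. Thus $J$ restricts to a $\mathbb{C}$-antilinear isomorphism from the $i\sigma$-eigenspace of $A$ onto the $(-i\sigma)$-eigenspace, matching their dimensions, so the multiset $\{\sigma_{1},\dots,\sigma_{2n}\}$ is invariant under $\sigma\mapsto-\sigma$: this is exactly the pairing $\sigma_i,-\sigma_i$ claimed.

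The main point requiring care---rather than a true obstacle---is to interpret the $\sigma_j$ unambiguously. Diagonalizing $A$ on $(V,I)$ rather than on the full complexification $V\otimes\mathbb{C}$ canonically selects one representative from each complex conjugate pair of eigenvalues, so that distinct $\sigma_j$ and $-\sigma_j$ are genuinely paired rather than tautologically identified. The kernel of $A$, being $J$-invariant, is automatically even-dimensional and contributes only the trivial pair $0,0$.
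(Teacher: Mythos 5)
Your proof is correct and takes essentially the same route as the paper's: diagonalize with respect to $I$, then use the fact that $J$ anticommutes with $I$ while preserving $\eta$ to pair the eigenvalues with opposite signs. The only difference is presentational --- you phrase the argument in terms of the associated skew-Hermitian endomorphism $A$ and the $\mathbb{C}$-antilinear map $J:E_{i\sigma}\to E_{-i\sigma}$, whereas the paper diagonalizes the $2$-form directly in a unitary coframe and observes that $J$ sends each elementary positive $(1,1)$-form $\varphi^j\wedge I\varphi^j$ to a negative one.
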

\begin{proof}
Let us take the unitary basis $\{\psi_1=\varphi_1 + i I \varphi_1,\dots,\psi_{2n}=\varphi_{2n}+i I \varphi_{2n}\}$ of $V^{{1,0}^{*}}$ which diagonalizes $\eta$, i.e., such that \
\[
\eta= i \sum_{j=1}^{2n} \lambda_j \psi^{j} \wedge \psi^{\overline{j}}=2  \sum_{j=1}^{2n} \lambda_j \varphi^{j} \wedge I \varphi^{j}=J \eta= -2  \sum_{j=1}^{2n} \lambda_j J\varphi^{j} \wedge I(J \varphi^{j}).
\]
Since $J$ takes the positive form $ \varphi^{j} \wedge I \varphi^{j}$ in the negative form $-J\varphi^{j} \wedge I(J \varphi^{j})$ and since $\eta$ and $J\eta=\eta$ have the same eigenvalues, they must occur with opposite signs.
\end{proof}
Let $r$ be the rank of $\Theta$ at a general point of $M$. Since by previous Lemma the eigenvalues of $\Theta$ go in pairs, then $r < 2n$. In fact, if $r=2n$, then $\Theta^{2n}$ would be a positive $(2n,2n)$ form if $n$ is even and a negative $(2n,2n)$ form otherwise, implying that
\[
\int_{M} \Theta^{2n} > 0 \ \text{in the first case, \ and} \ \int_{M} \Theta^{2n} < 0 \ \text{in the second one},
\]
which is impossible, as $\Theta$ is exact.
\begin{definition}
The sub-sheaf $\ker(\Theta) \subset \operatorname{TM}$  is involutive, as $d\Theta=0$. We call $\ker(\Theta)$ the \emph{Ricci Foliation} of an hypercomplex manifold $(M,I,J,K)$.
\end{definition}
 \begin{remark}
 The leaves of the Ricci foliation are always hypercomplex manifolds with restricted Obata holonomy contained in $\operatorname{SL}(n,\mathbb{H})$. 
 \end{remark}
\section{Ricci foliation of strong HKT manifolds} \label{section5}
Let us consider an HKT manifold $(I,J,K,g)$. It has been shown in \cite{IP2} that the Ricci tensor of the Obata connection coincides with the differential of the Lee form, and hence, the Obata Ricci curvature of $K_\R$ satisfies $\Theta=d\theta$. In this case the Ricci foliation  is hence given by the kernel of $d\theta$. 
\begin{proposition}	\label{prop:dtheta} 
Let $(M,I,J,K,g)$ be a compact simply connected strong HKT manifold. If there exists a point $p$ and an open neighborhood $\cal{U}$ of $p$ on which $d\theta=0$, then the manifold is hyper-K\"ahler. 
\end{proposition}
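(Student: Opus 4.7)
The plan is to reduce the claim to showing that the Bismut-parallel Killing vector field $V = \tfrac{1}{2}(\theta^\sharp - \operatorname{grad} f)$ supplied by Proposition~\ref{prop:V} vanishes identically. Since $V$ is Bismut-parallel, $|V|^2$ is constant on $M$, so it suffices to produce a single zero of $V$; once $V \equiv 0$, Proposition~\ref{prop:BHE} applied to each of $(I,g)$, $(J,g)$, $(K,g)$ forces each Hermitian structure to be K\"ahler, whence $(I,J,K,g)$ is hyper-K\"ahler.

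First I would argue locally. Shrinking $\cal{U}$ to a simply connected open ball around $p$, the hypothesis $d\theta=0$ gives $\theta=dh$ for some $h\in C^\infty(\cal{U})$, and combining this with the identity $V^\flat=\tfrac{1}{2}(\theta-df)$ from Proposition~\ref{prop:BHE} yields $V^\flat=\tfrac{1}{2}d(h-f)$ on $\cal{U}$. Thus $V$ is a gradient vector field on $\cal{U}$; being globally Killing, the $(0,2)$-tensor $\nabla^{LC}V^\flat$ is then simultaneously symmetric (by the gradient property) and skew-symmetric (by the Killing equation), so $\nabla^{LC}V=0$ on $\cal{U}$. Combined with $\nabla^B V=0$ and the standard comparison $\nabla^B_X V-\nabla^{LC}_X V=\tfrac{1}{2}H(X,V,\cdot)^\sharp$, this is equivalent to $\iota_V H=0$ on $\cal{U}$.

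The main step, which I expect to be the principal obstacle, is propagating $\nabla^{LC}V=0$ (equivalently, $d\theta=0$) from $\cal{U}$ to all of $M$. My plan is to invoke real analyticity of the strong HKT structure: the BHE equations satisfied by each Hermitian structure $(L,g)$ form an elliptic system modulo diffeomorphism, so by standard elliptic regularity (of DeTurck--Kazdan type) the metric $g$ is real analytic in harmonic coordinates, and the integrability and Hermitian compatibility of $I,J,K$ with $g$ then force $I,J,K$, and hence the Bismut-parallel Killing field $V$, to be real analytic. Unique continuation for the real-analytic tensor $\nabla^{LC}V$ on the connected manifold $M$ then forces it to vanish identically.

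Once $\nabla^{LC}V=0$ globally, the one-form $V^\flat$ is closed, and simple connectedness of $M$ yields $V^\flat=d\psi$ for some $\psi\in C^\infty(M)$, so $V=\operatorname{grad}\psi$. Since $M$ is compact, $\psi$ attains its maximum at some $q\in M$, where $V(q)=\operatorname{grad}\psi(q)=0$; constancy of $|V|$ forces $V\equiv 0$, completing the reduction.
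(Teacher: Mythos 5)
Your overall strategy coincides with the paper's: both proofs propagate the local vanishing of $d\theta$ to all of $M$ by real analyticity plus unique continuation, and then conclude from the exactness of the Lee form on a compact simply connected manifold. (Your local observation that $V$ is simultaneously Killing and a gradient on $\cal{U}$, hence $\nabla^{LC}V=0$ there, is a correct but unnecessary detour; since $dV^\flat=\tfrac12 d\theta$, propagating $\nabla^{LC}V=0$ is literally the same as propagating $d\theta=0$, which is what the paper does directly.)

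The genuine gap is in your justification of real analyticity, which you yourself identify as the principal obstacle. The paper does \emph{not} argue via elliptic regularity of the BHE equations: it uses the fact that a strong HKT structure admits a local HKT potential $\mu$ satisfying $dd^Id^Jd^K\mu=0$, and invokes \cite[Lemma 9.4]{MV2} to conclude that $\Omega$, and hence $\theta$, is real analytic. Your proposed DeTurck--Kazdan route is not adequate as stated: it is not established that the BHE system (CYT plus pluriclosed) is elliptic modulo diffeomorphism, and even if one granted real analyticity of $g$ in harmonic coordinates, the Lee form $\theta=J\delta\omega$ depends on the complex structure as well, and ``integrability and Hermitian compatibility'' do not by themselves force $I,J,K$ to be real analytic in those coordinates. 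The correct source of analyticity here is hypercomplex/HKT-specific (the Obata connection and the quaternionic potential theory), and without it your unique continuation step does not go through. On the other hand, your endgame is a correct and genuinely different (and rather pleasant) alternative to the paper's citation of \cite[Corollary 5.3]{IP}: once $d\theta=0$ globally, the parallel field $V$ has closed dual one-form, which is exact by simple connectedness, so $V$ vanishes at a critical point of its potential and hence identically by constancy of $\norm{V}$, forcing each $(L,g)$ to be K\"ahler by Proposition \ref{prop:BHE}. This keeps the argument entirely within the soliton machinery of Section \ref{section2} instead of appealing to the Ivanov--Papadopoulos vanishing theorem.
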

\begin{proof}
Locally, any strong HKT metric $\Omega$ has a potential $\mu$ which satisfies $dd^Id^Jd^K\mu=0$. In particular, $\mu$ is real analytic and so $\Omega$ is real analytic (see \cite[Lemma 9.4]{MV2}), where $\Omega$ is the $(2,0)$ form defining the strong HKT structure. Therefore, $\theta$ is real analytic and so $d\theta$ must vanish everywhere. Since $M$ is simply connected, the result follows by \cite[Corollary 5.3]{IP}.
\end{proof}
We recall the following well known 
\begin{lemma} \label{lem:useful}
Let $(M,J,g)$ be an Hermitian manifold. Then, for any $X,Y,Z \in  \Gamma(TM)$,
\[
H(X,Y,Z)=H(JX,JY,Z)+H(JX,Y,JZ)+H(X,JY,JZ).
\]
\end{lemma}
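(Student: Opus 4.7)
The plan is to recognise the identity as a pointwise, $J$-equivariant statement about the bidegree type of $H$, and then to verify the two ingredients separately.

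First I would show that $H$ has vanishing $(3,0)+(0,3)$ component with respect to $J$. Since $\omega$ is a real $(1,1)$-form and $J$ is integrable, the exterior derivative $d\omega$ lies in $\Omega^{2,1}(M)\oplus \Omega^{1,2}(M)$; in particular $(d\omega)^{3,0}=(d\omega)^{0,3}=0$. From the excerpt one has $H(X,Y,Z)=d\omega(JX,JY,JZ)$, and the operation $\alpha\mapsto \alpha(J\cdot,J\cdot,J\cdot)$ preserves bidegree (it acts on $\Omega^{p,q}$ as multiplication by $i^{p-q}$), so $H$ also lies in $\Omega^{2,1}(M)\oplus \Omega^{1,2}(M)$.

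Next I would prove the following pointwise linear-algebra statement: any real $3$-form $\alpha$ on a complex vector space $(V,J)$ whose $(3,0)$ and $(0,3)$ parts vanish satisfies
\[
\alpha(X,Y,Z)=\alpha(JX,JY,Z)+\alpha(JX,Y,JZ)+\alpha(X,JY,JZ).
\]
By linearity it is enough to check this on a generator $\alpha=\xi\wedge\bar\eta$ of $\Omega^{2,1}$, with $\xi\in\Omega^{2,0}$ and $\bar\eta\in\Omega^{0,1}$, using the eigenvalue rules $\zeta(JX)=i\zeta(X)$ for $\zeta\in\Omega^{1,0}$ and $\bar\zeta(JX)=-i\bar\zeta(X)$ for $\bar\zeta\in\Omega^{0,1}$. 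Expanding the three terms on the right-hand side via the wedge formula, the coefficients of $\xi(X,Y)\bar\eta(Z)$, $\xi(X,Z)\bar\eta(Y)$ and $\xi(Y,Z)\bar\eta(X)$ add up to $1,-1,1$ respectively, reproducing $\alpha(X,Y,Z)$. The $(1,2)$-case reduces to the $(2,1)$-case by complex conjugation since $\alpha$ is real. Applying this to $\alpha=H$ yields the lemma.

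The only real obstacle is bookkeeping: keeping consistent sign conventions for $d^c$, for the action of $J$ on forms of each bidegree, and for the wedge product, so as not to produce a spurious variant of the identity. A more elementary alternative would avoid bidegree decomposition altogether and verify the equivalent statement $d\omega(JX,JY,JZ)=d\omega(JX,Y,Z)+d\omega(X,JY,Z)+d\omega(X,Y,JZ)$ directly from Cartan's formula together with $\omega(JA,JB)=\omega(A,B)$, but this route is computationally heavier.
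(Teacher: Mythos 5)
Your argument is correct: the identity $\alpha(X,Y,Z)=\alpha(JX,JY,Z)+\alpha(JX,Y,JZ)+\alpha(X,JY,JZ)$ characterizes real $3$-forms with vanishing $(3,0)+(0,3)$ part, and $H=d\omega(J\cdot,J\cdot,J\cdot)$ has this property because $d\omega\in\Omega^{2,1}\oplus\Omega^{1,2}$ for integrable $J$ and the operator $\alpha\mapsto\alpha(J\cdot,J\cdot,J\cdot)$ preserves type. The paper itself gives no proof, deferring to \cite[Lemma 2.1]{BPT}, and your type-decomposition argument is precisely the standard one used there, so nothing further is needed.
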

A proof can be found in \cite[Lemma 2.1]{BPT}. \par 

\begin{remark} \label{rmk:useful}
If $(M,I,J,K,g)$ is a compact (non-hyper-K\"ahler) strong HKT we have already noticed that $(I,g)$, $(J,g)$, $(K,g)$ are BHE structures with the same soliton potential $f$, namely, we the vector fields $V_I,V_J,V_K$ defined in Proposition \ref{prop:BHE} coincide (see Proposition \ref{prop:V}). In particular, we get for free the following useful properties:
\begin{enumerate}[label=\arabic*.]
\item[1)] $V,IV,JV,KV$ are non zero, Killing and Bismut parallel, 
\item[2)]  $\cal{L}_V I=\cal{L}_V J=\cal{L}_V K=0$,
\item[3)] $\cal{L}_{IV} I=\cal{L}_{JV} J=\cal{L}_{KV} K=0$,
\item[4)] $\iota_VdV^\flat=\iota_{IV}dV^\flat=\iota_{JV}dV^\flat=\iota_{KV}dV^\flat=0$. Furthermore, $dV^\flat$ is $\operatorname{SU}(2)$-invariant 
\item[5)] $\iota_VdIV^\flat=\iota_{IV}dIV^\flat =0$  and $dIV^\flat$ is $(1,1)$ with respect to $I$, 
\item[6)]  $\iota_VdJV^\flat=\iota_{JV}dJV^\flat=0$ and $dJV^\flat$ is $(1,1)$ with respect to $J$,
\item[7)] $\iota_VdKV^\flat=\iota_{KV}dKV^\flat=0$ and $dKV^\flat$  is $(1,1)$ with respect to $K$,
\item[8)]  $\iota_Vdf=\iota_{IV}df=\iota_{JV}df=\iota_{KV}df=0$.
\end{enumerate}
\end{remark}

From   properties  1),  2) and 3),  we can prove the following

\begin{theorem}\label{prop:new}
Let $(M,I,J,K,g)$ be a compact strong HKT manifold which is not hyper-K\"ahler.  If  the distribution spanned by $\{V, IV, JV, KV\}$ is involutive,  then its Lie algebra is isomorphic to $\mathfrak{u}(1)\oplus\mathfrak{su}(2)$ or $\mathbb{R}^4$.
\end{theorem}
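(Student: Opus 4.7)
The plan is to determine $\mathfrak{g}$ by computing all six Lie brackets of $\{V, IV, JV, KV\}$ explicitly, using only the Bismut-parallelism and Killing properties collected in Remark~\ref{rmk:useful}. Two identities underlie the whole argument: since $V$ is Killing and Bismut-parallel, a short Koszul calculation gives $dV^\flat = \iota_V H$; and since any two vector fields among $V, IV, JV, KV$ are Bismut-parallel, their Lie bracket satisfies $g([X,Y],Z) = -H(X,Y,Z)$ for every $Z \in \Gamma(\operatorname{TM})$.

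The first step is to prove that $V$ is central. Property~4 of Remark~\ref{rmk:useful} gives $\iota_{IV}dV^\flat = \iota_{JV}dV^\flat = \iota_{KV}dV^\flat = 0$, so via $dV^\flat = \iota_V H$ we obtain $H(V, LV, W) = 0$ for all $W \in \Gamma(\operatorname{TM})$ and $L \in \{I, J, K\}$, not merely for $W$ tangent to the distribution. The second identity above then yields $[V, IV] = [V, JV] = [V, KV] = 0$.

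Next I would compute the remaining brackets. For $[IV, JV] = -T^B(IV, JV)$, pairing with $V$ vanishes by the centrality just established, and pairing with $IV$ or $JV$ vanishes by the total antisymmetry of $H$. Involutivity of the distribution therefore forces $[IV, JV]$ to be proportional to $KV$, say $[IV, JV] = a \cdot KV$ with $a \in C^\infty(M)$. Because $[IV, JV]$ is Killing (as the bracket of two Killing fields) and $KV$ is a nowhere-vanishing Killing vector of constant norm $|V|$, the Killing condition $\mathcal{L}_{aKV} g = da \otimes KV^\flat + KV^\flat \otimes da = 0$ forces $a$ to be constant. The same reasoning yields $[JV, KV] = b \cdot IV$ and $[KV, IV] = c \cdot JV$ with $b, c \in \R$. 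Total antisymmetry of $H$ gives the cyclic invariance $H(IV,JV,KV) = H(JV,KV,IV) = H(KV,IV,JV)$, and reading off the structure constants via $a|V|^2 = -H(IV,JV,KV)$ (and its cyclic analogues) forces $a = b = c =: \lambda$.

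The conclusion is then immediate: if $\lambda = 0$, all nontrivial brackets vanish and $\mathfrak{g} \cong \R^4$; if $\lambda \neq 0$, rescaling $IV, JV, KV$ by $|\lambda|^{-1/2}$ (with a sign flip on one generator in case $\lambda < 0$) brings the cyclic brackets into the canonical $\mathfrak{su}(2)$ form, so $\mathfrak{g} \cong \mathfrak{u}(1) \oplus \mathfrak{su}(2)$ with $V$ spanning the $\mathfrak{u}(1)$ factor. I expect the main obstacle to be the Killing/constant-norm argument promoting the a priori smooth function $a$ to a genuine constant; once that step is executed, the remainder is a direct identification of a four-dimensional Lie algebra from its (cyclic) structure constants.
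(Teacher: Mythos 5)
Your proposal is correct, and its skeleton coincides with the paper's: centrality of $V$ via the properties in Remark~\ref{rmk:useful}, the identity $g([X,Y],Z)=-H(X,Y,Z)$ for Bismut-parallel $X,Y$ combined with involutivity to get $[IV,JV]=a\,KV$ and its cyclic analogues with a common coefficient, constancy of $a$, and then the dichotomy $\mathbb{R}^4$ versus $\mathfrak{u}(1)\oplus\mathfrak{su}(2)$. The genuine difference lies in the one step that carries all the work, namely proving that $a$ is constant. The paper does this by writing $d(JV^\flat)=-a\,KV^\flat\wedge IV^\flat+\beta_J$ with $\beta_J$ horizontal, applying $d^2=0$ and evaluating on suitable triples to kill $da$ in the horizontal directions, and then invoking the Jacobi identity for the vertical directions. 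You instead observe that $[IV,JV]$ is Killing (bracket of Killing fields) and that $\mathcal{L}_{aKV}g=da\odot KV^\flat$, which vanishes only if $da=0$ since $KV$ is nowhere zero; contracting first with $(KV,KV)$ and then with $(X,KV)$ makes this airtight. This is shorter, more elementary, and avoids the horizontal/vertical case analysis entirely; what the paper's longer computation buys is the explicit decomposition of $d(LV^\flat)$ into vertical and horizontal pieces, which is reused heavily in the proof of Theorem~\ref{thm:main} and in Section~\ref{section7}, so it is not wasted effort there even though it is overkill for this statement alone. Two cosmetic points: your derivation of $[V,LV]=0$ from $\iota_{LV}dV^\flat=0$ is valid but the paper gets it more directly from $\mathcal{L}_VI=\mathcal{L}_VJ=\mathcal{L}_VK=0$; and in the final normalization, rescaling by $\lambda^{-1}$ (rather than $|\lambda|^{-1/2}$) puts the brackets in the standard cyclic form at once, though either way the resulting $3$-dimensional algebra has negative definite Killing form and is therefore $\mathfrak{su}(2)$.
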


\begin{proof}
Since the vector fields $V, IV, JV, KV$ are Bismut-parallel, they have constant, non-zero norm; up to rescaling, we may assume that each of them has unit length. We thus obtain an orthogonal decomposition
\[
TM = \mathcal{F} \oplus^\perp \mathcal{F}^\perp,
\]
where $\mathcal{F} = \mathrm{span}\{V, IV, JV, KV\}$ and $\mathcal{F}^\perp$ denotes the horizontal distribution.

By statement~3) of Remark~\ref{rmk:useful}, we have
\[
\mathcal{L}_V I = \mathcal{L}_V J = \mathcal{L}_V K = 0,
\]
which implies
\[
[V, IV] = [V, JV] = [V, KV] = 0.
\]
Together with the identities
\[
\nabla^B V = \nabla^B IV = \nabla^B JV = \nabla^B KV = 0,
\]
this yields
\begin{align} \label{eqn:computation}
g([IV, JV], V) 
&= g\bigl(\nabla^B_{IV} JV - \nabla^B_{JV} IV - T^B(IV,JV), V\bigr)
 = - H(IV, JV, V) = 0, \\
g([IV, JV], IV) &= - H(IV, JV, IV) = 0, \nonumber \\
g([IV, JV], JV) &= - H(IV, JV, JV) = 0, \nonumber \\
g([IV, JV], KV) &= - H(IV, JV, KV). \nonumber
\end{align}
It follows that
$[IV, JV] = a\, KV,$
where 
\begin{equation} \label{def:a}
 a = - H(IV, JV, KV).
\end{equation}
By the same argument, one obtains
\[
[JV, KV] = a\, IV,
\qquad
[KV, IV] = a\, JV.
\]

Finally, since $V, IV, JV, KV$ are Bismut parallel and, for any $L\in\{I,J,K\}$, the exterior derivative $d(LV^\flat)$ coincides with the skew-symmetric part of $\nabla^{LC} LV^\flat$, we obtain the relations
\begin{equation} \label{eqn:diff}
dV^\flat = \iota_V H, 
\qquad 
d(IV^\flat) = \iota_{IV} H, 
\qquad 
d(JV^\flat) = \iota_{JV} H,
\qquad 
d(KV^\flat) = \iota_{KV} H.
\end{equation}
Using the identity $d(JV^\flat)=\iota_{JV} H$, we show that
\begin{equation} \label{eqn:dJV}
d(JV^\flat) = - a\, KV^\flat \wedge IV^\flat + \beta_J,
\end{equation}
where $a\in \mathcal{C}^\infty(M)$ and $\beta_J$ is a horizontal $2$-form, namely $
\beta_J \in \bigwedge\nolimits^2 \mathcal{F}^\perp .$

Indeed, since $d(JV^\flat)=\iota_{JV} H$, for any vector field $X\in \Gamma(TM)$ we compute
\[
\begin{aligned}
d(JV^\flat)(V,X)
&= H(JV,V,X)
 = - g([JV,V],X) = 0, \\
d(JV^\flat)(IV,X)
&= H(JV,IV,X)
 = - g([JV,IV],X)
 = a\, g(KV,X), \\
d(JV^\flat)(JV,X)
&= H(JV,JV,X) = 0, \\
d(JV^\flat)(KV,X)
&= H(JV,KV,X)
 = - g([JV,KV],X)
 = - a\, g(IV,X),
\end{aligned}
\]
where we used the commutation relations established in \eqref{eqn:computation}.  
These identities imply the decomposition \eqref{eqn:dJV}. \par
Applying the exterior differential to both sides of \eqref{eqn:dJV}, we obtain
\[
0 = d^2(JV^\flat)
= - da \wedge KV^\flat \wedge IV^\flat
 - a\, d(KV^\flat) \wedge IV^\flat
 + a\, KV^\flat \wedge d(IV^\flat)
 + d\beta_J .
\]
Hence,
\begin{equation}\label{eqn:da}
da \wedge KV^\flat \wedge IV^\flat
= - a\, d(KV^\flat) \wedge IV^\flat
  + a\, KV^\flat \wedge d(IV^\flat)
  + d\beta_J .
\end{equation}

By statements~5) and~7) of Remark~\ref{rmk:useful}, we have
\[
\iota_{IV} d(IV^\flat) = 0,
\qquad
\iota_{KV} d(KV^\flat) = 0.
\]
Evaluating \eqref{eqn:da} on the triple $(KV, IV, Z)$, with $Z\in \mathcal{F}^\perp$, we obtain
\[
da(Z) = d\beta_J(KV,IV,Z).
\]
Since $\beta_J$ is horizontal and $[KV,IV]=a\, JV$, we compute
\[
\begin{aligned}
d\beta_J(KV,IV,Z)
&= \mathcal{L}_{KV}\bigl(\beta_J(IV,Z)\bigr)
 - \mathcal{L}_{IV}\bigl(\beta_J(KV,Z)\bigr)
 - \mathcal{L}_{Z}\bigl(\beta_J(KV,IV)\bigr) \\
&\quad - \beta_J([KV,IV],Z)
 + \beta_J([KV,Z],IV)
 + \beta_J([IV,Z],KV) \\
&= 0,
\end{aligned}
\]
and therefore $da(Z)=0$ for all $Z\in \mathcal{F}^\perp$.

It remains to show that $da$ vanishes on $\mathcal{F}$.  
Applying the Jacobi identity to the triple $(V,IV,KV)$ and using statement~$2)$ of Remark~\ref{rmk:useful}, we obtain
\[
0 = [[V,IV],KV] + [[IV,KV],V] + [[KV,V],IV]
   = [a\, JV, V]
   = V(a)\, JV,
\]
which implies $da(V)=0$.
Similarly, applying the Jacobi identity to $(IV,JV,KV)$ yields
\[
da(IV)\, IV + da(JV)\, JV + da(KV)\, KV = 0,
\]
and hence $da(IV)=da(JV)=da(KV)=0$. Consequently, $a$ is constant.

Therefore, the Lie subalgebra of $\Gamma(TM)$ spanned by $\{V, IV, JV, KV\}$ is isomorphic to $\mathbb{R}^4$ if $a=0$, and to $\mathfrak{u}(1)\oplus\mathfrak{su}(2)$ otherwise.

\end{proof}

The following theorem  gives  some structural result about compact simply connected $8$-dimensional strong HKT manifolds.

\begin{theorem}\label{thm:main}
Let $(M,I,J,K,g)$ be a compact, simply connected, $8$-dimensional strong HKT manifold which is not hyper-K\"ahler. Then:
\begin{enumerate}[label=\arabic*.]
\item  the distribution spanned by $\{V, IV, JV, KV\}$ is involutive, and its Lie algebra is isomorphic to $\mathfrak{u}(1)\oplus\mathfrak{su}(2)$;
\item the soliton potential $f$ is constant.
\item the vector fields $IV$, $JV$ and $KV$ satisfy
\[
\mathcal{L}_{IV}\omega_J = -\omega_K,\qquad
\mathcal{L}_{JV}\omega_K = -\omega_I,\qquad
\mathcal{L}_{KV}\omega_I = -\omega_J;
\]

\end{enumerate}
\end{theorem}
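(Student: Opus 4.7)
The plan is to exploit the reduction of the Bismut holonomy to $\mathrm{Sp}(1) \subset \mathrm{Sp}(2)$ in dimension $8$ (via Theorem \ref{thm:holonomy}), which produces the $\nabla^B$-parallel orthogonal decomposition $TM = \mathcal{F} \oplus \mathcal{F}^\perp$, where $\mathcal{F} = \mathrm{span}\{V, IV, JV, KV\}$ is a rank-$4$ trivialized parallel subbundle and $\mathcal{F}^\perp$ is a hypercomplex line bundle (quaternionic rank $1$). I would establish statement (3) first, since (1) and (2) follow from it with comparatively little extra work.

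To prove (3), I observe that $\nabla^B(IV) = 0$ together with $\nabla^B J = 0$ yields the pointwise formula $(\mathcal{L}_{IV} J)(Y) = J T^B(IV, Y) - T^B(IV, JY)$, so that, pairing with $Z$ via $g$, the identity $\mathcal{L}_{IV} J = -K$ becomes equivalent to the torsion identity
$$H(IV, JY, Z) + H(IV, Y, JZ) = \omega_K(Y, Z).$$
I verify this bilinear identity by splitting $Y, Z$ into their $\mathcal{F}$- and $\mathcal{F}^\perp$-components. The cases where at least one argument lies in $\{V, IV, JV, KV\}$ reduce, via the vanishing identities $H(V, LV, \cdot) = 0$ and the $(1,1)$-properties of the forms $d(LV)^\flat$ collected in Remark \ref{rmk:useful}, to identities on the scalar $H(IV, JV, KV)$ and the $1$-forms $H(IV, JV, \cdot)$, $H(JV, KV, \cdot)$, $H(KV, IV, \cdot)$ on $\mathcal{F}^\perp$. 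The purely horizontal case amounts to showing $J\,d(IV)^\flat|_{\mathcal{F}^\perp} = \omega_K|_{\mathcal{F}^\perp}$; here the dimension-$8$ hypothesis is crucial, as the space of $(1,1)_I$-forms on a $4$-dimensional hypercomplex vector space has real dimension $4$, and Lemma \ref{lem:useful} applied to $(I, g)$ and $(J, g)$ pins down the $J$-derivation of $d(IV)^\flat|_{\mathcal{F}^\perp}$ uniquely. The cyclic statements follow by permuting the roles of $I, J, K$.

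With (3) in hand, statement (1) is immediate: using $[V, IV] = 0$, one computes $[IV, JV] = (\mathcal{L}_{IV} J)(V) + J[IV, V] = -KV$, and cyclically, so $\mathcal{F}$ is involutive with structure constant $a = -1 \neq 0$ in the notation of Theorem \ref{prop:new}, forcing the Lie algebra to be $\mathfrak{u}(1) \oplus \mathfrak{su}(2)$. For (2), the Killing fields $V, IV, JV, KV$ integrate, by compactness of $M$, to an isometric action of a compact Lie group with this Lie algebra, preserving $f$. Since $\operatorname{grad} f \in \mathcal{F}^\perp$ (by $\iota_V df = \iota_{LV} df = 0$) and the $\mathrm{SU}(2)$-subaction acts on each fibre $\mathcal{F}^\perp_p \cong \mathbb{H}$ by quaternion conjugation (a consequence of (3)), the invariant gradient $\operatorname{grad} f$ is constrained to a $1$-dimensional subbundle; a Bochner-type argument applied to the steady pluriclosed soliton equation $\mathrm{Ric}^{LC} - \tfrac14 H^2 + \nabla^2 f = 0$, combined with compactness, then forces $\operatorname{grad} f \equiv 0$. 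The main obstacle I expect is the horizontal-horizontal case of the torsion identity in (3), where the rigidity of the quaternionic rank-$1$ bundle $\mathcal{F}^\perp$ is essential; in higher quaternionic dimension the analogous identity would require substantially more machinery, which is why the theorem is limited to dimension $8$.
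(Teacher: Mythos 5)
There is a genuine gap, and it sits at the heart of your plan. You correctly reduce $\mathcal{L}_{IV}\omega_J=-\omega_K$ to the torsion identity $H(IV,JY,Z)+H(IV,Y,JZ)=\omega_K(Y,Z)$, but you then assert that the horizontal--horizontal case is ``pinned down uniquely'' by Lemma \ref{lem:useful} and the four-dimensionality of $\mathcal{F}^\perp$. It is not. Writing $d(IV^\flat)|_{\mathcal{F}^\perp}=\beta_I=-\tfrac{b}{2}\omega_I^T+\eta_I$ with $\eta_I$ anti-self-dual, one computes $H(IV,JY,Z)+H(IV,Y,JZ)=-b\,\omega_K^T(Y,Z)$ for horizontal $Y,Z$, so your identity is \emph{equivalent} to $b=-1$; similarly the case $Y=V$, $Z=KV$ gives $H(IV,JV,KV)=-a$ on one side and $\|KV\|^2$ on the other, so it is equivalent to $a=-1$. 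Lemma \ref{lem:useful} together with self-duality only shows that all six Lie derivatives $\mathcal{L}_{LV}\omega_{L'}^T$ are multiples of the third transverse form by a \emph{common} constant $b$ — it gives no information about its value. Determining $(a,b)=(-1,-1)$ is precisely the content of the paper's proof and requires three global inputs your outline never supplies: the Lee-form computation $\theta=-(a+b)V^\flat+\theta^T$ compared against the soliton normalization $V^\flat=\tfrac12(\theta-df)$, yielding $a+b=-2$; the relation $b(b-a)=0$ from $d^2\omega_I^T=0$; and the exclusion of $(a,b)=(-2,0)$, which uses $dH=0$ to show $b=0$ would force $\|dV^\flat\|^2+\sum\|\eta_L\|^2=0$ and hence $d\theta\equiv 0$, contradicting the real-analyticity argument of Proposition \ref{prop:dtheta}. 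Since that exclusion runs through the proof of statement 2, your proposed order (prove 3 first, deduce 1 and 2) cannot be carried out as described.

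Your treatment of statement 2 is also too thin to count as a proof. The paper does not use a generic Bochner argument on the soliton equation; it uses the dilaton identity $\tfrac16\|H\|^2+\Delta f-\|\!\operatorname{grad} f\|^2=C$, an explicit decomposition of $\|H\|^2$ adapted to $\mathcal{F}\oplus\mathcal{F}^\perp$, and the evaluation of $dH=0$ on a horizontal frame to conclude that $\|\!\operatorname{grad} f\|^2$ is constant, after which the maximum principle applies. Your claim that $SU(2)$-invariance confines $\operatorname{grad} f$ to a line subbundle, even if established, does not force it to vanish. Statement 1, by contrast, you essentially get right modulo (3): the paper instead proves involutivity directly (a horizontal component of a bracket of sections of $\mathcal{F}$ would enlarge $\ker d\theta$ to all of $T_pM$ near a point, contradicting Proposition \ref{prop:dtheta} — this is where dimension $8$ and simple connectedness actually enter) and then invokes Theorem \ref{prop:new}; but since your route to (3) is incomplete, so is your route to (1).
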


\begin{proof}
$1.$ We first prove  $1.$  By statement~$4)$ in Remark~\ref{rmk:useful} we have
\[
d\theta = 2\, dV^\flat, \qquad 
\iota_V d\theta = \iota_{IV} d\theta = \iota_{JV} d\theta = \iota_{KV} d\theta = 0.
\]
Therefore, the vector fields $\{V, IV, JV, KV\}$ span a rank--$4$ distribution $\mathcal{F}\subset \ker(d\theta)$. We consider the orthogonal decomposition
\[
TM = \mathcal{F} \oplus \mathcal{F}^\perp
\]
with respect to the metric $g$.  
In the following, a tensor field $T$ will be called \emph{horizontal} if $\iota_X T = 0$ for all $X\in \mathcal{F}$.

We now show that $\mathcal{F}$ is involutive. Let $U,W$ be vector fields in $\mathcal{F}$ and decompose their Lie bracket as
\[
[U,W] = Z + X,
\]
where $Z\in \mathcal{F}$ and $X\in \mathcal{F}^\perp$. We claim that $X\equiv 0$.

Suppose by contradiction that there exists a point $p\in M$ such that $X_p\neq 0$. Since $U$ and $W$ belong to $\ker(d\theta)$ and $\ker(d\theta)$ is involutive, it follows that $[U,W]_p\in \ker(d\theta)_p$. As $Z_p\in \mathcal{F}_p\subset \ker(d\theta)_p$, we deduce that
\[
X_p = [U,W]_p - Z_p \in \ker(d\theta)_p.
\]
Thus $X_p$ lies in $\ker(d\theta)_p\cap \mathcal{F}^\perp_p$. Since $X_p$ is linearly independent from $\{V_p,IV_p,JV_p,KV_p\}$, this implies that $d\theta_p=0$.

By smoothness, there exists an open neighborhood $\mathcal{U}$ of $p$ such that $X_q\neq 0$ for all $q\in \mathcal{U}$. Consequently, $d\theta$ vanishes on $\mathcal{U}$, contradicting Proposition~\ref{prop:dtheta}. Hence $X\equiv 0$, and $\mathcal{F}$ is involutive.
\par
Therefore,  by Theorem \ref{prop:new} the Lie subalgebra of $\Gamma(TM)$ spanned by $\{V, IV, JV, KV\}$ is isomorphic to $\mathbb{R}^4$ if $a=0$, and to $\mathfrak{u}(1)\oplus\mathfrak{su}(2)$ otherwise, where $a$ is the constant defined in  \eqref{def:a}.

Using the same argument employed to compute
\[
d(JV^\flat) = -a\, KV^\flat \wedge IV^\flat + \beta_J,
\]
one similarly obtains
\[
d(KV^\flat) = -a\, IV^\flat \wedge JV^\flat + \beta_K,
\qquad
d(IV^\flat) = -a\, JV^\flat \wedge KV^\flat + \beta_I,
\]
where $\beta_I,\beta_J,\beta_K$ are horizontal $2$-forms. Moreover, since $d(IV^\flat)$, $d(JV^\flat)$ and $d(KV^\flat)$ are of type $(1,1)$ with respect to $I$, $J$ and $K$, respectively (see statements~$5)$--$7)$ of Remark~\ref{rmk:useful}), the forms $\beta_I$, $\beta_J$ and $\beta_K$ inherit the same property. 
For instance,
\[
\beta_I = d(IV^\flat) + a\, JV^\flat \wedge KV^\flat,
\]
and both terms on the right-hand side are of type $(1,1)$ with respect to $I$.

Since $V,IV,JV,KV$ form an orthonormal basis of $\mathcal{F}$, we may decompose the K\"ahler forms as
\[
\begin{aligned}
\omega_I &= V^\flat \wedge IV^\flat + JV^\flat \wedge KV^\flat + \omega_I^T, \\
\omega_J &= V^\flat \wedge JV^\flat + KV^\flat \wedge IV^\flat + \omega_J^T, \\
\omega_K &= V^\flat \wedge KV^\flat + IV^\flat \wedge JV^\flat + \omega_K^T,
\end{aligned}
\]
where $\omega_I^T,\omega_J^T,\omega_K^T$ are horizontal.

By Cartan’s formula,
\[
\mathcal{L}_{IV}\omega_J
= d(\iota_{IV}\omega_J) + \iota_{IV}(d\omega_J).
\]
For any vector field $X$,
\[
\iota_{IV}\omega_J(X)
= \omega_J(IV,X)
= g(JIV,X)
= - KV^\flat(X).
\]
Moreover,
\[
\begin{aligned}
\iota_{IV} d\omega_J(X,Y)
&= (\iota_{IV} JH)(X,Y)
= JH(IV,X,Y) \\
&= H(-JIV,JX,JY)
= H(KV,JX,JY) \\
&= H(KV,IX,IY)+H(V,IX,JY)+H(V,JX,IY),
\end{aligned}
\]
where in the last line we used Lemma~\ref{lem:useful}. Using the identities in \eqref{eqn:diff}, we obtain
\[
\begin{aligned}
\iota_{IV} d\omega_J(X,Y)
&= d(KV^\flat)(IX,IY)
 + dV^\flat(IX,JY)
 + dV^\flat(JX,IY) \\
&= d(KV^\flat)(IX,IY),
\end{aligned}
\]
where the last equality follows from the $\mathrm{SU}(2)$-invariance of
$dV^\flat=\tfrac12 d\theta$.
Therefore,
\begin{equation}\label{eqn:cartan}
\mathcal{L}_{IV}\omega_J
= I d(KV^\flat) - d(KV^\flat)
= a\,(V^\flat \wedge KV^\flat + IV^\flat \wedge JV^\flat)
  + I\beta_K - \beta_K.
\end{equation}
On the other hand, using the decomposition of $\omega_J$, we have
\begin{equation}\label{eqn:cartan2}
\mathcal{L}_{IV}\omega_J
= a\,(V^\flat \wedge KV^\flat + IV^\flat \wedge JV^\flat)
  + \mathcal{L}_{IV}\omega_J^T.
\end{equation}

Comparing \eqref{eqn:cartan} and \eqref{eqn:cartan2}, we obtain
\[
\mathcal{L}_{IV}\omega_J^T
= \iota_{IV} d\omega_J^T
= I\beta_K - \beta_K.
\]
Since $\beta_K$, and hence $I\beta_K$, are $(1,1)$-forms with respect to $K$, it follows that $\mathcal{L}_{IV}\omega_J^T$ is also of type $(1,1)$ with respect to $K$.

The horizontal distribution $\mathcal{F}^\perp$ has dimension $4$. Consequently, the bundle of horizontal $2$-forms decomposes as
\[
\bigwedge\nolimits^2 \mathcal{F}^\perp
= \bigwedge\nolimits^+ \mathcal{F}^\perp
\oplus
\bigwedge\nolimits^- \mathcal{F}^\perp,
\]
where the splitting is taken with respect to the horizontal metric $g^T$. It is straightforward to check that
\[
\bigwedge\nolimits^+ \mathcal{F}^\perp
= \mathrm{span}\{\omega_I^T,\omega_J^T,\omega_K^T\},
\]
whereas $\bigwedge\nolimits^- \mathcal{F}^\perp$ coincides with the space of horizontal $\mathrm{SU}(2)$-invariant $2$-forms.

In particular, the $2$-form $dV^\flat$ belongs to $\bigwedge^- \mathcal{F}^\perp$, since it is horizontal and $\mathrm{SU}(2)$-invariant.

Moreover, by Remark~\ref{rmk:useful}, statement~$1)$, the vector fields $V,IV,JV,KV$ are Killing. Hence, the corresponding flows preserve the Riemannian volume and the Hodge star operator associated with $g^T$. It follows that $\mathcal{L}_{IV}\omega_J^T$ is self-dual, and therefore belongs to $\bigwedge^+ \mathcal{F}^\perp$.

Since $\mathcal{L}_{IV}\omega_J^T$ is both self-dual and of type $(1,1)$ with respect to $K$, there exists a smooth function $b_{IJ}\in C^\infty(M)$ such that
\[
\mathcal{L}_{IV}\omega_J^T = b_{IJ}\,\omega_K^T.
\]

By analogous computations,  up to a cyclic permutation of the complex structures,  we obtain
\begin{equation}\label{eqn:beta}
\begin{aligned}
\mathcal{L}_{IV}\omega_J^T &= \iota_{IV} d\omega_J^T = I\beta_K - \beta_K = b_{IJ}\,\omega_K^T,\\
\mathcal{L}_{IV}\omega_K^T &= \iota_{IV} d\omega_K^T = -I\beta_J + \beta_J = b_{IK}\,\omega_J^T,\\
\mathcal{L}_{JV}\omega_I^T &= \iota_{JV} d\omega_I^T = -J\beta_K + \beta_K = b_{JI}\,\omega_K^T,\\
\mathcal{L}_{JV}\omega_K^T &= \iota_{JV} d\omega_K^T = J\beta_I - \beta_I = b_{JK}\,\omega_I^T,\\
\mathcal{L}_{KV}\omega_I^T &= \iota_{KV} d\omega_I^T = K\beta_J - \beta_J = b_{KI}\,\omega_J^T,\\
\mathcal{L}_{KV}\omega_J^T &= \iota_{KV} d\omega_J^T = -K\beta_I + \beta_I = b_{KJ}\,\omega_I^T.
\end{aligned}
\end{equation}
Since $\beta_I$, $\beta_J$ and $\beta_K$ are of type $(1,1)$ with respect to $I$, $J$ and $K$, respectively, we may decompose them as
\[
\beta_I = \lambda_I \,\omega_I^T + \eta_I,
\qquad
\beta_J = \lambda_J \,\omega_J^T + \eta_J,
\qquad
\beta_K = \lambda_K \,\omega_K^T + \eta_K,
\]
where $\eta_I,\eta_J,\eta_K$ are anti-self-dual horizontal $2$-forms and
$\lambda_I,\lambda_J,\lambda_K\in C^\infty(M)$.

We first show that
\[
b_{IJ}=b_{JK}=b_{KI}=b_{JI}=b_{KJ}=b_{IK}.
\]
Since the forms $\eta_L$ are $\mathrm{SU}(2)$-invariant, we compute
\begin{equation}\label{eqn:lambda}
I\beta_K - \beta_K
= -\lambda_K \omega_K^T + \eta_K - \lambda_K \omega_K^T - \eta_K
= -2\lambda_K \omega_K^T
= b_{IJ}\,\omega_K^T.
\end{equation}
Similarly,
\[
- J\beta_K + \beta_K
= 2\lambda_K \omega_K^T
= b_{JI}\,\omega_K^T,
\]
and hence $b_{IJ} = - b_{JI}$. By the same argument, one obtains
\[
b_{JK} = - b_{KJ},
\qquad
b_{KI} = - b_{IK}.
\]

On the other hand, for any horizontal vector fields $X,Y$ we have
\[
H(IV,X,Y)
= H(IV,JX,JY) + H(JIV,JX,Y) + H(JIV,X,JY),
\]
and therefore, using \eqref{eqn:diff},
\[
(\beta_I - J\beta_I)(X,Y)
= H(IV,X,Y) - H(IV,JX,JY)
= (J\beta_K - \beta_K)(X,JY).
\]
Contracting both sides of \eqref{eqn:beta} with $X$ and $Y$, we obtain
\[
-b_{JK}\, g(IX,Y)
= -b_{JK}\, \omega_I^T(X,Y)
= b_{IJ}\, \omega_K^T(X,JY)
= b_{IJ}\, g(KX,JY)
= -b_{IJ}\, g(IX,Y),
\]
which implies $b_{IJ}=b_{JK}$. By cyclic permutation, we also obtain
$b_{JK}=b_{KI}$.

Moreover, from \eqref{eqn:lambda} we deduce
\[
\lambda_I = \lambda_J = \lambda_K = -\frac{b}{2},
\]
where
\[
b := b_{IJ}=b_{JK}=b_{KI}=b_{JI}=b_{KJ}=b_{IK}.
\]

Summarizing, we obtain
\begin{equation}\label{eqn:beta1}
\begin{aligned}
\mathcal{L}_{IV}\omega_J^T &= \iota_{IV} d\omega_J^T = I\beta_K - \beta_K = b\,\omega_K^T,\\
\mathcal{L}_{IV}\omega_K^T &= \iota_{IV} d\omega_K^T = -I\beta_J + \beta_J = -b\,\omega_J^T,\\
\mathcal{L}_{JV}\omega_I^T &= \iota_{JV} d\omega_I^T = -J\beta_K + \beta_K = -b\,\omega_K^T,\\
\mathcal{L}_{JV}\omega_K^T &= \iota_{JV} d\omega_K^T = J\beta_I - \beta_I = b\,\omega_I^T,\\
\mathcal{L}_{KV}\omega_I^T &= \iota_{KV} d\omega_I^T = K\beta_J - \beta_J = b\,\omega_J^T,\\
\mathcal{L}_{KV}\omega_J^T &= \iota_{KV} d\omega_J^T = -K\beta_I + \beta_I = -b\,\omega_I^T.
\end{aligned}
\end{equation}

By Remark~\ref{rmk:useful}, statements 1), 2) and 3), we have
\[
\mathcal{L}_{V}\omega_I
=\mathcal{L}_{V}\omega_J
=\mathcal{L}_{V}\omega_K
=\mathcal{L}_{IV}\omega_I
=\mathcal{L}_{JV}\omega_J
=\mathcal{L}_{KV}\omega_K
=0.
\]
In particular, this implies
\[
\iota_V(d\omega_I^T)=\iota_{IV}(d\omega_I^T)=0,
\]
and analogous identities hold for $\omega_J^T$ and $\omega_K^T$.

Indeed, since $V,IV,JV,KV$ are Killing vector fields, we compute
\[
\begin{aligned}
0
&=\mathcal{L}_{V}\omega_I
=\mathcal{L}_{V}(V^\flat \wedge IV^\flat + JV^\flat \wedge KV^\flat + \omega_I^T)
=\mathcal{L}_{V}(\omega_I^T)
=\iota_V(d\omega_I^T),\\
0
&=\mathcal{L}_{IV}\omega_I
=\mathcal{L}_{IV}(V^\flat \wedge IV^\flat + JV^\flat \wedge KV^\flat + \omega_I^T)
=\mathcal{L}_{IV}(\omega_I^T)
=\iota_{IV}(d\omega_I^T).
\end{aligned}
\]

Combining these properties with equations~\eqref{eqn:beta1}, we deduce that
\[
\begin{aligned}
d\omega_I^T
&= b\, KV^\flat \wedge \omega_J^T - b\, JV^\flat \wedge \omega_K^T + \gamma_I,\\
d\omega_J^T
&= b\, IV^\flat \wedge \omega_K^T - b\, KV^\flat \wedge \omega_I^T + \gamma_J,\\
d\omega_K^T
&= b\, JV^\flat \wedge \omega_I^T - b\, IV^\flat \wedge \omega_J^T + \gamma_K,
\end{aligned}
\]
where $\gamma_I,\gamma_J,\gamma_K$ denote the purely horizontal components of
$d\omega_I^T,d\omega_J^T$ and $d\omega_K^T$, respectively.

Since the horizontal distribution $\mathcal{F}^\perp$ has dimension $4$, each
$\gamma_L$ must be of the form
\[
\gamma_L = \theta_L^T \wedge \omega_L^T,
\]
where $\theta_L^T$ is the transverse Lee form associated with $\omega_L^T$,
for $L\in\{I,J,K\}$.

We now compute the transverse Lee forms $\theta_I^T$, $\theta_J^T$ and
$\theta_K^T$. Recall that the Lee form is given by
\begin{equation}\label{eqn:theta}
\theta_I(X)=\theta(X)=\frac12\sum_{i=1}^8 H(e_i,Ie_i,IX),
\end{equation}
see for instance~\cite[Lemma~8.6]{GFS}, where $\{e_1,\dots,e_8\}$ is any local
orthonormal frame.

Without loss of generality, we choose the local orthonormal frame
\[
\{V,IV,JV,KV,\xi_1,\xi_2,\xi_3,\xi_4\},
\]
where $\{\xi_1,\dots,\xi_4\}$ is a local horizontal orthonormal frame adapted to
the hypercomplex structure, namely
\[
\xi_2=I\xi_1,\qquad \xi_3=J\xi_1,\qquad \xi_4=K\xi_1.
\]

We claim that
\begin{equation}\label{eqn:thet}
\theta_I=-(a+b)\,V^\flat+\theta_I^T.
\end{equation}

Indeed, using~\eqref{eqn:theta}, we compute
\[
\begin{aligned}
\theta_I(V)
&=H(V,IV,IV)+H(JV,KV,IV)
  +H(\xi_1,\xi_2,IV)+H(\xi_3,\xi_4,IV)\\
&=-a+d(IV^\flat)(\xi_1,\xi_2)+d(IV^\flat)(\xi_3,\xi_4)\\
&=-a+d(IV^\flat)(\xi_1,I\xi_1)+d(IV^\flat)(J\xi_1,K\xi_1)\\
&=-a+\bigl(dIV^\flat-JdIV^\flat\bigr)(\xi_1,I\xi_1)\\
&=-a+(\beta_I-J\beta_I)(\xi_1,I\xi_1)\\
&=-a-b\,\omega_I^T(\xi_1,I\xi_1)\\
&=-(a+b),
\end{aligned}
\]
where we used~\eqref{eqn:diff} and~\eqref{eqn:beta1}.

Similarly,
\[
\begin{aligned}
\theta_I(IV)
&=-H(V,IV,V)-H(JV,KV,V)
  -H(\xi_1,\xi_2,V)-H(\xi_3,\xi_4,V)\\
&=-(dV^\flat-JdV^\flat)(\xi_1,I\xi_1)=0,
\end{aligned}
\]
which vanishes since $dV^\flat$ is $(1,1)$ with respect to $I,J,K$, and
\[
\begin{aligned}
\theta_I(JV)
&=H(V,IV,KV)+H(JV,KV,KV)
  +H(\xi_1,\xi_2,KV)+H(\xi_3,\xi_4,KV)\\
&=(dKV^\flat-JdKV^\flat)(\xi_1,I\xi_1)\\
&=(\beta_K-J\beta_K)(\xi_1,I\xi_1)\\
&=-b\,\omega_K^T(\xi_1,I\xi_1)=0.
\end{aligned}
\]
An analogous computation shows that $\theta_I(KV)=0$.

Finally, for any $X\in\mathcal{F}^\perp$, we obtain
\[
\begin{aligned}
\theta_I(X)
&=H(V,IV,IX)+H(JV,KV,IX)
  +H(\xi_1,\xi_2,IX)+H(\xi_3,\xi_4,IX)\\
&=H^T(\xi_1,\xi_2,IX)+H^T(\xi_3,\xi_4,IX)\\
&=\theta_I^T(X),
\end{aligned}
\]
where $H^T$ denotes the Bismut torsion of the transverse hyper-Hermitian
structure. This proves~\eqref{eqn:thet}.

By the definition of $V$ and using~\eqref{eqn:thet}, we have
\[
V^\flat=\tfrac12(\theta-df)
= -\tfrac12(a+b)\,V^\flat+\tfrac12(\theta_I^T-df),
\]
which immediately yields
\begin{equation} \label{eqn:a+b}
a+b=-2\qquad\text{and}\qquad \theta_I^T=df.
\end{equation}
Here we used that $df$ is horizontal (see Remark~\ref{rmk:useful}, statement~$8)$).
In particular, $b$ must be constant. By the same argument we also obtain
\[
\theta_J^T=df\qquad\text{and}\qquad \theta_K^T=df.
\]

Summarizing, the transverse fundamental forms satisfy
\begin{equation}\label{eqn:domega}
\begin{aligned}
d\omega_I^T&= b\,KV^\flat\wedge\omega_J^T
             -b\,JV^\flat\wedge\omega_K^T
             +df\wedge\omega_I^T,\\
d\omega_J^T&= b\,IV^\flat\wedge\omega_K^T
             -b\,KV^\flat\wedge\omega_I^T
             +df\wedge\omega_J^T,\\
d\omega_K^T&= b\,JV^\flat\wedge\omega_I^T
             -b\,IV^\flat\wedge\omega_J^T
             +df\wedge\omega_K^T.
\end{aligned}
\end{equation}

Taking the exterior differential of the first equation in~\eqref{eqn:domega}
and using $d^2=0$, we obtain the algebraic condition
\begin{equation}\label{productab}
b(b-a)=0.
\end{equation}
If $a=0$, then $b=0$, which contradicts the relation $a+b=-2$.
Therefore $a\neq0$, proving the first statement of the theorem.

\smallskip

$2.$ We now prove  $2.$.
Using the decomposition
\[
\omega_I = V^\flat \wedge IV^\flat + JV^\flat \wedge KV^\flat + \omega_I^T,
\]
a direct computation yields
\begin{equation}\label{eqn:torsion}
\begin{split}
H
&= V^\flat \wedge dV^\flat
- a\, IV^\flat \wedge JV^\flat \wedge KV^\flat
+ IV^\flat \wedge \beta_I
+ JV^\flat \wedge \beta_J
+ KV^\flat \wedge \beta_K
- Idf \wedge \omega_I^T \\
&= V^\flat \wedge dV^\flat
- a\, IV^\flat \wedge JV^\flat \wedge KV^\flat
+ IV^\flat \wedge \beta_I
+ JV^\flat \wedge \beta_J
+ KV^\flat \wedge \beta_K
+ H^T,
\end{split}
\end{equation}
where $H^T=-Idf \wedge \omega_I^T$ denotes the torsion of the transverse HKT structure.  Note that  $H$ is written as a sum of a Chern-Simons $3$-form and the torsion on the space of orbits.

Let $X,Y,Z,W$ be horizontal vector fields. From the expression above we obtain
\begin{equation} \label{eqn:dH}
\begin{split}
dH(X,Y,Z,W)
&= dV^\flat \wedge dV^\flat (X,Y,Z,W)
+ \beta_I \wedge \beta_I (X,Y,Z,W) \\
&\quad + \beta_J \wedge \beta_J (X,Y,Z,W)
+ \beta_K \wedge \beta_K (X,Y,Z,W)
+ dH^T(X,Y,Z,W). \\
\end{split}
\end{equation}
Using the decomposition
\[
\beta_L = -\frac{b}{2}\,\omega_L^T + \eta_L, \qquad L \in \{I,J,K\},
\]
where $\eta_L$ are anti-self-dual horizontal $2$-forms, we obtain
\begin{equation}\label{eqn:dH-horizontal}
\begin{split}
dH(X,Y,Z,W)
&=\left(
-\frac{1}{2}\bigl(\|dV^\flat\|^2
+ \|\eta_I\|^2
+ \|\eta_J\|^2
+ \|\eta_K\|^2\bigr)
+ \frac{3}{2} b^2
\right)\, \mathrm{vol}^T \\
&\quad + dH^T(X,Y,Z,W),
\end{split}
\end{equation}
where $\mathrm{vol}^T$ denotes the volume form of the transverse hyper-Hermitian structure $(\omega_I^T, \omega_J^T, \omega_K^T, g^T)$.

By \cite[Proposition~4.33]{GFS}, one has \footnote{This equation is also known in physics as 
 dilaton field equation, see for instance \cite{IS-24}.}
\begin{equation}\label{eqn:gf}
\frac{1}{6}\|H\|^2 + \Delta f - \|\operatorname{grad} f\|^2 = C,
\end{equation}
for some constant $C \in \mathbb{R}$.

Using \eqref{eqn:torsion} and the fact that, for any $L=I,J,K$,
\[
\|\beta_L\|^2
= \frac{b^2}{4}\|\omega_L^T\|^2 + \|\eta_L\|^2
= b^2 + \|\eta_L\|^2,
\]
we obtain
\[
\|H\|^2
= \|H^T\|^2
+ 3\bigl(\|dV^\flat\|^2
+ \|\eta_I\|^2
+ \|\eta_J\|^2
+ \|\eta_K\|^2\bigr)
+ C_0,
\]
where $C_0$ is a constant. Moreover,\footnote{This computation already appeared in an early version of \cite{ALS}.}
\[
\|H^T\|^2 = 6 \|\operatorname{grad} f\|^2 .
\]

Plugging this into \eqref{eqn:gf}, we obtain
\begin{equation}\label{eqn:laplace}
\frac{1}{2}\bigl(
\|dV^\flat\|^2
+ \|\eta_I\|^2
+ \|\eta_J\|^2
+ \|\eta_K\|^2
\bigr)
+ \Delta f
= C_1,
\end{equation}
for some constant $C_1 \in \mathbb{R}$.

Let $\{e_1,e_2,e_3,e_4\}$ be a local $I$-unitary orthonormal horizontal frame.
Evaluating $dH$ on $(e_1,e_2,e_3,e_4)$ and using \eqref{eqn:dH-horizontal}, we obtain
\[
\begin{split}
dH(e_1,e_2,e_3,e_4)
=& -\frac{1}{2}\bigl(
\|dV^\flat\|^2
+ \|\eta_I\|^2
+ \|\eta_J\|^2
+ \|\eta_K\|^2
\bigr)
+ \frac{3}{2}b^2 \\
&\quad - d(Idf) \wedge \omega_I^T(e_1,e_2,e_3,e_4)
+ Idf \wedge df \wedge \omega_I^T(e_1,e_2,e_3,e_4).
\end{split}
\]

We now compute the second term. Using that $\omega_I^T(e_1,e_2)=\omega_I^T(e_3,e_4)=1$, we have
\[
\begin{split}
d(Idf) \wedge \omega_I^T(e_1,e_2,e_3,e_4)
=&\ d(Idf)(e_1,e_2) + d(Idf)(e_3,e_4) \\
=& \sum_{j=1}^4 \mathcal{L}_{e_j}(df(e_j))
- \sum_{i=1}^2 Idf([e_{2i-1},e_{2i}]) \\
=& \sum_{j=1}^4 \mathcal{L}_{e_j}(df(e_j))
- \sum_{i=1}^2 Idf\!\left(
\nabla^{\mathrm{Ob}}_{e_{2i-1}}e_{2i}
- \nabla^{\mathrm{Ob}}_{e_{2i}}e_{2i-1}
\right) \\
=& \sum_{j=1}^4 \mathcal{L}_{e_j}(df(e_j))
- df(\nabla^{\mathrm{Ob}}_{e_j}e_j) \\
=& \sum_{j=1}^4 \mathcal{L}_{e_j}(df(e_j))
- df(\nabla^{\mathrm{B}}_{e_j}e_j)
- A(e_j,e_j,\operatorname{grad} f),
\end{split}
\]
where in the last equality we used \eqref{eqn:obata}.\par
We observe that, since $e_j$ and $\operatorname{grad} f$ are horizontal vector fields, equation \eqref{eqn:A} yields
\[
\begin{split}
2A(e_j,e_j,\operatorname{grad} f)
=& -H(e_j,Ie_j,I\operatorname{grad} f) - H(Ie_j,Ie_j,\operatorname{grad} f) \\
& - H(e_j,Ke_j,K\operatorname{grad} f) - H(Ie_j,Ke_j,J\operatorname{grad} f).
\end{split}
\]
Since all arguments are horizontal, the torsion $H$ may be replaced by the transverse torsion $H^T$, and therefore
\[
\begin{split}
2A(e_j,e_j,\operatorname{grad} f)
=& -H^T(e_j,Ie_j,I\operatorname{grad} f)
- H^T(e_j,Ke_j,K\operatorname{grad} f) \\
& - H^T(Ie_j,Ke_j,J\operatorname{grad} f).
\end{split}
\]

A straightforward computation then shows that
\[
\begin{split}
\sum_{j=1}^4 A(e_j,e_j,\operatorname{grad} f)
=& -\frac{1}{2} \sum_{j=1}^4
\Big(
H^T(e_j,Ie_j,I\operatorname{grad} f)
+ H^T(e_j,Ke_j,K\operatorname{grad} f)
\Big) \\
& + \frac{1}{2} \sum_{j=1}^4 H^T(e_j,Je_j,J\operatorname{grad} f).
\end{split}
\]
Recalling the definition of the transverse Lee form $\theta^T$, we obtain
\[
\sum_{j=1}^4 A(e_j,e_j,\operatorname{grad} f)
= -\theta^T(\operatorname{grad} f)
= -\|\operatorname{grad} f\|^2,
\]
where in the last equality we used that $\theta^T=df$.

Therefore,
\[
\begin{split}
(dIdf \wedge \omega_I^T)(e_1,e_2,e_3,e_4)
=& \sum_{j=1}^4 \mathcal{L}_{e_j}(df(e_j))
- df(\nabla^{LC}_{e_j} e_j)
+ \|\operatorname{grad} f\|^2 \\
=& \Delta f + \|\operatorname{grad} f\|^2 .
\end{split}
\]
A direct computation also shows that
\[
(Idf \wedge df \wedge \omega_I^T)(e_1,e_2,e_3,e_4)
= -\|\operatorname{grad} f\|^2.
\]

We then finally obtain
\[
dH(e_1,e_2,e_3,e_4)
= -\frac{1}{2}(\|dV^\flat\|^2 + \|\eta_I\|^2 + \|\eta_J\|^2 + \|\eta_K\|^2)+ \frac{3}{2} b^2
- \Delta f - 2 \|\operatorname{grad} f\|^2 = 0.
\]
Since $-\frac{1}{2} (\|dV^\flat\|^2 + \|\eta_I\|^2 + \|\eta_J\|^2 + \|\eta_K\|^2) - \Delta f$ is constant, it follows that
\[
2\|\operatorname{grad} f\|^2 = \mathrm{const}.
\]
By the maximum principle, this forces $f$ to be constant, proving the third statement of the theorem.

\smallskip

$3.$   Finally,  we  prove $3.$  Using the proof of  1., in particular using \eqref{eqn:a+b} and \eqref{productab}, we are  left with two possibilities:
\[
(a,b)=(-1,-1)\qquad\text{or}\qquad (a,b)=(-2,0).
\]
We exclude the case $b=0$ and $a=-2$. 
If $b=0$, then $dH=0$ and equation \eqref{eqn:dH-horizontal} imply
\[
\|dV^\flat\|^2 + \|\eta_I\|^2 + \|\eta_J\|^2 + \|\eta_K\|^2 = 0.
\]
Consequently $dV^\flat=0$, and hence $d\theta=0$, since $2dV^\flat=d\theta$.
This contradicts Proposition~\ref{prop:dtheta}. \par

It remains to consider $a=b=-1$. The equations~ \eqref{eqn:cartan2} and \eqref{eqn:beta1} immediately imply
\[
\mathcal{L}_{IV}\omega_J=-\omega_K.
\]
The other identities 
\[
\mathcal{L}_{JV}\omega_K=-\omega_I,\qquad
\mathcal{L}_{KV}\omega_I=-\omega_J,
\]
follow similarly, using equations \eqref{eqn:beta1} and the bracket relations between $V,IV,JV,KV$.  
\end{proof}

{\begin{remark} We point out that the dimensional assumption  in the proof of  Theorem \ref{thm:main} is essential. Indeed, the fact that the manifold is $8$-dimensional is used to show that, if $X$ is non zero on $\mathcal{U}$, then $d\theta$ would vanish on $\mathcal{U}$, since the vector fields \[ V,\, IV,\, JV,\, KV,\, X,\, IX,\, JX,\, KX \] would span the kernel of $d\theta$ on $\mathcal{U}$. Moreover, in order to apply Proposition~\ref{prop:dtheta} and derive a contradiction, the simply connectedness assumption is also required. A  result in any dimension  has been  recently proved  in \cite{KS-26}.
\end{remark}

By statements $1.$ and $3.$ of the previous Theorem, we may immediately prove the following
\begin{corollary} \label{cor:properties}
Let $(M,I,J,K,g)$ be a strong HKT compact and simply connected $8$-dimensional manifold which is not hyper-K\"ahler. Then the following statements holds true
\begin{enumerate}[label=\arabic*.]
\item[1)]  $\mathbb{H}^*$ acts on $M$ isometrically.
\item[2)] $V,IV,JV,KV$ are infinitesimal automorphisms preserving $\cal{F}^\perp \in \operatorname{TM}$, namely $[W,\cal{F}^\perp] \subset \cal{F}^\perp$ for any $W \in \{V,IV,JV,KV\}$, where $\cal{F}$ is the distribution generated by $V,IV,JV,KV$ and $\cal{F}^\perp$ is its orthogonal complement with respect to $g$.
\item[3)] $\nabla^{Ob} V =\frac{1}{2} Id, \ \nabla^{Ob} IV =\frac{1}{2} I, \ \nabla^{Ob} JV =\frac{1}{2} J, \ \nabla^{Ob} KV =\frac{1}{2} K$.
\item[4)]  All the components of the Obata curvature tensor involving the quaternionic span of $V$ vanishes.
\end{enumerate}
\end{corollary}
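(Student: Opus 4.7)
The plan is to address the four assertions in turn, each leveraging a different consequence of Theorem~\ref{thm:main}.

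For assertion~1), Theorem~\ref{thm:main}.1 already identifies the Lie algebra of $\{V, IV, JV, KV\}$ with $\mathfrak{u}(1)\oplus\mathfrak{su}(2)\cong\mathfrak{gl}(1,\mathbb{H})$, which is the Lie algebra of $\mathbb{H}^*$. Since these vector fields are Killing on the compact manifold $M$, they are complete, so their flows integrate to an isometric action of the simply connected Lie group $\mathbb{H}^*\cong\mathbb{R}\times \operatorname{SU}(2)$.

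Assertion~2) is then immediate: the involutivity of $\mathcal{F}$ (Theorem~\ref{thm:main}.1) ensures that the flow of each $W\in\{V, IV, JV, KV\}$ preserves $\mathcal{F}$, while the Killing property ensures it preserves $g$, so it must also preserve $\mathcal{F}^\perp$. Infinitesimally this gives $[W,\mathcal{F}^\perp]\subset\mathcal{F}^\perp$.

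For assertion~3), I would compute directly from~\eqref{eqn:obata}: since $\nabla^B W=0$, one has $g(\nabla^{Ob}_X W,Z)=A(X,W,Z)$ with $A$ given by~\eqref{eqn:A}. The specialization of the Bismut torsion derived in the proof of Theorem~\ref{thm:main} with $a=b=-1$ and $f$ constant reads
\[
H = V^\flat\wedge dV^\flat + IV^\flat\wedge JV^\flat\wedge KV^\flat + \sum_{L\in\{I,J,K\}} LV^\flat\wedge\bigl(\tfrac{1}{2}\omega_L^T+\eta_L\bigr).
\]
Substituting into~\eqref{eqn:A} and decomposing both $X$ and $Z$ along $TM=\mathcal{F}\oplus\mathcal{F}^\perp$, the expansion reduces to finitely many contributions: the $\omega_L^T$-terms combine to yield $\tfrac{1}{2}g(LX,Z)$, and the $\eta_L$-contributions cancel pairwise because anti-self-dual horizontal $2$-forms in four real dimensions are of type $(1,1)$ with respect to each of $I,J,K$. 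Cyclic permutation of $(I,J,K)$ then handles the cases $W=IV,JV,KV$.

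For assertion~4), I would apply the identity valid for any torsion-free connection
\[
(\mathcal{L}_W\nabla^{Ob})(Y,Z) = R^{Ob}(W,Y)Z + \nabla^{Ob}_Y\bigl(\nabla^{Ob}_Z W\bigr) - \nabla^{Ob}_{\nabla^{Ob}_Y Z} W.
\]
Assertion~3) gives $\nabla^{Ob} W=\tfrac{1}{2}L$ with $L\in\{\operatorname{Id},I,J,K\}$; since $I,J,K$ are $\nabla^{Ob}$-parallel, the last two terms cancel. Each $W\in\{V,IV,JV,KV\}$ preserves the $\mathfrak{sp}(1)$-span of $\{I,J,K\}$, by Remark~\ref{rmk:useful}.2 for $V$ and by assertion~3) of Theorem~\ref{thm:main} for $IV,JV,KV$; hence $W$ preserves the underlying $\mathrm{GL}(n,\mathbb{H})$-structure and therefore $\mathcal{L}_W\nabla^{Ob}=0$. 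Consequently $R^{Ob}(W,Y)Z=0$ for every $W\in\mathcal{F}$, and the antisymmetry of $R^{Ob}$ in its first two slots gives $R^{Ob}(Y,W)Z=0$ as well. Finally, substituting $\nabla^{Ob} W=\tfrac{1}{2}L$ into the definition of the curvature and using $T^{Ob}=0$ yields $R^{Ob}(X,Y)W=\tfrac{1}{2}L(T^{Ob}(X,Y))=0$.

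The main technical obstacle is assertion~3): the identities $\nabla^{Ob} W=\tfrac{1}{2}L$ require careful tracking of the cancellations among the several horizontal and vertical contributions in the expansion of $A(X,W,Z)$, exploiting both the explicit form of $H$ from Theorem~\ref{thm:main} and the type-$(1,1)$ properties of the anti-self-dual transverse components. The remaining assertions reduce to standard Lie-algebraic and connection-theoretic manipulations.
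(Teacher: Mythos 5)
Your proposal is correct, and for parts 1) and 2) it matches the paper's argument (the paper verifies 2) by the slightly more direct computation $0=(\mathcal{L}_{I_aV}g)(X,I_bV)=-g([I_aV,X],I_bV)$ for horizontal $X$, which is equivalent to your flow argument). For parts 3) and 4), however, you take a genuinely different route: the paper simply invokes Lemma 2.2 and Proposition 2.3 of \cite{PPS}, applied to $-V$ using the hyper-holomorphy of $V$ and the rotation identities $\mathcal{L}_{IV}\omega_J=-\omega_K$, etc., from Theorem \ref{thm:main}, whereas you re-derive these statements internally: for 3) by expanding $g(\nabla^{Ob}_XW,Z)=A(X,W,Z)$ with the explicit torsion \eqref{eqn:tor2} and using that the $\eta_L$ are $\mathrm{SU}(2)$-invariant (so their contributions to $A$ cancel in pairs, while the $\omega_L^T$ and vertical terms assemble to $\tfrac12 g(LX,Z)$ --- I checked representative cases and the cancellations work as you claim); and for 4) via the identity $(\mathcal{L}_W\nabla^{Ob})(Y,Z)=R^{Ob}(W,Y)Z+\nabla^{Ob}_Y(\nabla^{Ob}_ZW)-\nabla^{Ob}_{\nabla^{Ob}_YZ}W$ together with $R^{Ob}(X,Y)W=\tfrac12 L(T^{Ob}(X,Y))=0$. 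Your approach buys a self-contained proof tied to the structure equations of Section 5; the paper's buys brevity. One point in 4) deserves sharper justification: preserving the $\mathrm{GL}(n,\mathbb{H})$-structure (i.e.\ the span of $I,J,K$) does \emph{not} by itself force $\mathcal{L}_W\nabla^{Ob}=0$, since the Obata connection depends on the hypercomplex structure itself and a flow inducing a point-dependent rotation of $(I,J,K)$ would change it. What saves the argument is that by Theorem \ref{thm:main} the rotation coefficients are the constants $\pm1$, so the flow of each $W$ carries $(I,J,K)$ to a constant $\mathrm{SO}(3)$-rotation of itself, which has the same Obata connection by uniqueness; you should state this explicitly rather than appeal to the quaternionic structure alone.
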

\begin{proof}
By the previous Theorem the Lie algebra of the distribution spanned by $\{V, I V, J V, KV\}$ is isomorphic to $\mathfrak{u}(1) \oplus \mathfrak{su}(2)$. Furthermore, since $V,IV,JV,KV$ are Killing (see Remark \ref{rmk:useful}), they generate an isometric action of $\mathbb{H}^*$ on $M$. This concludes the proof of the first claim. \par

For the second claim we only have to check that $[W,\cal{F}^\perp] \subset \cal{F}^\perp$ for any $W \in \{V,IV,JV,KV\}$. Let us fix the following notation, let $V=I_0 V$, $IV=I_1 V$, $JV=I_2 V$ and $KV=I_3 V$. Using that $I_a V$ is Killing for $a=0,\dots,3$ we get that for any $a,b=0,\dots,3$ and $X \in \cal{F}^\perp$
\[
0=\cal{L}_{I_{a}V} g (X, I_b V)=g([I_a V,X],I_b V),
\]
forcing $[I_a V,X] \in \cal{F}^\perp$.\par

The third and fourth claim are consequences of \cite[Lemma 2.2 and Proposition 2.3]{PPS}, where the authors proved that if we have a hyper-complex manifold endowed with a vector field $W$ such that $W, IW,JW,KW$ span a copy of $\mathrm{U}(2)$ where $W$ is in the center $\mathrm{U}(1)$, then if $W$ is hyper-holomorphic and $\cal{L}_{IW}J=K$,  $\cal{L}_{JW}K=I$, $\cal{L}_{KW}I=J$, this leads to $\nabla^{Ob} W=-\frac{1}{2} Id$. Applying this to $-V$ we immediately get the statements $3)$ and  $4)$.
\end{proof}

\begin{remark} Note that if the distribution generated by $V, IV, JV$ and $KV$ is integrable, then the statement 2) of the above corollary holds in any dimension. 
\end{remark}

The vector field $V$ is commonly referred to as the \emph{Euler} vector field, and the corresponding hyper-complex structure is described as \emph{conical} \cite{CH2} ( see Section \ref{Potential} for more details).
\begin{remark}
Under the hypothesis of Theorem \ref{thm:main}, as a consequence of the previous Proposition we get that the orbits of the foliation $\cal{F}$ are totally geodesic and Obata flat. Furthermore, as a consequence of the fact that $\nabla^{Ob} V=\frac{1}{2} Id$, we observe that the Obata connection can only preserve tensors of the kind $(k,k)$, as pointed out in \cite{So}. 
\end{remark}

\section{ Existence of a foliation  with compact Leaves} \label{section6}

 Let $(M, I,J,K,g)$  be a compact, $8$ dimensional simply connected strong HKT manifold that is not hyper-K\"ahler. Denote by $\mathrm{Iso}(M)$ its isometry group. By the Myers--Steenrod theorem, $\mathrm{Iso}(M)$ is a compact Lie group. \par
By Corollary \ref{cor:properties}, we have  there exists an isometric action of $\mathbb{H}^*$ on $M$. Let
\[
\rho \colon \mathbb{H}^*\cong SU(2)\times \mathbb{R} \longrightarrow \mathrm{Iso}(M),
\]
be the associated Lie group homomorphism and let $H:=\mathrm{Im}(\rho)$. Since $\mathrm{Iso}(M)$ is compact, the closure $K:=\overline{H}$ is a compact Lie subgroup of $\mathrm{Iso}(M)$. Its Lie algebra is of the form
\[
\mathfrak{k}\simeq \mathfrak{su}(2)\oplus \mathbb{R}^m,
\]
where the abelian factor corresponds to the closure of the $\mathbb{R}$-action.

Let $G:=\overline{\rho(\mathbb{R})}\subset \mathrm{Iso}(M)$. Then $G$ is a compact torus, hence a compact abelian Lie group, acting smoothly, isometrically  and hyper-holomorphically on $M$. Since $G$ is compact, the action is proper, and since $M$ is compact, all the hypotheses of the isotropy type stratification theorem are satisfied. For each $x\in M$, denote by
\[
G_x:=\{g\in G\mid g\cdot x=x\}
\]
the isotropy subgroup at $x$. Define the set of isotropy types by
\[
\mathcal O(M,G):=\{[G_x]\mid x\in M\},
\]
where $[G_x]$ denotes the conjugacy class of $G_x$ in $G$. Since $G$ is abelian, conjugation is trivial and therefore each isotropy type coincides with the isotropy subgroup itself. By the general theory of proper actions of compact Lie groups on compact manifolds, the set $\mathcal O(M,G)$ is finite. In particular, only finitely many closed subgroups of $G$ occur as isotropy groups of points of $M$.

Let $T_1,\dots,T_r\subset G$ be the isotropy subgroups which are maximal with respect to inclusion. The finiteness of $\mathcal O(M,G)$ implies that $r<\infty$, and by maximality every isotropy subgroup $G_x$ is contained in one $T_i$. Let $\mathfrak t_i\subset\mathbb{R}^m$ denote the Lie algebra of $T_i$. \par
By hypothesis, there exists a vector $v \in\mathbb{R}^m$ with associated Killing field $V$. Since $V$ is nowhere vanishing, $v$ does not belong to the Lie algebra of the isotropy group at any point of $M$. In particular, $v \notin\mathfrak t_i$ for all $i=1,\dots,r$, and hence each $\mathfrak t_i$ is a proper Lie subalgebra of $\mathbb{R}^m$.

It follows that for any $v'$ such that $v' \notin \mathfrak{t}_i$, for each $i$, the associated Killing field $V'$ is nowhere vanishing on $M$.  Let us fix any of such $v'$. If furthermore $v'\in\mathbb{Z}^m$ (we can always make such a choice), then the flow of $V'$ has closed orbits. \par
Since $v' \in \mathbb{R}^m= Lie (G)$, $V'$ is Killing, hyper-holomorphic and commutes with $V$. In particular
\[
[V',IV]=[V',JV]=[V',KV]=0, \ \ \mathcal{L}_V' g=0, \ \ \mathcal{L}_V' d\theta=0,
\]
\[
\langle V', IV,JV,KV \rangle \cong \mathfrak{u}(1) \oplus \mathfrak{su}(2),
\]
and the corresponding action on $M$ has closed orbits. We then obtain the following
\begin{theorem}
Let $(M,I,J,K,g)$ be a compact, simply connected, $8$-dimensional strong HKT manifold.
Then either $M$ is hyper-K\"ahler, or $M$ admits the structure of a Hopf fibration over a $4$-dimensional oriented Riemannian orbifold with trivial orbifold fundamental group.
\end{theorem}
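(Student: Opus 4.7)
The plan is to use the isometric action of $G:=S^1\times SU(2)$ constructed in the preceding paragraphs as the structure group of the desired Hopf fibration: the $S^1$-factor comes from the closed-orbit Killing vector field $V'$, while the $SU(2)$-factor is generated by $IV, JV, KV$, whose span is isomorphic to $\mathfrak{su}(2)$ by Theorem~\ref{thm:main}. Since $V, IV, JV, KV$ are Bismut-parallel of equal nonzero constant norm (see Proposition~\ref{prop:BHE}), $V'$ is nowhere vanishing by construction, and $V'$ commutes with each of $V, IV, JV, KV$, one obtains a smooth, hyper-holomorphic, proper action of the compact Lie group $G$ on the compact manifold $M$ preserving both the hypercomplex structure and the metric $g$.

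First I would show that the $G$-action is everywhere locally free, so that all of its orbits are $4$-dimensional and the orbit space $B:=M/G$ is a $4$-dimensional Riemannian orbifold. Since $IV, JV, KV$ are pointwise $g$-orthogonal, they span a $3$-dimensional subspace of $T_pM$ at every $p\in M$, so any nontrivial infinitesimal stabilizer at $p$ would force a degeneracy involving $V'_p$. But $V'$ was chosen in the center $\R^m$ of $\mathfrak{k}=\mathfrak{su}(2)\oplus\R^m$ outside every maximal isotropy algebra $\mathfrak{t}_i$, which rules out such degeneracies; consequently the stabilizers $\Gamma_p\subset G$ are all finite. By the slice theorem for proper actions, $B$ then inherits the structure of a $4$-dimensional Riemannian orbifold whose transverse metric descends from the horizontal distribution $\mathcal{F}^\perp$ of Corollary~\ref{cor:properties}. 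Orientability of $B$ follows because the transverse $2$-forms $\omega_I^T, \omega_J^T, \omega_K^T$ of Theorem~\ref{thm:main} are $G$-invariant and together determine a $G$-invariant transverse volume form; the projection $M\to B$ is then the claimed Hopf-type fibration with generic fiber $G/\Gamma\cong (S^1\times S^3)/\Gamma$.

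Finally, triviality of $\pi_1^{\mathrm{orb}}(B)$ would follow from the homotopy exact sequence for orbifold fibrations
\[
\pi_1(F)\longrightarrow \pi_1(M)\longrightarrow \pi_1^{\mathrm{orb}}(B)\longrightarrow \pi_0(F),
\]
together with $\pi_1(M)=1$ and the connectedness of the generic fiber $F=G/\Gamma$, which forces the surjection $\pi_1(M)\twoheadrightarrow\pi_1^{\mathrm{orb}}(B)$ to land in a trivial group. The main obstacle I anticipate is the rigorous verification of everywhere local freeness: one must show that at every $p\in M$ the vector $V'_p$ fails to lie in $\mathrm{span}\{IV_p, JV_p, KV_p\}$. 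This is the whole purpose of the careful selection of $v'\in\Z^m$ outside every $\mathfrak{t}_i$ in the construction preceding the theorem, but translating that algebraic condition on $v'$ into a pointwise statement on $V'_p$ uniformly across $M$ is the delicate point on which the rest of the argument depends.
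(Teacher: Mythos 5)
Your proposal follows essentially the same route as the paper: Section~\ref{section6} constructs exactly this $S^1\times SU(2)$-action (the nowhere-vanishing, closed-orbit $V'$ from the closure of the $V$-flow, together with $IV,JV,KV$ spanning $\mathfrak{su}(2)$) and then passes to the orbit space, and your additions — orientability of the quotient from the $G$-invariant transverse forms $\omega_I^T,\omega_J^T,\omega_K^T$ and triviality of $\pi_1^{\mathrm{orb}}$ from the homotopy exact sequence with $\pi_1(M)=1$ and connected fibers — are precisely the details the paper leaves implicit. The one point you flag is a fair one: the condition $v'\notin\mathfrak{t}_i$ is an isotropy condition for the abelian group $G$ acting alone, so by itself it yields $V'_p\neq 0$ at every $p$ but not directly that $V'_p\notin\mathrm{span}\{IV_p,JV_p,KV_p\}$, which is what local freeness of the combined $S^1\times SU(2)$-action requires; the paper is no more explicit on this step than you are, so your proposal is faithful to its argument rather than deficient relative to it. To close it one should note that $G$ preserves the frame $\{V,IV,JV,KV\}$ and the splitting $TM=\mathcal{F}\oplus\mathcal{F}^\perp$ (being a limit of the flow of $V$, which commutes with all four fields), and then rule out a vertical degeneracy of $V'$ against $IV,JV,KV$ pointwise; this deserves an explicit argument but does not change the structure of the proof.
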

\begin{remark}
 We point out, that the fibers are Hopf surfaces, but not  necessarily quaternionic. However, when $V$ has closed orbits itself, the fibers are hypercomplex. In this case it is well known that the quotient geometry is QKT - see \cite{Poon-Swann} for example. However, in real dimension $4$ a QKT structure is equivalent to an oriented Riemannian structure. In  particular, the fact that the fibers are hypercomplex does not provide any new information in our case and one can use $V'$ instead of $V$. We note that further results in the real dimension 8 are announced recently in \cite{P-25}.
\end{remark}

\section{Characterization of $\nabla^B H=0$} \label{section7}

Since it will be useful in the following, we summarize some facts from the proof of Theorem \ref{thm:main}: the expression of $H$ is given by
\begin{equation} \label{eqn:tor2}
H= V^\flat \wedge dV^\flat + IV^\flat \wedge JV^\flat \wedge KV^\flat + IV^\flat \wedge \beta_I + JV^\flat \wedge \beta_J + KV^\flat \wedge \beta_K,
\end{equation}
whereas the differential of $IV^\flat,JV^\flat,KV^\flat$ can be written as
\begin{equation} \label{eqn:diff2}
dIV^\flat=JV^\flat \wedge KV^\flat + \beta_I,\ \  dJV^\flat=KV^\flat \wedge IV^\flat +\beta_J, \ \ dKV^\flat=IV^\flat \wedge JV^\flat +\beta_K
\end{equation}
where 
\begin{equation} \label{eqn:betas}
\beta_I=\frac{1}{2} \omega_I^T + \eta_I, \ \beta_J= \frac{1}{2} \omega_J^T + \eta_J, \ \beta_K= \frac{1}{2} \omega_K^T + \eta_K,
\end{equation}
and  $\eta_I,\eta_J,\eta_K$ are anti self dual horizontal $2$-forms. Using equations \eqref{eqn:domega} and $d^2 IV^\flat=d^2JV^\flat= d^2KV^\flat=0$, we recover that
\begin{equation}\label{eqn:deta}
d\eta_I=- \eta_J \wedge KV^\flat + \eta_K \wedge JV^\flat, \ 
d\eta_J=- \eta_K \wedge IV^\flat +\eta_I \wedge KV^\flat, \ \ d\eta_K=- \eta_I \wedge JV^\flat + \eta_J \wedge IV^\flat.
\end{equation} 
Furthermore, the strong HKT condition implies that 
\begin{equation}\label{eqn:dH3}
0=dH= \left[ -\frac{1}{2}\left(\norm{dV^\flat}^2+ \norm{\eta_I}^2 + \norm{\eta_J}^2 + \norm{\eta_K}^2 \right) + \frac{3}{2}  \right] vol^T.
\end{equation} 
We recall that a HKT structure $(I,J,K,g)$ is said to be Einstein HKT if there exists a smooth function $\lambda$ such that 
\[
\frac{\rho^C_I - J \rho^C_I}{2}=\lambda \,  \omega_I,
\]
where $\rho^C_I$ is the (first) Chern Ricci curvature of $(I,g)$.
 
  \begin{proposition}
Any $8$-dimensional compact simply connected strong HKT manifold $(M,I,J,K,g)$ is HKT Einstein with constant $\lambda$.
\end{proposition}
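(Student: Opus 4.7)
My approach is to convert the Chern--Ricci form of $(I,g)$ into an explicit exterior derivative using the constancy of $f$, and then extract its $J$-anti-invariant part directly from the structural equations in Theorem~\ref{thm:main}.

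First, I would observe that by Theorem~\ref{thm:main}, the potential $f$ is constant, whence $\theta = 2V^\flat$ and $I\theta = 2(IV)^\flat$. On any Hermitian manifold $(M, I, g)$ the Chern and Bismut Ricci forms are related by Gauduchon's identity $\rho^C_I = \rho^B_I - d(I\theta)$ (a consequence of the affine structure of Ricci forms along the canonical line of Hermitian connections). Since the strong HKT condition implies that $(I, g)$ is BHE, we have $\rho^B_I = 0$, so
\[
\rho^C_I = -d(I\theta) = -2\, dIV^\flat = -2\bigl( JV^\flat \wedge KV^\flat + \beta_I\bigr),
\]
where the last equality is equation \eqref{eqn:diff2}.

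Next, I would compute how each term of $dIV^\flat$ behaves under the involution $\alpha \mapsto J\alpha$ defined by $(J\alpha)(X,Y) = \alpha(JX,JY)$. A direct calculation using the quaternionic relations $IJ=K$, $JK=I$, $KI=J$ together with the $g$-skew-adjointness of $J$ gives
\[
J(JV^\flat \wedge KV^\flat) = -\, V^\flat \wedge IV^\flat,
\]
and combining this with the orthogonal decomposition $\omega_I = V^\flat \wedge IV^\flat + JV^\flat \wedge KV^\flat + \omega_I^T$ yields
\[
(JV^\flat \wedge KV^\flat) - J(JV^\flat \wedge KV^\flat) = \omega_I - \omega_I^T.
\]
By \eqref{eqn:betas}, $\beta_I = \tfrac{1}{2}\omega_I^T + \eta_I$, where $\eta_I$ is horizontal, anti-self-dual and $\mathrm{SU}(2)$-invariant. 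Since $\omega_I^T$ is the transverse K\"ahler form for $I$ on the $4$-dimensional hypercomplex distribution $\mathcal{F}^\perp$, one checks $J\omega_I^T = -\omega_I^T$, whereas $J\eta_I = \eta_I$ by $\mathrm{SU}(2)$-invariance, giving
\[
\beta_I - J\beta_I = \omega_I^T.
\]

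Adding the two contributions yields $dIV^\flat - J\, dIV^\flat = \omega_I$, and therefore
\[
\frac{\rho^C_I - J\rho^C_I}{2} = -\,\omega_I,
\]
so the manifold is HKT Einstein with the constant $\lambda = -1$. The only potentially delicate point in this plan is pinning down the exact sign in Gauduchon's relation between $\rho^C$ and $\rho^B$; different sign conventions would merely change the sign of $\lambda$ but not the fact that it is constant. An alternative, if one wished to bypass Gauduchon entirely, would be to compute $\rho^C_I$ directly from the canonical trivialization of $K_{(M,I)}$ provided by $\Omega_I^n = (\omega_J + i\omega_K)^n$, using that $d\Omega_I = \theta \wedge \Omega_I$ on an HKT manifold; this leads to the same conclusion via an analogous bookkeeping.
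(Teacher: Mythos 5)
Your proof is correct and follows essentially the same route as the paper: both reduce the claim to the identity $dIV^\flat - J\,dIV^\flat = \omega_I$, obtained from the decomposition $dIV^\flat = JV^\flat\wedge KV^\flat + \tfrac{1}{2}\omega_I^T + \eta_I$ together with $J(JV^\flat\wedge KV^\flat) = -V^\flat\wedge IV^\flat$, $J\omega_I^T=-\omega_I^T$ and $J\eta_I=\eta_I$. The only (immaterial) differences are the sign convention relating $\rho^C_I$ to $dI\theta$ --- the paper takes $\rho^C_I=+\,dI\theta=2\,dIV^\flat$ and obtains $\lambda=+1$ where you obtain $\lambda=-1$, which does not affect the statement --- and that you should dispatch the hyper-K\"ahler case separately (there $\rho^C_I=0$, so $\lambda=0$), since Theorem~\ref{thm:main} and equations \eqref{eqn:diff2}--\eqref{eqn:betas} are only available when the manifold is not hyper-K\"ahler.
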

\begin{proof}
If $(M,I,J,K,g)$ is hyper-K\"ahler, then it is HKT Einstein with $\lambda=0$. Let us assume then that $(M,I,J,K,g)$ is not hyper-K\"ahler. Since $\rho^B=0$, then $\rho^C_I=dI\theta=2 dIV^\flat$. Therefore
\[
\begin{split}
\frac{\rho^C_I - J \rho^C_I}{2}=&dIV^\flat- J (dIV^\flat)\\
=&JV^\flat \wedge KV^\flat + \frac{1}{2} \omega_I^T + \eta_I- J(JV^\flat \wedge KV^\flat + \frac{1}{2} \omega_I^T + \eta_I) \\
=&JV^\flat \wedge KV^\flat + \frac{1}{2} \omega_I^T + \eta_I + V^\flat \wedge IV^\flat+ \frac{1}{2} \omega_I^T-\eta_I \\
=&V^\flat \wedge IV^\flat+JV^\flat \wedge KV^\flat +\omega_I^T \\
=& \omega_I,
\end{split}
\]
concluding the proof.
\end{proof}
We determine a sufficient condition to have parallel Bismut torsion in terms of $\eta_I,\eta_J,\eta_K$.
\begin{theorem} \label{thm:eta}
Let $(M,I,J,K,g)$ be a strong HKT compact and simply connected $8$-dimensional manifold which is not hyper-K\"ahler. If any of $\eta_I, \eta_J, \eta_K$ is zero, then all of them are zero and $\nabla^B H=0$. In particular, $(M,I,J,K,g)$ is the hyper-Hermitian Samelson space $SU(3)$.
\end{theorem}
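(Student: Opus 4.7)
The plan is to establish Theorem~\ref{thm:eta} in three stages: first, deduce the simultaneous vanishing of all three horizontal ASD forms $\eta_I,\eta_J,\eta_K$; second, use the resulting rigid structure to prove $\nabla^B H=0$; third, invoke the splitting of Theorem~\ref{thm:splitting2} to identify $M$ with $SU(3)$.

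I would begin by assuming, without loss of generality, that $\eta_I=0$. Equation~\eqref{eqn:deta} then reduces to
\[
-\,\eta_J\wedge KV^\flat+\eta_K\wedge JV^\flat=0.
\]
Since $\eta_J,\eta_K$ are horizontal $2$-forms while $JV^\flat,KV^\flat$ are mutually orthogonal unit vertical $1$-forms, contracting with $KV$ forces $\eta_J=0$ and contracting with $JV$ forces $\eta_K=0$. The cases $\eta_J=0$ or $\eta_K=0$ would follow by the cyclic analogues of~\eqref{eqn:deta}.

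Next, once $\eta_I=\eta_J=\eta_K=0$, relations~\eqref{eqn:betas} collapse to $\beta_L=\tfrac12\omega_L^T$, while~\eqref{eqn:dH3} forces $\|dV^\flat\|^2=3$ pointwise. The torsion takes the explicit form
\[
H=V^\flat\wedge dV^\flat+IV^\flat\wedge JV^\flat\wedge KV^\flat+\tfrac12\bigl(IV^\flat\wedge\omega_I^T+JV^\flat\wedge\omega_J^T+KV^\flat\wedge\omega_K^T\bigr).
\]
The Bismut-parallelism of $V,IV,JV,KV$ (Remark~\ref{rmk:useful}) makes the dual $1$-forms and the triple product $IV^\flat\wedge JV^\flat\wedge KV^\flat$ parallel for $\nabla^B$; combining $\nabla^B\omega_L=0$ with the decomposition $\omega_L=V^\flat\wedge LV^\flat+\cdots+\omega_L^T$, the transverse forms $\omega_L^T$ are also $\nabla^B$-parallel. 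Hence $\nabla^B H=V^\flat\wedge\nabla^B dV^\flat$, so the problem reduces to showing $\nabla^B dV^\flat=0$. Using $dV^\flat=\iota_V H$ and $\nabla^B V=0$, I would derive the identity $(\nabla^B_X dV^\flat)(Y,Z)=(\nabla^B_X H)(V,Y,Z)$. Two further inputs will be needed: that $\iota_{IV}H,\iota_{JV}H,\iota_{KV}H$ are all $\nabla^B$-parallel (since each equals a sum of parallel terms once $\eta_L=0$), and that the transverse component $H^T=-Idf\wedge\omega_I^T$ vanishes because $f$ is constant by Theorem~\ref{thm:main}, so that $H(\xi_i,\xi_j,\xi_k)=0$ on horizontal vectors. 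Together these show that the only components of $\nabla^B H$ that might survive are the $(V,\xi_i,\xi_j)$-slots; the cancellation of these must then be extracted from the closedness $d(dV^\flat)=0$, the strong HKT condition $dH=0$, and the Bismut Ricci-flatness $\mathrm{Ric}^B=0$ (which follows from $\rho^B=0$ and $\nabla^B\theta=0$ via~\eqref{eqn:rhob2}), combined through a Bianchi-type identity for the Bismut connection.

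Finally, once $\nabla^B H=0$ has been established, I would invoke Theorem~\ref{thm:splitting2}: since $M$ is simply-connected, it splits as $M\cong K\times G'$ with $K$ compact hyper-K\"ahler and $G'=G\times\R^k$ a hyper-Hermitian Samelson space. Compactness of $M$ forces $k=0$, so $G$ is a compact $1$-connected semisimple Lie group whose dimension is divisible by $4$. In real dimension $8$ the only admissible option (excluding $K=M$ hyper-K\"ahler, which is ruled out by hypothesis) is $\dim K=0$, $\dim G=8$, which forces $G=SU(3)$, the unique compact $1$-connected semisimple Lie group of that dimension. The main obstacle will be the vanishing of $\nabla^B dV^\flat$ in the second stage: every other piece of $H$ is manifestly $\nabla^B$-parallel, but the horizontal anti-self-dual $SU(2)$-invariant $(1,1)$-form $dV^\flat$ of constant norm $\sqrt{3}$ does not obviously decompose into parallel tensors, and its parallelism has to be wrung out of the combined algebraic and differential constraints on $H$ (closedness, strong HKT condition, and Bismut Ricci-flatness).
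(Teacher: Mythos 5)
Your first stage (propagating $\eta_I=0$ to $\eta_J=\eta_K=0$ via \eqref{eqn:deta}) and your final stage (identifying the manifold through Theorem~\ref{thm:splitting2}, with compactness and simple connectedness killing the flat and hyper-K\"ahler factors and forcing $G=SU(3)$) both match the paper and are fine. The reduction of $\nabla^B H=0$ to $\nabla^B dV^\flat=0$ is also the paper's reduction. However, there is a genuine gap exactly where you flag ``the main obstacle'': you never actually prove that $\nabla^B_X dV^\flat(Y,Z)=0$ when $X,Y,Z$ are all horizontal, and the tools you propose for it --- $d(dV^\flat)=0$, $dH=0$, $\mathrm{Ric}^B=0$, and ``a Bianchi-type identity for the Bismut connection'' --- are not sufficient. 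These are all pointwise or first-order constraints, whereas the statement $\nabla^B dV^\flat=0$ in the horizontal directions is a rigidity result that the paper obtains only through a \emph{global} argument: one observes that for horizontal arguments $\nabla^B dV^\flat=\nabla^T dV^\flat$ for the transverse Levi--Civita connection, that $dV^\flat$ has constant norm and defines (after normalization) a transverse almost K\"ahler structure $(\mathbb{J}^T,g^T)$ on the four-dimensional horizontal distribution, and that the O'Neill formulas together with $Ric^{LC}=\tfrac14 H^2$ show $g^T$ is transversally Einstein with \emph{strictly positive} Einstein constant. The conclusion $\nabla^T dV^\flat=0$ then comes from the integral identity of Apostolov--Dr\u{a}ghici--Moroianu (a Sekigawa-type resolution of the Goldberg conjecture under nonnegative scalar curvature), suitably extended to the foliated setting via the $L^2$ compatibility $\langle \delta_g\pi^*\alpha,\pi^*\beta\rangle=\langle\pi^*\delta_{g^T}\alpha,\pi^*\beta\rangle$. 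Without this transverse almost K\"ahler Einstein input, the parallelism of $dV^\flat$ does not follow.

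A secondary, smaller omission: for $X$ vertical and $Y,Z$ horizontal, the vanishing of $\nabla^B_X dV^\flat(Y,Z)$ is not automatic either; the paper computes it explicitly from the formula $\nabla^B_X dV^\flat(Y,Z)=-dV^\flat((\iota_Y dX^\flat)^\sharp,Z)-dV^\flat(Y,(\iota_Z dX^\flat)^\sharp)$ and the $\mathrm{SU}(2)$-invariance of $dV^\flat$ against the self-dual forms $\omega_L^T$ (using $\beta_L=\tfrac12\omega_L^T$). You should include this case rather than asserting that every piece other than the purely horizontal one is ``manifestly parallel.''
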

\begin{proof}
By \eqref{eqn:deta}, we have that if any of $\eta_I,\eta_J,\eta_K$ is zero, then all of them must vanish on $M$. Indeed, if for instance $\eta_I=0$ then by \eqref{eqn:deta}
\[
0=d\eta_I=- \eta_J \wedge KV^\flat + \eta_K \wedge JV^\flat,
\]
which implies that $\eta_J= \eta_K=0$. \par
If $\eta_L$ are zero, then $\beta_L= \frac{1}{2} \omega_L^T$ by \eqref{eqn:betas} and, furthermore, $\nabla^B \beta_L=0$. \par
Using the expression of the Bismut torsion in \eqref{eqn:tor2}, we get that for any vector field $X$
\begin{equation}\label{eqn:par}
\nabla^B_X H=( \nabla^B_X dV^\flat) \wedge V^\flat.
\end{equation}
In order to prove that $ \nabla^B_X H$ is zero, we only need to prove that $\nabla^B_X dV^\flat (Y,Z)$ vanishes for any $X \in \Gamma(\operatorname{TM})$ and $Y,Z$ vector fields different from (a multiple of ) $V$. \par
We observe that if we take $Y,Z \in \{ IV,JV,KV\}$ and $X$ any vector field, then the statement is trivially true, since $dV^\flat$ is horizontal and $IV,JV,KV$ are Bismut parallel. The same holds for $Y \in \{ IV,JV,KV\}$ and $Z$ horizontal. It remains to check the condition for $Y,Z$ both horizontal. If $X\in \{ V, IV,JV,KV\}$ is vertical, then 
\begin{equation} \label{eqn:nabla}
\begin{split}
 \nabla^B_X dV^\flat (Y,Z)=&X(dV^\flat(Y,Z)-dV^\flat(\nabla^B_X Y,Z)-dV^\flat(Y,\nabla^B_X Z)\\
 =& - Xg([Y,Z],V)-dV^\flat([X ,Y]+T^B(X,Y),Z)\\
 &-dV^\flat(Y,[X Z]+T^B(X,Z))\\
 =& -g([X,[Y,Z]]+[[X,Y],Z]+[Y,[X,Y]],V)\\
 & -dV^\flat(T^B(X,Y),Z)-dV^\flat(Y,T^B(X,Z))\\
 =& -dV^\flat((\iota_Y dX^\flat)^\sharp, Z)-dV^\flat(Y,(\iota_Z dX^\flat)^\sharp).
 \end{split}
\end{equation}
We are going to show that this is zero in any point $p$ of $M$. Let us fix a orthonormal basis $\{e_1,\dots,e_4\}$ of horizontal vectors in $\operatorname{T_pM}$ adapted to $I,J,K$. Then, denoted by $\{e^1,\dots,e^4\}$ the dual basis $$\omega_I^T=e^{12}+e^{34},  \ \omega_J^T=e^{13}+e^{42}, \  \omega_K^T=e^{14}+e^{23}.$$ Then we may write $Y$ and $Z$ at $p$ as $Y=Y^i e_i$ and $Z=Z^j e_j$. The equality \eqref{eqn:nabla} at $p$ yields
\begin{equation} \label{eqn:aaa}
 \nabla^B_X dV^\flat (Y,Z) (p)= Y^i Z^j \left[(dX^\flat)_{ik} (dV^\flat)_{jk}-(dX^\flat)_{jk} (dV^\flat)_{ik}\right].
\end{equation}
If $X=V$ then the above expression is clearly zero. For $X=IV$ we use the following fact: since $dV^\flat$ is horizontal and anti self dual we may write $dV^\flat$ at $p$ as $\lambda_1( e^{12}-e^{34})+ \lambda_2 (e^{13}-e^{42})+ \lambda_3 (e^{14}-e^{23})$, with $\lambda_1,\lambda_2,\lambda_3 \in \R$ (see the proof of Theorem 1\ref{thm:main}). The expression \eqref{eqn:aaa} is hence given by 
\[
 \nabla^B_{IV} dV^\flat (Y,Z) (p)= \frac{1}{2}Y^i Z^j [(\omega_I^T)_{ik} (dV^\flat)_{jk}-(\omega_I^T)_{jk} (dV^\flat)_{ik}],
\]
as $dIV^\flat(Y,Z)=\beta_I (Y,Z)= \frac{1}{2} \omega_I^T (Y,Z)$. Since $dV^\flat$ is $\mathrm{SU}(2)$-invariant,  $(\omega_I^T)_{ik} (dV^\flat)_{jk}-(\omega_I^T)_{jk} (dV^\flat)_{ik}=0$. The other cases follow in analogous way. \par
It remains to check that  $\nabla^B_X dV^\flat (Y,Z)=0$ for any $X,Y,Z$ horizontal vector fields. In this case 
\[
\nabla^B_X dV^\flat (Y,Z)=  \nabla^{T}_X dV^\flat (Y,Z),
\]
where $\nabla^T$ is the transverse Levi-Civita and the equality follows since for any $X,Y,Z$ horizontal vector fields $g(\nabla^{B}_X Y, Z)=g(\nabla^{LC}_X Y, Z)+ \frac{1}{2} H(X,Y,Z)=g(\nabla^{T}_X Y, Z)$. \par	
We observe that by the condition $dH=0$ \eqref{eqn:dH3}, we get that $dV^\flat$ has constant non zero norm. \par
We are going to define a transverse almost complex structure $\mathbb{J}^T$ on the bundle $\cal{F}^\perp$ using the metric $g^T$ and the basic $2$-form $dV^\flat$\footnote{By Remark \ref{rmk:useful} it is not difficult to check that $\cal{L}_{V} dV^\flat=\cal{L}_{IV} dV^\flat=\cal{L}_{JV} dV^\flat=\cal{L}_{KV} dV^\flat=0$.}. In particular, since $\cal{L}_{V} g^T=\cal{L}_{IV} g^T=\cal{L}_{JV} g^T=\cal{L}_{KV} g^T=0$, the pair $(\mathbb{J}^T, g^T)$ will induce an almost Hermitian structure on any local quotient of $M$ by the flows of $V,IV,JV,KV$. \par
Let $\mathbb{J}^T$ be the endomorphism defined on horizontal vector fields by the relation
\[
g^T(\mathbb{J}^TX,Y)= \frac{1}{\norm{dV^\flat}}dV^\flat (X,Y).
\]
 $\mathbb{J}^T$ defines an almost complex structure: this follows from that fact that $dV^\flat$ is a ASD horizontal form and the horizontal distribution is four dimensional.
We point out that the basic Hermitian structure $(\mathbb{J}^T, g^T)$ is only almost K\"ahler. Moreover we adapt the proof of \cite{ALS} in our case, which relay on some results in \cite{ADM}. \par
We first observe that the transverse metric is Einstein. By O'Neill formulas \cite{ON} (see also \cite[Section 2.5.1]{BG}), 
\[
Ric^T(X,Y)=Ric^{LC}(X,Y)+ 2 \big[ g(A_X V, A_Y V) +g(A_X IV, A_Y IV) +g(A_X JV, A_Y JV) +g(A_X KV, A_Y KV) \big],
\]
where $A_X V= \operatorname{pr} (\nabla^{LC}_X V)$ and $\operatorname{pr} : \operatorname{TM} \to \cal{F}^\perp$ is the standard projection. \par
Since by Theorem \ref{thm:main} $\nabla^B V= 2 \nabla^B \theta=0$, by formula \eqref{eqn:rhob2} $Ric^B=0$. In particular, its symmetric part has to vanish, implying that $Ric^{LC}=\frac{1}{4} H^2$, where $H^2(X,Y)=g(\iota_X H, \iota_Y H)$. On the other hand, for any $X, Z$ horizontal vector fields
\[
g(A_X V, Z)=g(\nabla^{LC}_X V, Z)=g(\nabla^{B}_X V, Z)- \frac{1}{2} H(X,V,Z)=\frac{1}{2} dV^\flat(X,Z),
\]
implying that $A_X V= \frac{1}{2} (\iota_XdV^\flat)^\sharp$. Similarly, $A_X IV= \frac{1}{4} (\iota_X\omega_I^T)^\sharp$, $A_X JV= \frac{1}{4} (\iota_X\omega_J^T)^\sharp$ and  $A_X KV= \frac{1}{4} (\iota_X\omega_K^T)^\sharp$. In particular, 
\[
Ric^T(X,Y)=\frac{3}{4} \big[ (dV^\flat)^2 (X,Y) +\frac{1}{4} (\omega_I^T)^2 (X,Y)+\frac{1}{4} (\omega_J^T)^2 (X,Y)+\frac{1}{4} (\omega_K^T)^2 (X,Y)   \big],
\]
where again $\alpha^2(X,Y)=g(\iota_X \alpha, \iota_Y \alpha)$. \par
We fix a local orthonormal horizontal basis $\{e_1,\dots, e_4\}$ of $\cal{F}^\perp$ adapted to $I,J,K$ in a open set $\cal{U}$. With respect to the dual basis $\{e^1,\dots,e^4\}$ we may write $\omega_I=e^{12}+e^{34}, \omega_J=e^{13}+e^{42}, \omega_K=e^{14}+e^{23}$ and $dV^\flat=\lambda_1( e^{12}-e^{34})+ \lambda_2 (e^{13}-e^{42})+ \lambda_3 (e^{14}-e^{23})$, with $\lambda_i$ smooth function defined in a neighborhood of $p$.
It is a straightforward computation that for any $q \in \cal{U}$ 
\[
Ric_q^T(e_i,e_j)= \frac{3}{4} \left(\frac{3}{4}+(\lambda_1^2+\lambda_2^2+\lambda_3^2)(q)\right) \delta^i_j.
\]
Using that $\lambda_1^2+\lambda_2^2+\lambda_3^2$ is a constant multiple of $\norm{dV}^2$ we get that the transverse metric $g_T$ is Einstein with strictly positive Einstein constant. \par
Now we argue as in \cite{ALS}. In fact, as pointed out in \cite{ALS}, the integral identity in \cite[Corollary 2.2]{ADM} also extends in our situation, in fact, also in our case it holds that the $\operatorname{L}^2$ product on $(M,g)$ satisfies $< \delta_g \pi^* \alpha, \pi^* \beta >=< \pi^* \delta_{g^{T}} \alpha, \pi^* \beta >$ for any $\alpha, \beta$ transversal forms. In particular, using that the transverse metric is Einstein with positive Einstein constant, we get that $\nabla^T dV^\flat=0$ (see \cite[Remark 2.3]{ADM} for further details). \par
Therefore, by \eqref{eqn:par}, $\nabla^B H=0$ and the result follows by Theorem \ref{thm:splitting2}.
\end{proof}
We are able now to fully characterize the condition $\nabla^B H$ for compact and simply connected $8$-dimensional strong HKT manifolds. 
\begin{theorem}
Let $(M^8,g,I,J,K)$ be a compact simply connected strong HKT manifold which is not hyper-K\"ahler. The following are equivalent:
\begin{enumerate} [label=\arabic*.]
\item $\nabla^B H=0$, i.e., $(M^8,g,I,J,K)$ is the hyper-Hermitian Samelson space $SU(3)$;
\item for any $L \in \{I,J,K\}$, $R^{LC}(V,LV,X,Y)=0$ for any $X,Y$ horizontal vector fields;
\item there exists $L \in \{I,J,K\}$ such that $R^{LC}(V,LV,X,Y)=0$ for any $X,Y$ horizontal vector fields;
\item for any $L \in \{I,J,K\}$, $\eta_L=0$;
\item there exists $L \in \{I,J,K\}$ such that $\eta_L=0$.
\end{enumerate}
\end{theorem}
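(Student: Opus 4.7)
The plan is to close the cycle
\[
(1)\Longrightarrow(2)\Longrightarrow(3)\Longrightarrow(5)\Longrightarrow(1),
\]
supplemented by the equivalence $(4)\Leftrightarrow(5)$ (which is exactly the reasoning used in Theorem~\ref{thm:eta} via \eqref{eqn:deta}). The arrow $(5)\Rightarrow(1)$ is then supplied directly by Theorem~\ref{thm:eta}, and $(2)\Rightarrow(3)$ is obvious, so the real work is concentrated in the three remaining implications.

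The key ingredient will be a curvature identity: for every $L\in\{I,J,K\}$ and every pair of horizontal vector fields $X,Y$,
\[
R^{LC}(V,LV,X,Y) \;=\; \tfrac{1}{4}\,g\bigl([A,\eta_L^\sharp]X,Y\bigr),\qquad A:=(dV^\flat)^\sharp,
\]
where $\eta_L^\sharp$ denotes the skew-symmetric endomorphism of $\mathcal{F}^\perp$ with $g(\eta_L^\sharp X,Y)=\eta_L(X,Y)$. This will be obtained from the standard Levi-Civita/Bismut comparison, exploiting the vanishings $T^B(V,LV)=0$ and $\mathcal{L}_V H=\mathcal{L}_{LV}H=0$ (both consequences of $dH=0$, Cartan's formula, and $\iota_{LV}dV^\flat=0$), together with the identity $R^B(V,X)Y=-(\nabla^B_X T^B)(V,Y)$ that follows from $\mathcal{L}_V\nabla^B=0$ and Bismut-parallelism of $V$. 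The splitting $\mathfrak{so}(\mathcal{F}^\perp)=\mathfrak{so}(3)_+\oplus\mathfrak{so}(3)_-$ annihilates the self-dual $\tfrac12(\omega_L^T)^\sharp$ piece of $\beta_L^\sharp$ in the final commutator with the anti-self-dual $A$, leaving $\tfrac14[A,\eta_L^\sharp]$. With this formula in hand, $(4)\Rightarrow(2)$ is immediate, while $(1)\Rightarrow(2)$ follows from Theorem~\ref{thm:splitting2} plus dimension counting: $(1)$ forces $M=SU(3)$, whose bi-invariant curvature satisfies $R^{LC}(X,Y)Z=\tfrac14[[X,Y],Z]$ and has $[V,LV]=0$ since both vectors lie in a Cartan of $\mathfrak{su}(3)$.

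The decisive and main obstacle is $(3)\Rightarrow(5)$. The curvature formula gives $[A,\eta_L^\sharp]=0$ for the distinguished $L$, and since $A$ is nowhere zero on $M$ (otherwise $d\theta$ would vanish on an open set, contradicting Proposition~\ref{prop:dtheta}) one obtains $\eta_L=c_L\,dV^\flat$ for some $c_L\in C^\infty(M)$. Plugging this into \eqref{eqn:deta} and using that wedge with the nondegenerate horizontal $2$-form $dV^\flat$ is injective on $1$-forms, I will then deduce that $\eta_J=c_J\,dV^\flat$ and $\eta_K=c_K\,dV^\flat$, together with the explicit gradient
\[
dc_I \;=\; c_K\,JV^\flat - c_J\,KV^\flat,
\]
and its two cyclic analogues. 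The decisive step will then be to compute $d^2 c_I=0$: expanding $d(dc_I)$ using $dJV^\flat=KV^\flat\wedge IV^\flat+\beta_J$, $dKV^\flat=IV^\flat\wedge JV^\flat+\beta_K$ and $\beta_L=\tfrac12\omega_L^T+c_L\,dV^\flat$, every wedge term cancels pairwise and there remains only
\[
0\;=\;d^2 c_I\;=\;\tfrac12\bigl(c_K\,\omega_J^T-c_J\,\omega_K^T\bigr);
\]
the linear independence of $\omega_J^T$ and $\omega_K^T$ in $\Lambda^2(\mathcal{F}^\perp)^*$ then forces $c_J=c_K=0$, hence $\eta_J=\eta_K=0$, and a final application of Theorem~\ref{thm:eta} upgrades this to $\eta_I=0$, completing $(3)\Rightarrow(5)$.
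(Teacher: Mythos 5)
Your proposal is correct in substance, and the decisive implication is handled by a genuinely different route than the paper's. Both arguments rest on the same curvature identity (your commutator formula $R^{LC}(V,LV,\cdot,\cdot)=\tfrac14[A,\eta_L^\sharp]$ is exactly Lemma \ref{lemma:appendix}, items 1 and 4, written invariantly), and both conclude $\eta_L=c_L\,dV^\flat$ from the fact that the centralizer of a nonzero element of $\mathfrak{so}(3)_-$ is its own span. The divergence is in what happens next. The paper proves $3\Rightarrow 4$ by a second-Bianchi-identity computation (equation \eqref{eqn:identity}) to show the proportionality factor is horizontally constant, then shows it is vertically constant by differentiating, and finally reaches a contradiction from $0=\lambda\,d\eta_I$. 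You instead extract everything from the structure equations \eqref{eqn:deta}: splitting $dc_I\wedge dV^\flat=-\eta_J\wedge KV^\flat+\eta_K\wedge JV^\flat$ into horizontal and mixed components simultaneously gives $(dc_I)^{\mathrm{hor}}=0$, the proportionality of $\eta_J,\eta_K$ to $dV^\flat$, and the explicit gradient $dc_I=c_K\,JV^\flat-c_J\,KV^\flat$; then $d^2c_I=0$ reduces (after the cancellations you describe, which I have checked) to $\tfrac12(c_K\omega_J^T-c_J\omega_K^T)=0$, killing $c_J$ and $c_K$. This bypasses the Bianchi identity entirely and is shorter; the paper's route, on the other hand, establishes the auxiliary identity \eqref{eqn:identity} which has independent content. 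Your treatment of $1\Rightarrow 2$ via the bi-invariant curvature of $SU(3)$ and $[V,LV]=0$ is also a legitimate alternative to the paper's use of Bismut flatness plus $R^{LC}=\tfrac14 R^B$.

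One point needs repair: the parenthetical claim that $A=(dV^\flat)^\sharp$ is \emph{nowhere} zero does not follow from Proposition \ref{prop:dtheta}, which only excludes vanishing on an open set; isolated zeros are not ruled out by that argument. This is harmless but must be phrased correctly: carry out the whole analysis on the dense open set $M\setminus Z$, $Z=\{dV^\flat=0\}$ (dense precisely because $Z$ has empty interior by Proposition \ref{prop:dtheta}), conclude $\eta_J=\eta_K=0$ there, and extend to all of $M$ by continuity --- exactly as the paper does at the end of its proof.
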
 
\begin{alignedproof}{4\gives5}
\alignedproofstep{4\gives5} Obvious. 
\alignedproofstep{5\gives4} It is a straightforward consequence of equation \eqref{eqn:deta}.
\alignedproofstep{5\gives1} It is Theorem \ref{thm:eta}.
\alignedproofstep{1\gives2} If $\nabla^B H=0$, and so $(M^8,g,I,J,K)$ is the hyper-Hermitian Samelson space $SU(3)$, then $(M^8,g,I,J,K)$ is Bismut flat. In particular, fixed a local horizontal orthonormal frame $\{e_1,\dots,e_4\}$, we have that for any $L \in \{I,J,K\}$, $R^{B}(V,LV,e_i,e_j)=0$. By Lemma \ref{lemma:appendix}, 
\[
R^{LC}(V,LV,e_i,e_j)= \frac{1}{4} R^{B}(V,LV,e_i,e_j)=0. 
\]
\alignedproofstep{2\gives3} Obvious.
\alignedproofstep{3\gives4} To lighten the notation we may assume that $L=I$, the other proofs are analogues. \footnote{The strategy of the proof is inspired by an argument appearing in a preliminary version of the work \cite{ALS}.}
Firstly we have to show that fixed any local horizontal orthonormal frame $\{e_1,\dots,e_4\}$ adapted to $I,J,K$ we have that for any $k=1,\dots,4$, 
\begin{equation} \label{eqn:identity}
\sum_{1 \le i < j \le 4} dV^\flat (e_i,e_j) (\nabla^{LC}_{e_{k}} \eta_I)(e_i,e_j)-\eta_I (e_i,e_j) (\nabla^{LC}_{e_{k}} dV^\flat)(e_i,e_j)=0.
\end{equation}
We prove this equality as a consequence of the second Bianchi identity
\begin{equation} \label{eqn:secondbianchi}
\nabla^{LC}_{e_{i}} R^{LC}(V,IV,{e_{l}},{e_{j}})+\nabla^{LC}_{e_{l}} R^{LC}(V,IV,{e_{j}},{e_{i}})+\nabla^{LC}_{e_{j}} R^{LC}(V,IV,{e_{i}},{e_{l}})=0.
\end{equation}
We study in details the first term $\nabla^{LC}_{e_{i}} R^{LC}(V,IV,{e_{l}},{e_{j}})$, as the other ones behave similarly. \\
We observe that since $R^{LC}(V,IV,{e_{l}},{e_{j}})=0$ by hypothesis, 
\[
\begin{split}
\nabla^{LC}_{e_{i}} R^{LC}(V,IV,{e_{l}},{e_{j}})=&-R^{LC}(\nabla^{LC}_{e_{i}} V,IV,{e_{l}},{e_{j}})-R^{LC}(V,\nabla^{LC}_{e_{i}} IV,{e_{l}},{e_{j}})\\
&-R^{LC}(V,IV,\nabla^{LC}_{e_{i}} {e_{l}},{e_{j}})-R^{LC}(V,IV,{e_{l}},\nabla^{LC}_{e_{i}} {e_{j}}).
\end{split}
\]
We examine each term of the above expression. \par
Since $e_i$ is horizontal, for any vertical vector field $U$,  $g(\nabla^{LC}_{e_{i}} V, U)=g(\nabla^B_{e_{i}} V, U)-\frac{1}{2} H({e_{i}},V,U)=0$, as $V$ is Bismut parallel and by the expression of the torsion $H$ given in \eqref{eqn:tor2}. On the other hand, if $U$ is instead horizontal, $g(\nabla^{LC}_{e_{i}} V, U)=g(\nabla^B_{e_{i}} V, U)-\frac{1}{2} H({e_{i}},V,U)=\frac{1}{2} dV^\flat({e_{i}},U)$, implying that $\nabla^{LC}_{e_{i}} V=\frac{1}{2}(\iota_{e_{i}} dV^\flat)^\sharp$. Hence, 
\[
\ \ \ \ \ \ \ \ \ \ \ \ R^{LC}(\nabla^{LC}_{e_{i}}V,IV,{e_{l}},{e_{j}})=\frac{1}{2}R^{LC}((\iota_{e_{i}} dV^\flat)^\sharp,IV,{e_{l}},{e_{j}})=\frac{1}{2} \sum_{k=1}^4 dV^\flat(e_i,e_k)R^{LC}(e_k,IV,{e_{l}},{e_{j}}).
\]
The same argument applies for $R^{LC}(V,\nabla^{LC}_{e_{i}} IV,{e_{l}},{e_{j}})$. \\
For the term $R^{LC}(V,IV,\nabla^{LC}_{e_{i}}{e_{l}},{e_{j}})$ we may argue in this way. By hypothesis, $$R^{LC}(V,IV,\nabla^{LC}_{e_{i}}{e_{l}},{e_{j}})=R^{LC}(V,IV,\pi_V(\nabla^{LC}_{e_{i}}{e_{l}}),{e_{j}}),$$ where $\pi_V$ is the vertical projection. Moreover, for any vertical vector $U$, $R^{LC}(V,IV,U,Y)=0$, again by using Lemma \ref{lemma:appendix}. Therefore
\[
\ \ \ \ \ \ \ \ \ \ \ \ R^{LC}(V,IV,\nabla^{LC}_{e_{i}}{e_{l}},{e_{j}})=0.
\]
The last term can be done similarly.\\
To summarize, using Lemma \ref{lemma:appendix} we have that
\[
\begin{split}
\ \ \	\ \ \ \ \ \ \ \  \ \ \  \nabla^{LC}_{e_{i}} R^{LC}(V,IV,{e_{l}},{e_{j}})=&\frac{1}{2} \sum_{k=1}^4 dV^\flat(e_i,e_k)R^{LC}(IV,e_k,e_{l},e_{j})- (\beta_I) (e_i,e_k) R^{LC}(V,e_k,{e_{l}},{e_{j}})\\
=& \frac{1}{4} \sum_{k=1}^4 -dV^\flat(e_i,e_k)\nabla^{LC}_{e_{k}}\eta_I (e_{l},e_{j})+ (\beta_I) (e_i,e_k) \nabla^{LC}_{e_{k}}dV^\flat(e_{l},{e_{j}}).
\end{split}
\]
Dealing with the other terms in a similar fashion, we have that the second Bianchi identity \eqref{eqn:secondbianchi} reads as
\begin{equation}\label{eqn:components}
\begin{split}
\ \ \ \ \ \ \ \ 0=&  \sum_{k=1}^4-(dV^\flat)_{ik} (\nabla^{LC}_{e_{k}} \eta_I)_{lj}
-(dV^\flat)_{lk} (\nabla^{LC}_{e_{k}}\eta_I)_{ji}
-(dV^\flat)_{jk} (\nabla^{LC}_{e_{k}} \eta_I)_{il} \\
&+(\beta_{I})_{ik} (\nabla^{LC}_{e_{k}}dV^\flat)_{lj}
+(\beta_{I})_{lk} (\nabla^{LC}_{e_{k}}dV^\flat)_{ji}
+(\beta_{I})_{jk} (\nabla^{LC}_{e_{k}}dV^\flat)_{il},
\end{split}
\end{equation}
where we omitted the constant factor $\frac{1}{4}$. \par
We also point out that since $R^{B}(V,X,Y,Z)=R^{B}(V,X,IY,IZ)=R^{B}(V,X,JY,JZ)=R^{B}(V,X,KY,KZ)$ (this holds in general for any HKT manifold since $R^B$ has values in $(1,1)$ forms), we have the following identities
\[
\begin{split}
&\nabla^{LC}_X dV^\flat (e_1,e_2)=-\nabla^{LC}_X dV^\flat (e_3,e_4), \ \nabla^{LC}_X \eta_I (e_1,e_2)=-\nabla^{LC}_X \eta_I(e_3,e_4) \\
&\nabla^{LC}_X dV^\flat (e_1,e_3)= \ \ \nabla^{LC}_X dV^\flat (e_2,e_4), \ \nabla^{LC}_X \eta_I (e_1,e_3)=\ \  \ \nabla^{LC}_X \eta_I(e_2,e_4) \\
&\nabla^{LC}_X dV^\flat (e_1,e_4)=-\nabla^{LC}_X dV^\flat (e_2,e_3), \ \nabla^{LC}_X \eta_I (e_1,e_4)=-\nabla^{LC}_X \eta_I(e_2,e_3). \\
\end{split}
\]
We start by taking $i=1, l=2, j=3$ in \eqref{eqn:components}. The first line of \eqref{eqn:components} can be written as 
\begin{equation} \label{eqn:123}
\begin{split}
&-dV^\flat_{12}[ (\nabla^{LC}_2 \eta_I)_{23}+(\nabla^{LC}_1 \eta_I)_{13}-(\nabla^{LC}_4 \eta_I)_{12}] -dV^\flat_{13}[ (-\nabla^{LC}_3 \eta_I)_{32}-(\nabla^{LC}_4 \eta_I)_{13}\\
&-(\nabla^{LC}_1 \eta_I)_{12}]  -dV^\flat_{14}[ (-\nabla^{LC}_4 \eta_I)_{14}-(\nabla^{LC}_3 \eta_I)_{31}-(\nabla^{LC}_2 \eta_I)_{21}]. \\
\end{split}
\end{equation}
Using that for any $j=1,\dots,4$, the codifferential $\delta \eta_I (e_j)=0$ and that for any $U \in \{ V,IV,JV,KV\}$ we have $\nabla^{LC}_U \eta_I (U,e_j)=-\eta_I (\nabla^{LC} _U U ,e_j)=0$, we get that
\[
\ \ \ \ \ \ \ \ \ \delta \eta_I (e_j)=- \sum_{i=1}^4 \nabla^{LC}_{e_{i}} \eta_I(e_i,e_j) =0.
\]
In particular, \eqref{eqn:123} can be written also as
\begin{equation*} 
\begin{split}
&-dV^\flat_{12}[ (-\nabla^{LC}_4 \eta_I)_{43}-(\nabla^{LC}_4 \eta_I)_{12}] -dV^\flat_{13}[ (\nabla^{LC}_4 \eta_I)_{42}-(\nabla^{LC}_4 \eta_I)_{13}]\\
& -dV^\flat_{14}[ (-\nabla^{LC}_4 \eta_I)_{14}+(\nabla^{LC}_4 \eta_I)_{41}]  \\
&=-dV^\flat_{12}[ -2(\nabla^{LC}_4 \eta_I)_{12}] -dV^\flat_{13}[ -2(\nabla^{LC}_4 \eta_I)_{13}]-dV^\flat_{14}[-2 (\nabla^{LC}_4 \eta_I)_{14}] = \\
&= 2[dV^\flat_{12}(\nabla^{LC}_4 \eta_I)_{12} +dV^\flat_{13}(\nabla^{LC}_4 \eta_I)_{13}+dV^\flat_{14} (\nabla^{LC}_4 \eta_I)_{14}]=\\
&=2\sum_{1 \le i < j \le 4} dV^\flat_{ij} (\nabla^{LC}_4 \eta_I)_{ij}. \\
\end{split}
\end{equation*}
Analogously, the second line of \eqref{eqn:components} is
 \begin{equation*} 
 \begin{split}
& \ \ \ \ \eta_{I_{12}}[ (\nabla^{LC}_2 dV^\flat)_{23}+(\nabla^{LC}_1dV^\flat)_{13}-(\nabla^{LC}_4dV^\flat)_{12}] +\eta_{I_{13}}[ (-\nabla^{LC}_3 dV^\flat)_{32}-(\nabla^{LC}_4 dV^\flat)_{13}\\
&   -(\nabla^{LC}_1 dV^\flat)_{12}]  +\eta_{I_{14}}[ (-\nabla^{LC}_4dV^\flat)_{14}-(\nabla^{LC}_3 dV^\flat)_{31}-(\nabla^{LC}_2 dV^\flat)_{21}]  \\
&    +\frac{1}{2} [(\nabla^{LC}_2 dV^\flat)_{34}+ (\nabla^{LC}_4 dV^\flat_I)_{41}-(\nabla^{LC}_3 dV^\flat)_{13} ]\\
\ \  &= \eta_{I_{12}}[ (\nabla^{LC}_2 dV^\flat)_{23}+(\nabla^{LC}_1dV^\flat)_{13}-(\nabla^{LC}_4dV^\flat)_{12}] +\eta_{I_{13}}[ (-\nabla^{LC}_3 dV^\flat)_{32}-(\nabla^{LC}_4 dV^\flat)_{13}\\
&  -(\nabla^{LC}_1 dV^\flat)_{12}]  +\eta_{I_{14}}[ (-\nabla^{LC}_4dV^\flat)_{14}-(\nabla^{LC}_3 dV^\flat)_{31}-(\nabla^{LC}_2 dV^\flat)_{21}] \\
&  +\frac{1}{2} [(\nabla^{LC}_2 dV^\flat)_{34}+ (\nabla^{LC}_2 dV^\flat)_{12}] \\
\ \ &= -2\sum_{1 \le i < j \le 4} \eta_{I_{ij}} (\nabla^{LC}_4 dV^\flat)_{ij},
\end{split}
\end{equation*}
where we used again that $\delta dV^\flat (e_j)=0$, and so $\delta dV^\flat_I (e_j)=- \sum_{i=1}^4 \nabla^{LC}_{e_{i}} dV^\flat_I(e_i,e_j) =0$. \\
Hence, for  $i=1, j=2, l=3$ in \eqref{eqn:components} we get that
\[
\frac{1}{2}\sum_{1 \le i < j \le 4} dV^\flat_{ij} (\nabla^{LC}_4 \eta_I)_{ij}-\frac{1}{2} \sum_{1 \le i < j \le 4} \eta_{I_{ij}} (\nabla^{LC}_4 dV^\flat)_{ij}=0.
\]
The same result for the other values of $k$ follows by analogues computations. \par
Having showed equation \eqref{eqn:identity}, we may argue as follows. With respect to the dual frame $\{e^1,\dots,e^4\}$ we may write $$
dV^\flat=a_1(e^{12}-e^{34})+a_2(e^{13}-e^{42})+a_3(e^{14}-e^{23}),$$ and $$ \eta_I=b_1(e^{12}-e^{34})+b_2(e^{13}-e^{42})+b_3(e^{14}-e^{23}),$$
where $a_i$ and $b_i$ are smooth functions. Applying Lemma \ref{lemma:appendix}, 
\begin{equation}\label{eqn:lccurvature}
\begin{split}
0=&R^{LC}(V,LV,e_1,e_2)=2(b_2a_3-b_3a_2), \\
0=&R^{LC}(V,LV,e_1,e_3)=2(b_3a_1-b_1a_3), \\ 
0=&R^{LC}(V,LV,e_1,e_4)=2(b_1a_2-b_2a_1).
\end{split}
\end{equation}
By Proposition \ref{prop:dtheta}, we have that $dV^\flat$ has only isolated zeros. Let $Z:=\{p \in M \ | \ dV^\flat (p)=0\}$ and let $p$ be any point of $M \setminus Z$. Then, either $\eta_I (p)=0$ or $\eta_I(p) \neq 0$. If $\eta_I(p) \neq 0$ there exists an open neighborhood $\cal{U}$ of $p$ such that $\eta_I$ is never vanishing on $\cal{U}$, and if we take $\cal{V}:=\cal{U} \cap M\setminus Z$, then $\cal{V}$ is an open neighborhood of $p$ on which both $\eta_I$ and $dV^\flat$ are never vanishing. \par
We prove that the second case cannot occur, i.e., that $\eta_I$ is zero on $M \setminus Z$.\par
Assume by contradiction that $\eta_I(p) \neq 0$. By looking more carefully at \eqref{eqn:lccurvature}, we get that on $\cal{V}$ there exists a smooth function $\lambda$ such that $dV^\flat=\lambda \eta_I$, with $\lambda$ being never vanishing, otherwise $dV^\flat$ would have a zero on $\cal{V}$. \par
On $\cal{V}$ we apply \eqref{eqn:identity} and we get that for any $k=1,\dots,4$,  
\[
\begin{split}
0=&\sum_{1 \le i < j \le 4} \lambda (\eta_{I})_{ij} (\nabla^{LC}_{e_{k}} \eta_I)_{ij}-(\eta_{I})_{ij} (\nabla^{LC}_{e_{k}} \lambda \eta_I)_{ij}\\
=&\sum_{1 \le i < j \le 4} \lambda (\eta_{I})_{ij} (\nabla^{LC}_{e_{k}} \eta_I)_{ij}-(\eta_{I})_{ij}{e_{k}} (\lambda) (\eta_{I})_{ij}-\lambda (\eta_{I})_{ij} (\nabla^{LC}_{e_{k}} \eta_I)_{ij}\\
=&-\sum_{1 \le i < j \le 4}((\eta_{I})_{ij})^2{e_{k}} (\lambda) \\
=&- \norm{\eta_I}^2 {e_{k}} (\lambda),
\end{split}
\]
which forces $\lambda$ to be constant on $\cal{V}$ in the horizontal directions. In particular
\[
d\lambda=\lambda_1 V^\flat + \lambda_2 IV^\flat + \lambda_3 JV^\flat + \lambda_4 KV^\flat,
\]
where $\lambda_i$ are smooth functions. Differentiating again we get 
\[
\begin{split}
0=&d\lambda_1 \wedge V^\flat + \lambda_1 dV^\flat + d\lambda_2\wedge IV^\flat + \lambda_2 JV^\flat \wedge KV^\flat + \lambda_2 \beta_I +  d\lambda_3\wedge JV^\flat \\ & + \lambda_3 KV^\flat \wedge IV^\flat + \lambda_3 \beta_J + d\lambda_4\wedge KV^\flat + \lambda_4 IV^\flat \wedge JV^\flat + \lambda_4 \beta_K, 
\end{split}
\]
which forces 
\[
 \lambda_1 dV^\flat + \lambda_2 \beta_I + \lambda_3 \beta_J + \lambda_4 \beta_K=0.
\]
In particular, the  self dual  part and the anti self dual  parts of the above expression have to be zero, which means that
\[
\lambda_2 \omega_I^T + \lambda_3 \omega_J^T + \lambda_4 \omega_K^T=0 \ \text{and} \ \lambda_1 dV^\flat+ \lambda_2 \eta_I + \lambda_3 \eta_J + \lambda_4 \eta_K=0,
\]
i.e., $\lambda_1=\lambda_2=\lambda_3=\lambda_4=0$, since $dV^\flat$ is never vanishing on $\cal{V}$. In particular, $\lambda$ is constant and non zero. By differentiating $dV^\flat=\lambda \eta_I$, we get
\[
0=\lambda  d\eta_I= \lambda (- \eta_J \wedge KV^\flat + \eta_K \wedge JV^\flat),
\]
which forces $\lambda=0$, a contradiction. In fact, since $\eta_I$ is non zero on $\cal{V}$, so are $\eta_J$ and $\eta_K$. \par
Therefore, $\eta_I$ must vanish on $M \setminus Z$ and so it must vanish on $M$, as $Z$ is a set of isolated points (see Proposition \ref{prop:dtheta}) and $\eta_I$ is smooth. This concludes the proof.
\end{alignedproof}

\smallskip

\section{HKT potential and conical hypercomplex structures for strong HKT manifolds}\label{Potential}
Existence of rotational vector fields on hyper-K\"ahler manifolds appeared first in Swann bundle construction \cite{Swann}. 
For a hyper-K\"ahler manifold $(M, I,J,K,g)$ with fundamental forms $\omega_i, \omega_J,\omega_K$ it was shown there that the existence of a common local potential function $\mu$ such that $\omega_I=dd^I\mu, \omega_J=dd^J\mu, \omega_K = dd^K\mu$, leads to a Killing vector field $V_{\mu}$ dual to $d\mu$ and $\mu = \frac{1}{2}||V||^2$, such that $IV_{\mu}, JV_{\mu}, KV_{\mu}$ rotate the complex structures and satisfy conditions 1) and 2) of Corollary \ref{cor:properties} above. In particular the foliation in \cite{Swann} is a fibration over a quaternionic-K\"ahler space. 

One advantage of the HKT geometry is the general existence of a local HKT potential. This is a local function $\mu$ on a hyper-Hermitian manifold such that 
\[
\omega_I = (dd^I+d^Jd^K)\mu, \omega_J = (dd^J+d^Kd^I)\mu, \omega_K = (dd^K+d^Id^J)\mu
\]
It is easy to see that the metric defined by such function is HKT and its torsion is expressed as $H=d^Id^Jd^K\mu$. In particular the strong HKT condition is $dd^Id^Jd^K\mu = 0$. Equivalently a potential function is defined as $\Omega = \omega_J+i\omega_K = \partial_I J\overline{\partial_I}\mu$ 
and two different HKT potentials differ by a quaternionic pluriharmonic function $p$, 
 i.e. $\partial_I J\overline{\partial_I}p=0$.   A local HKT potential is also called quaternionic plurisubharmonic function. More details  can be found in \cite{Gr-Poon}, \cite{Ve} and the general local existence is proved in \cite{Ba-Swann}. 
 
The Swann bundle construction was extended in \cite{Poon-Swann} to the HKT case  where the HKT potential of a manifold with $D(2,1,\alpha)$-symmetry is given by the norm of a special vector field $V$ generating the symmetry. Such vector field is called the Euler vector field. It again gives rise of a local $\mathbb{H}^*$ action by the quaternionic span of $V$. 

As mentioned above, for the Swann bundle over hyper-K\"ahler manifold, the K\"ahler potential for all forms is determined by the norm of the Euler vector field, 
while in our case, the norm of $V$ is constant and the relation to the potential is not that direct. But by Theorem \ref{thm:main}, we also conclude that, up to constant multiples, the vector field $IV$ is rotational \cite{CH2}. Rotational vector fields appear in the HK/QK correspondence of A. Haydys \cite{H} which is extended to  hypercomplex/quaternionic correspondence in \cite{CH2}.  To a hypercomplex manifold equipped with a rotational vector field and additional data the correspondence associates a {\it conical} quaternionic manifold of the same dimension. In particular the manifolds in Theorem \ref{thm:main} are conical, following the terminology of \cite{CH2}. 

Although the relation with the HKT potential is not as straightforward as in \cite{Poon-Swann}, the  statement $3.$ in Theorem  \ref{thm:main}  provides a relation to the notion of potential 1-form (see \cite{OPS}). Let $\Omega = \omega_J+i\omega_K$. From the equations above we have $\cal{L}_{IV}\Omega= i\Omega$, and, for the holomorphic vector field $V^{1,0}=V-iIV$, we get by linearity $\cal{L}_{V^{1,0}}\Omega = \Omega$.  Using the Cartan formula for the $(2,0)$-parts of $\cal{L}_{V^{1,0}}\Omega = \Omega$, we have $\partial \iota_{V^{1,0}}\Omega = \Omega$ and, analogously, the Cartan formula for the $(1,1)$-part gives $ \overline{\partial}_I\iota_{V^{1,0}} \Omega+ \iota_{V^{1,0}} \overline{\partial}_I \Omega=0$.\\
From $\iota_{V^{1,0}}\Omega = 2(JV^{\flat})^{1,0}$ we get that 
$$\omega_J = d(JV^{\flat})-I d(JV^{\flat}),$$ which  follows also from the formulas above. This is also the definition of $V^{\flat}$ being a {\it potential 1-form} in \cite[Definition 4]{OPS}.  However, $V^{\flat}$ is not closed. We further observe that $\Omega$ is $\partial_I$-exact,   as $\Omega=\partial_I \iota_{V^{1,0}}\Omega=\partial_I (2(JV^{\flat})^{1,0})$. We point out that when the canonical bundle is trivial, the HKT form $\Omega$ cannot be exact \cite{Ve}. \par
On the other hand, for a local potential function $\mu$, determined up to a quaternionic plurisubharmonic function, we have $$\Omega=\partial_I J\overline{\partial_I} \mu.$$ From here it follows that $\partial_I(J\overline{\partial_I}\mu - (JV^{\flat})^{1,0}_I) = 0$
and for some local function $p$, $$(V^{\flat})^{0,1}_I = J\partial_I p+\overline{\partial_I}\mu.$$ \\
We also point out that the $(1,0)$ form $(V^\flat)^{1,0}_I$ given by $V^\flat+ i IV^\flat$ is $\partial_I$-closed. In fact, $d(V^\flat+ i IV^\flat)$ is of type $(1,1)$. Analogously, the $(0,1)$ form $(V^\flat)^{0,1}_I$ given by $V^\flat- i IV^\flat$ is $\overline{\partial_I}$-closed, so, in particular, $\overline{\partial}J\partial p=0$ is a quaternionic plurisubharmonic function.
Comparing the real parts in $(V^{\flat})^{0,1}_I = J\partial_I p+\overline{\partial_I}\mu$ the observations lead to the following:

\begin{proposition} \label{Prop-potential}
For a compact simply connected 8-dimensional strong HKT manifold $M$ with a vector field $V$ as above and a local HKT potential function $\mu$ there exists a local quaternionic pluriharmonic function $p$, such that  \begin{equation} \label{formula-potential} V^{\flat} = Jdp+d\mu.\end{equation}

\end{proposition}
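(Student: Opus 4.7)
The plan is to follow the analytical chain sketched in the paragraphs preceding the proposition: use the rotational symmetry of the $(1,0)$-vector field $V^{1,0}=V-iIV$ provided by Theorem \ref{thm:main} to exhibit $\Omega=\omega_J+i\omega_K$ as a $\partial_I$-exact form, and then match this against the expression coming from the local HKT potential $\mu$.

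First I would verify that $\mathcal{L}_{V^{1,0}}\Omega=\Omega$. This follows from $\mathcal{L}_V\Omega=0$ (a consequence of Remark \ref{rmk:useful} and Proposition \ref{prop:V}) together with $\mathcal{L}_{IV}\Omega=i\Omega$; the latter is a repackaging of $\mathcal{L}_{IV}\omega_J=-\omega_K$ from statement $3$ of Theorem \ref{thm:main}, together with $\mathcal{L}_{IV}\omega_K=\omega_J$, which follows by the same cyclic argument or directly from the bracket relations established in Theorem \ref{prop:new}. Cartan's formula, combined with the HKT condition $\partial_I\Omega=0$ and the observation that $\iota_{V^{1,0}}d\Omega$ has no $(2,0)_I$-component since $V^{1,0}$ is of type $(1,0)$, then isolates $\partial_I\iota_{V^{1,0}}\Omega=\Omega$. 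A direct quaternionic computation using $\iota_V\omega_J=(JV)^\flat$, $\iota_{IV}\omega_J=-(KV)^\flat$, and their cyclic analogues identifies $\iota_{V^{1,0}}\Omega$ as a nonzero scalar multiple of $(JV^\flat)^{1,0}_I$. Matching this against $\Omega=\partial_I J\overline{\partial}_I\mu$ forces the $(1,0)_I$-form $J\overline{\partial}_I\mu-(JV^\flat)^{1,0}_I$ to be $\partial_I$-closed, and the smooth $\partial_I$-Poincar\'e lemma on a polydisk produces a local complex-valued function $p$ with $\partial_I p=J\overline{\partial}_I\mu-(JV^\flat)^{1,0}_I$. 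Applying $J$ and using $J^2=-\mathrm{id}$ then yields $(V^\flat)^{0,1}_I=\overline{\partial}_I\mu+J\partial_I p$, which is precisely the identity displayed in the paragraph preceding the proposition.

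The decisive step --- and the one I expect to be the main obstacle --- is the reality of $p$, since only then does adding the identity above to its complex conjugate collapse the right-hand side to $d\mu+Jdp$. The $\partial_I$-primitive $p$ is determined only up to an $I$-anti-holomorphic function, so the argument amounts to showing that $\mathrm{Im}(p)$ is itself $I$-pluriharmonic, in which case this residual freedom can be used to kill it. This pluriharmonicity should follow from the strong HKT condition $dH=0$ by computing $\partial_I\overline{\partial}_I p=-\overline{\partial}_I(J\overline{\partial}_I\mu-(JV^\flat)^{1,0}_I)$ and verifying that its imaginary part vanishes. Once $p$ is chosen real, summing yields $V^\flat=d\mu+Jdp$, and the quaternionic pluriharmonicity $\partial_I J\overline{\partial}_I p=0$ follows from the general fact recalled at the opening of Section \ref{Potential}: since $V^\flat=Jdp+d\mu$, the functions $\mu$ and $\mu+c\,p$ (for an appropriate real scaling constant $c$) serve as two HKT potentials for the same HKT form $\Omega$, and their difference must therefore be quaternionic pluriharmonic.
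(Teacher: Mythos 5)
Your main chain --- establishing $\mathcal{L}_{V^{1,0}}\Omega=\Omega$, extracting $\partial_I\iota_{V^{1,0}}\Omega=\Omega$ from the $(2,0)$-part of Cartan's formula, identifying $\iota_{V^{1,0}}\Omega$ with $2(JV^\flat)^{1,0}_I$, comparing against $\Omega=\partial_I J\overline{\partial}_I\mu$ and invoking the local $\partial_I$-Poincar\'e lemma --- is exactly the route the paper takes, and you are right that the reality of $p$ is a point the paper passes over with the phrase ``comparing the real parts''. The genuine gap is in your closing step. To invoke the fact that two HKT potentials of the same form differ by a quaternionic pluriharmonic function, you must first know that $\mu+c\,p$ \emph{is} an HKT potential for $\Omega$, i.e.\ that $\partial_I J\overline{\partial}_I(\mu+cp)=\Omega=\partial_I J\overline{\partial}_I\mu$ --- which is precisely the identity $\partial_I J\overline{\partial}_I p=0$ you are trying to prove. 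The relation $V^\flat=Jdp+d\mu$ does not by itself make $\mu+cp$ a potential: substituting it back into $\Omega=\partial_I\bigl(2(JV^\flat)^{1,0}_I\bigr)$ only reproduces $\Omega=2\,\partial_I J\overline{\partial}_I\mu$, because the $\partial_I p$ contribution is annihilated by $\partial_I^2=0$, so no condition on $p$ is extracted. The argument is circular.

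The non-circular route, which is the one the paper uses, is available one line earlier: by Remark \ref{rmk:useful} both $dV^\flat$ and $d(IV^\flat)$ are of type $(1,1)$ with respect to $I$, so the $(0,1)_I$-form $(V^\flat)^{0,1}_I$ is $\overline{\partial}_I$-closed; applying $\overline{\partial}_I$ to $(V^\flat)^{0,1}_I=J\partial_I p+\overline{\partial}_I\mu$ then gives $\overline{\partial}_I J\partial_I p=0$ directly, which for real $p$ is the conjugate of the quaternionic pluriharmonicity $\partial_I J\overline{\partial}_I p=0$. Note also that this is a $(0,2)$-equation, distinct from the ordinary $I$-pluriharmonicity $\partial_I\overline{\partial}_I(\operatorname{Im}p)=0$ that your reality adjustment requires, so the two verifications do not substitute for one another; as written you only assert that the latter ``should follow from $dH=0$'' without carrying out the computation, so that step too remains open (though the paper is no more explicit on this point).
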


\begin{remark} We note that the local potential $\mu$  is determined up to a local quaternionic pluriharmonic  function.  In fact,  Proposition \ref{Prop-potential}  reveals that a  strong HKT  structure  in dimension $8$  depends  locally on two  functions $p$ and $\mu$,  with  $p$  determined by  $\mu$ up to a constant. We can see for example that from $$\omega_I = V^{\flat}\wedge IV^{\flat} + JV^{\flat}\wedge KV^{\flat} + \omega_I^T = (dd^I +d^Jd^K) \mu,$$ using    \eqref{formula-potential},  we  get  the following expression of $\omega_I^T$ in terms of $\mu$ and $p$
\begin{align*}
\omega_I^T  & = (dd^I +d^Jd^K) \mu - d \mu \wedge I d \mu - J d \mu \wedge K d \mu - d p \wedge I dp - J dp \wedge K dp\\[2pt]
& + d p \wedge K d \mu - d \mu \wedge K dp - J dp \wedge I d \mu + J d \mu \wedge I dp
\end{align*}
 and similar expressions are valid for $\omega_J^T$ and $\omega_K^T$. Similarly, from $$d(IV^{\flat})= JV^{\flat}\wedge KV^{\flat}+\frac 12 \omega_I^T+\eta_I,$$ we have an expression for $\eta_I$ in terms of $p$ and $\mu$, and correspondingly for $\eta_J$ and $\eta_K$.
\end{remark}

\appendix
\section{Curvature components}
In this Appendix, we compute explicitly some components of the Bismut and the Levi-Civita curvatures. 
\begin{lemma} \label{lemma:appendix}
Let $(M^8,g,I,J,K)$ be a compact simply connected strong HKT manifold which is not hyper-K\"ahler. Let $\cal{F}$ be the distribution generated by $V,IV,JV,KV$ and let $\cal{F}^\perp$ the orthogonal complement of $\cal{F}$ with respect to $g$. Then fixed any local horizontal orthonormal frame $\{e_1,\dots,e_4\}$ adapted to $I,J,K$ and any $L \in \{I,J,K\}$ we get
\begin{enumerate}[label=\arabic*.]
\item $R^{B}(V,LV,e_i,e_j)=dV^\flat((\iota_{e_{i}}\eta_L)^\sharp,e_j)+dV^\flat(e_i,(\iota_{e_{j}}\eta_L)^\sharp)$ 
\item $R^{B}(V,e_i,e_j,e_k)=  - (\nabla^{LC}_{e_i} dV^\flat) (e_j,e_k)$
\item $R^{B}(LV,e_i,e_j,e_k)=  - (\nabla^{LC}_{e_i} \eta_L) (e_j,e_k)$
\item $R^{LC}(V,LV,e_i,e_j)=\frac{1}{4} R^{B}(V,LV,e_i,e_j)$
\item $R^{LC}(V,e_i,e_j,e_k)=  \frac{1}{2} R^{B}(V,e_i,e_j,e_k)$
\item $R^{LC}(LV,e_i,e_j,e_k)=  \frac{1}{2} R^{B}(LV,e_i,e_j,e_k)$
\item $R^{LC}(V,LV,U,e_i)=  0$, for any $U$ vertical vector field.
\end{enumerate}
\end{lemma}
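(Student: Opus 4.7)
The plan is to exploit two structural facts repeatedly. First, each of $V, IV, JV, KV$ is Bismut-parallel and Killing, so combining $\nabla^B_V Y = \nabla^{LC}_V Y + \frac{1}{2} H(V, Y, \cdot)^\sharp$ with $\nabla^{LC}_V Y = [V, Y] + \nabla^{LC}_Y V$ and the Killing identity $\nabla^{LC}_Y V = \frac{1}{2}(\iota_Y dV^\flat)^\sharp$ yields
\[
\nabla^B_V Y = [V, Y] + (\iota_Y dV^\flat)^\sharp, \qquad Y \in \Gamma(TM),
\]
together with its analogue $\nabla^B_{LV} Y = [LV, Y] + (\iota_Y dLV^\flat)^\sharp$, where $dLV^\flat = JV^\flat \wedge KV^\flat + \beta_L$ reduces to $\beta_L$ on horizontal inputs. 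Second, $[V, LV] = 0$ and both flows preserve the hypercomplex-Hermitian structure (and hence $H$), so $\mathcal{L}_V \nabla^B = \mathcal{L}_{LV} \nabla^B = 0$; moreover $\nabla^B$ respects the splitting $TM = \mathcal{F} \oplus \mathcal{F}^\perp$ since $\mathcal{F}$ is spanned by $\nabla^B$-parallel sections.

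For items 1--3 I would expand the commutator definition of $R^B$. Using $[V, LV] = 0$, applying the formula above twice, and invoking $\mathcal{L}_V \nabla^B = 0$ together with the Jacobi identity to kill the iterated-commutator contribution $[V, [LV, e_i]] - [LV, [V, e_i]]$, one obtains
\[
R^B(V, LV) e_i = \bigl(\iota_{(\iota_{e_i} \beta_L)^\sharp} dV^\flat\bigr)^\sharp - \bigl(\iota_{(\iota_{e_i} dV^\flat)^\sharp} \beta_L\bigr)^\sharp.
\]
Splitting $\beta_L = \frac{1}{2}\omega_L^T + \eta_L$ and using that $dV^\flat$ is anti-self-dual while $\omega_L^T$ is self-dual on the four-dimensional horizontal distribution (equivalently, $dV^\flat$ is of type $(1,1)$ with respect to $L$), the $\omega_L^T$ contributions cancel pairwise and the remaining $\eta_L$ terms assemble into the claim of item 1. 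The analogous expansion with $LV$ replaced by a horizontal vector $X$ produces the cleaner identity $R^B(V, X, Y, Z) = -(\nabla^B_X dV^\flat)(Y, Z)$, and similarly $R^B(LV, X, Y, Z) = -(\nabla^B_X dLV^\flat)(Y, Z)$. On horizontal triples, $\nabla^B$ and $\nabla^{LC}$ agree when tested against the horizontal forms $dV^\flat$ and $\eta_L$ (the correction $\frac{1}{2}H(X, Y, \cdot)^\sharp$ is purely vertical for horizontal $X, Y$); moreover $\nabla^B \omega_L^T = 0$ and $\nabla^B(JV^\flat \wedge KV^\flat)$ vanishes on horizontal inputs, which reduces $\nabla^B dLV^\flat$ to $\nabla^B \eta_L$ and yields items 2 and 3.

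For items 4--7 the natural tool is the Ricci identity for $\nabla^{LC}$ applied to the Killing 1-forms $V^\flat$ and $LV^\flat$. Combined with $\nabla^{LC}_X V^\flat = \frac{1}{2}\iota_X dV^\flat$, a direct expansion gives
\[
R^{LC}(X, Y, Z, V) = \frac{1}{2}\bigl[(\nabla^{LC}_Y dV^\flat)(X, Z) - (\nabla^{LC}_X dV^\flat)(Y, Z)\bigr],
\]
and analogously for $LV^\flat$. Pair-swap symmetry of $R^{LC}$ together with the identity $d^2 V^\flat = 0$ rewrites this as $R^{LC}(V, e_i, e_j, e_k) = -\frac{1}{2}(\nabla^{LC}_{e_i} dV^\flat)(e_j, e_k)$, which combined with item 2 yields item 5; item 6 follows identically. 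For item 4, applying the same identity with $Z = LV$ and using pair-swap expresses $R^{LC}(V, LV, e_j, e_k)$ via the quantities $(\nabla^{LC}_{e_i} dV^\flat)(e_l, LV)$, which reduce through $\nabla^{LC}_{e_i} LV = \frac{1}{2}(\iota_{e_i} \beta_L)^\sharp$ (horizontal) to exactly $\frac{1}{4}$ of the formula of item 1, after the same $\omega_L^T$/$\eta_L$ cancellation. Finally, item 7 is obtained by applying the Ricci identity with one vertical slot among $X, Y, Z$: since $\iota_V dV^\flat = \iota_{LV} dV^\flat = 0$ and $\nabla^{LC}_V V = \nabla^{LC}_V LV = \nabla^{LC}_{LV} LV = 0$, while $\nabla^{LC}_{e_i} V$ and $\nabla^{LC}_{e_i} LV$ are horizontal, every surviving term vanishes.

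The main obstacle is bookkeeping rather than conceptual: one must carefully track the horizontal versus vertical decomposition of every covariant derivative and verify that the self-dual/anti-self-dual cancellations in item 1 (and its Levi-Civita counterpart in item 4) go through on the nose. These cancellations hinge on the four-dimensionality of $\mathcal{F}^\perp$ and on the $(1,1)$-invariance of $dV^\flat$ with respect to each complex structure. Once they are in place, matching the numerical factors $\frac{1}{2}$ and $\frac{1}{4}$ in items 4--6 reduces to a direct comparison of the Ricci-identity output with the Bismut formulas of items 1--3.
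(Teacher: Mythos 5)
Your proposal is correct, but it reaches items 4--7 by a genuinely different route than the paper. For items 1--3 you re-derive from first principles (via $\nabla^B_V = \mathcal{L}_V + (\iota_\bullet dV^\flat)^\sharp$, the vanishing of $[V,LV]$, and $\mathcal{L}_V\nabla^B=\mathcal{L}_{LV}\nabla^B=0$) the identity $R^B(V,X,Y,Z)=-(\nabla^B_X dV^\flat)(Y,Z)$ and its analogue for $LV$, which the paper simply imports from \cite[Lemma 2.13]{ALS}; after that, the reduction $\beta_L\mapsto\eta_L$ via the commutation of $dV^\flat$ with $\omega_L^T$ (equivalently, $L$-invariance of $dV^\flat$) is exactly the paper's computation, so this part is the same argument with the key lemma proved rather than cited. (One small point to make explicit: $\mathcal{L}_{LV}\nabla^B=0$ is not automatic from $LV$ being Killing alone --- you need $\mathcal{L}_{LV}H=0$, which follows from $\mathcal{L}_{LV}L=0$ and $\mathcal{L}_{LV}\omega_L=0$ in Remark \ref{rmk:useful}.) For items 4--7 the paper substitutes the explicit expression \eqref{eqn:tor2} of $H$ into the Ivanov--Papadopoulos comparison formula \eqref{eqn:ivanovpap} and grinds through each correction term, whereas you invoke the Kostant/Ricci identity for the Killing one-forms $V^\flat$ and $LV^\flat$, namely $R^{LC}(V,X,Y,Z)=-\tfrac12(\nabla^{LC}_X dV^\flat)(Y,Z)$, and then compare $\nabla^{LC}$ with $\nabla^B$ on horizontal slots. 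Your route is shorter and more conceptual --- the factors $\tfrac12$ and $\tfrac14$ drop out of the Killing identity and the single comparison $\nabla^{LC}_{LV}-\nabla^B_{LV}=-\tfrac12(\iota_\bullet\beta_L)^\sharp$ on horizontal forms, rather than from six torsion contractions --- while the paper's route has the advantage of requiring no structural input beyond the already-established formula for $H$. The numerical checks go through: e.g.\ $R^{LC}(V,LV,e_i,e_j)=-\tfrac12(\nabla^{LC}_{LV}dV^\flat)(e_i,e_j)=-\tfrac12\bigl(-\tfrac12 R^B(V,LV,e_i,e_j)\bigr)$ recovers the factor $\tfrac14$ of item 4, and item 7 follows since $\nabla^{LC}_{LV}U$ is vertical for vertical $U$ while $dV^\flat$ is horizontal.
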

\begin{proof}
Since for any $L=I,J,K$ the Hermitian structure $(g,L)$ is BHE, by \cite[Lemma 2.13]{ALS} we have that for any $X,Y,Z \in \ker(V^\flat) \cap \ker( LV^\flat)$ 
\begin{equation}\label{eqn:curvature}
R^{B}(V,X,Y,Z)=-\nabla^{B}_X dV^\flat (Y,Z), \ \  R^{B}(LV,X,Y,Z)=-\nabla^{B}_X dLV^\flat (Y,Z)=-\nabla^{B}_X \eta_L (Y,Z).
\end{equation}
\begin{enumerate}[label=\arabic*.]
\item We prove the first statement for $L=I$, since the others follow analogously. If we take $X=IV,$ and $Y,Z$ horizontal vector fields, we have that $X,Y,Z$ lie for instance in $ \ker(V^\flat) \cap \ker( JV^\flat)$, so we may apply the first identity in \eqref{eqn:curvature}.
\[
\begin{split}
R^{B}(V,IV,Y,Z)=&- \nabla^B_{IV} dV^\flat (Y,Z)\\
=& dV^\flat((\iota_Y dIV^\flat)^\sharp, Z)+dV^\flat(Y,(\iota_Z dIV^\flat)^\sharp)\\
=&dV^\flat((\iota_Y \beta_I)^\sharp, Z)+dV^\flat(Y,(\iota_Z \beta_I)^\sharp)\\
=&dV^\flat((\iota_Y \eta_I)^\sharp, Z)+dV^\flat(Y,(\iota_Z \eta_I)^\sharp),
\end{split}
\]
where we used that $dV^\flat((\iota_Y \omega^T_I)^\sharp, Z)+dV^\flat(Y,(\iota_Z \omega_I^T)^\sharp)=0$ (observe that we already did this computation in details in the proof of Theorem \ref{thm:eta}). Therefore for $Y=e_i$ and $Z=e_j$ we get that
\[
R^{B}(V,IV,e_i,e_j)=dV^\flat((\iota_{e_{i}}\eta_I)^\sharp,e_j)+dV^\flat(e_i,(\iota_{e_{j}}\eta_I)^\sharp).
\]
\item Let $X,Y,Z$ be horizontal vector fields, which ie in $\ker(V^\flat) \cap \ker( LV^\flat)$, for any $L$. Then, by the first identity in \eqref{eqn:curvature}, $$R^{B}(V,X,Y,Z)=-\nabla^{B}_X dV^\flat (Y,Z)=-\nabla^{LC}_X dV^\flat (Y,Z),$$ as $H(X,Y,Z)=0$. By choosing $X=e_i$, $Y=e_j$ and $Z=e_k$ the statement follows. 
\item As in the proof of the first statement we deal with the case $L=I$, as the other computations are analogues. As before, let $X,Y,Z$ be horizontal vector fields, which ie in $\ker(V^\flat) \cap \ker( IV^\flat)$. Then by the second identity of \eqref{eqn:curvature} we have that $$R^{B}(IV,X,Y,Z)=-\nabla^{B}_X dIV^\flat (Y,Z)=-\nabla^{B}_X \eta_I (Y,Z) =-\nabla^{LC}_X \eta_I (Y,Z).$$ Also in this case we chose $X=e_i$, $Y=e_j$ and $Z=e_k$. 
\end{enumerate}
It only remains to verify the identities for the Levi-Civita curvature. We will use the \cite[Formula 3.19]{IP} which relates the Levi-Civita curvature with the Bismut one. In particular, for any  vector fields $X,Y,Z,U$ we have that
\begin{equation} \label{eqn:ivanovpap}
\begin{split}
R^{LC} (X,Y,Z,U)=& R^B (X,Y,Z,U) -\frac{1}{2} \nabla^B_X H (Y,Z,U) \\
+&\frac{1}{2} \nabla^B_Y H (X,Z,U) -\frac{1}{2} g(H(X,Y),H(Z,U))\\
-&\frac{1}{4} g(H(Y,Z),H(X,U))+\frac{1}{4} g(H(X,Z),H(Y,U)),
\end{split}
\end{equation}
where the expression of the torsion $H$ is given in \eqref{eqn:tor2}. 
\begin{enumerate}[label=\arabic*.] \addtocounter{enumi}{3}
\item Also in this case we deal with $L=I$. Applying \eqref{eqn:ivanovpap}:
\[
\begin{split}
\ \ \ \ \ R^{LC} (V,IV,e_i,e_j)=& R^B (V,IV,e_i,e_j) -\frac{1}{2} \nabla^B_{V} H (IV,e_i,e_j) +\frac{1}{2} \nabla^B_{IV} H (V,e_i,e_j)\\
-&\frac{1}{4} g(H(IV,e_i),H(V,e_j))+\frac{1}{4} g(H(V,e_i),H(IV,e_j)) \\
=&dV^\flat((\iota_{e_{i}}\eta_I)^\sharp,e_j)+dV^\flat(e_i,(\iota_{e_{j}}\eta_I)^\sharp) -\frac{1}{2} \nabla^B_{V} \eta_I(e_i,e_j) \\
+&\frac{1}{2} \nabla^B_{IV} dV^\flat(e_i,e_j)-\frac{1}{4} g((\iota_{e_{i}} \beta_I)^\sharp,(\iota_{e_{j}} dV^\flat)^\sharp)+\frac{1}{4} g((\iota_{e_{j}} \beta_I)^\sharp,(\iota_{e_{i}} dV^\flat)^\sharp) \\
=& dV^\flat((\iota_{e_{i}}\eta_I)^\sharp,e_j)+dV^\flat(e_i,(\iota_{e_{j}}\eta_I)^\sharp)- \frac{1}{2} dV^\flat((\iota_{e_{i}} \eta_I)^\sharp, e_j)\\
-&\frac{1}{2} dV^\flat(e_i,(\iota_{e_{j}} \eta_I)^\sharp)+\frac{1}{2} \eta_I((\iota_{e_{i}} dV^\flat)^\sharp, e_j)+\frac{1}{2}\eta_I(e_i,(\iota_{e_{j}} dV^\flat)^\sharp)\\
-&\frac{1}{4} g((\iota_{e_{i}} \eta_I)^\sharp,(\iota_{e_{j}} dV^\flat)^\sharp)+\frac{1}{4} g((\iota_{e_{j}} \eta_I)^\sharp,(\iota_{e_{i}} dV^\flat)^\sharp) \\
=& \frac{1}{4} (dV^\flat)_{ik} (\eta_I)_{jk} - \frac{1}{4} (dV^\flat)_{jk} (\eta_I)_{ik}\\
=& \frac{1}{4} R^B(V,IV,e_i,e_j). 
\end{split}
\] 
\item Exploiting the formula \eqref{eqn:ivanovpap},
\[
\begin{split}
\ \ \ \ \ R^{LC} (V,e_i,e_j,e_k)=& R^B (V,e_i,e_j,e_k) -\frac{1}{2} \nabla^B_{V} H (e_i,e_j,e_k) \\
+&\frac{1}{2} \nabla^B_{e_i} H (V,e_j,e_k)-\frac{1}{2} g(H(V,e_i),H(e_j,e_k))\\
-&\frac{1}{4} g(H(e_i,e_j),H(V,e_k))+\frac{1}{4} g(H(V,e_j),H(e_i,e_k)) \\
=&  - (\nabla^{LC}_{e_i} dV^\flat) (e_j,e_k) +\frac{1}{2} (\nabla^B_{e_i} dV^\flat)(e_j,e_k)\\
=&- \frac{1}{2} (\nabla^{LC}_{e_i} dV^\flat)(e_j,e_k)\\
=&  \frac{1}{2}R^B (V,e_i,e_j,e_k).
\end{split}
\] 
\item As in the previous cases we work with $L=I$ since the other cases follows similarly. By applying \eqref{eqn:ivanovpap},
\[
\begin{split}
\ \ \ \ \ R^{LC} (IV,e_i,e_j,e_k)=& R^B (IV,e_i,e_j,e_k) -\frac{1}{2} \nabla^B_{IV} H (e_i,e_j,e_k) \\
+&\frac{1}{2} \nabla^B_{e_i} H (IV,e_j,e_k)-\frac{1}{2} g(H(IV,e_i),H(e_j,e_k))\\
-&\frac{1}{4} g(H(e_i,e_j),H(IV,e_k))+\frac{1}{4} g(H(IV,e_j),H(e_i,e_k)) \\
=&  - (\nabla^{LC}_{e_i} \eta_I) (e_j,e_k) +\frac{1}{2} (\nabla^B_{e_i} \eta_I)(e_j,e_k)\\
=&- \frac{1}{2} (\nabla^{LC}_{e_i} \eta_I)(e_j,e_k)\\
=&  \frac{1}{2}R^B (IV,e_i,e_j,e_k).
\end{split}
\] 
\item We set again $L=I$. Without loss of generality, we may assume that $U \in \{V,IV,JV,KV\}$. In such a way, $\nabla^{B} U=0$, \ $R^{B} (V,IV,U,X)=0$ and $[U,V]=0$. Using \eqref{eqn:ivanovpap},
\[
\begin{split}
\ \ \ \ \ R^{LC} (V,IV,U,e_j)=& -\frac{1}{2} \nabla^B_{V} H (IV,U,e_j) +\frac{1}{2} \nabla^B_{IV} H (V,U,e_j)-\frac{1}{4} g(H(IV,U),H(V,e_j)) \\
=& -\frac{1}{2} \nabla^B_{V} \eta_I (U,e_j) +\frac{1}{2} \nabla^B_{IV} dV^\flat (U,e_j). \\
=& 0. 
\end{split}
\] 
\end{enumerate}
\end{proof}

\end{document}